\documentclass[11pt,a4paper]{article}
\usepackage[T1]{fontenc}
\usepackage[utf8]{inputenc}
\usepackage{lmodern,microtype}
\usepackage[margin=28mm]{geometry}
\usepackage{amsmath,amssymb,amsthm,mathtools}
\usepackage{enumitem}
\usepackage{indentfirst}
\usepackage{xcolor}
\usepackage{tikz}
\usepackage[colorlinks=true,linkcolor=blue,citecolor=blue,urlcolor=blue]{hyperref}
\usepackage[noabbrev,capitalise]{cleveref}
\hypersetup{pdftitle={Unique Positive Extremals and Sharp Stability Constants for Singular Moser-Trudinger Inequalities},
            pdfauthor={Kaiwen Guo and Yanjun Liu}}
\numberwithin{equation}{section}
\newtheorem{theorem}{Theorem}[section]
\newtheorem{proposition}[theorem]{Proposition}
\newtheorem{lemma}[theorem]{Lemma}
\newtheorem{corollary}[theorem]{Corollary}
\theoremstyle{definition}

\theoremstyle{remark}
\newtheorem{remark}[theorem]{Remark}
\makeatletter
\def\cv@theoremstep[#1]#2{%
  \let\cv@savedstep\cref@old@refstepcounter
  \let\cref@old@refstepcounter\Hy@theorem@refstepcounter
  \refstepcounter[#1]{#2}%
  \let\cref@old@refstepcounter\cv@savedstep}
\AtBeginDocument{%
  \@ifundefined{Hy@theorem@refstepcounter}{}{%
    \patchcmd{\cref@thmoptarg}
      {\refstepcounter}{\cv@theoremstep}{}%
      {\PackageWarning{manuscript}{Theorem hyperlink compatibility patch not applied}}%
    \patchcmd{\cref@thmnoarg}
      {\refstepcounter}{\Hy@theorem@refstepcounter}{}%
      {\PackageWarning{manuscript}{Numbered theorem hyperlink patch not applied}}}}
\makeatother
\newcommand{\R}{\mathbb R}
\newcommand{\M}{\mathcal M}
\newcommand{\ip}[2]{\langle #1,#2\rangle}
\newcommand{\norm}[1]{\lVert #1\rVert}
\DeclareMathOperator{\dist}{dist}
\DeclareMathOperator{\spanop}{span}
\DeclareMathOperator{\Id}{Id}
\setlist[enumerate]{leftmargin=*,itemsep=3pt,topsep=5pt}
\title{Unique Positive Extremals and Sharp Stability Constants\newline
for Singular Moser-Trudinger Inequalities}
\author{Kaiwen Guo\textsuperscript{a}, Yanjun Liu\textsuperscript{b,*}\\[1ex]
\small \textsuperscript{a}Mathematical Research Center, Shandong University,\\
\small Jinan, 250100, P.~R.~China\\
\small \textsuperscript{b}School of Mathematical Sciences, Chongqing Normal University,\\
\small Chongqing, 401331, P.~R.~China}
\date{}

\begin{document}
\maketitle
\begingroup
\renewcommand{\thefootnote}{\fnsymbol{footnote}}
\footnotetext[0]{\raggedright Email: \texttt{kwguo@mail.nankai.edu.cn} (Kaiwen Guo).}
\footnotetext[1]{\raggedright Corresponding author: \texttt{liuyj@mail.nankai.edu.cn} (Yanjun Liu).}
\endgroup
\vspace{-1em}
\begin{abstract}
We combine spectral perturbation with orthogonal transformations in
$H_0^1(\Omega)$ to study singular Moser-Trudinger inequalities under
the energy constraint $\int_\Omega|\nabla u|^2\,dx\le1$, where $\Omega$
is a smooth bounded planar domain containing the origin. For the weight $|x|^{-2\beta}$,
$0\le\beta<1$, the positive maximizer is unique for small $\alpha>0$,
and the sharp global stability constant satisfies
\[
 C_\alpha=\frac{\alpha}{2}
   \left(\frac1{\mu_1}-\frac1{\mu_2}\right)+O(\alpha^2),
 \qquad \alpha\downarrow0,
\]
where $\mu_1<\mu_2$ are the first two weighted Dirichlet eigenvalues.
We compute the second asymptotic coefficient by a weighted quartic
maximization on the second eigenspace, even when the second eigenvalue
is multiple. An exact
reduction to the sphere orthogonal to the maximizer identifies the
attaining functions and yields a limiting global-to-local ratio of
$1/2$. On the disk, the energy derivative with respect to the central
value characterizes radial constrained degeneracy and yields finitely
many exceptional normalized parameters. Combined with known critical
uniqueness and nondegeneracy, this gives uniform quadratic stability
near criticality, while compactness and analytic reduction yield
finite-power stability throughout the admissible range.
\end{abstract}

\noindent\textbf{Keywords:} singular Moser-Trudinger inequality, quantitative
stability, unique maximizer, weighted eigenvalue, critical exponential growth.

\noindent\textbf{Mathematics Subject Classification (2020):} Primary 46E35,
Secondary 35J20, 49K40.

\section{Introduction and main results}\label{sec:intro}

Throughout the paper, $\Omega\subset\R^2$ is a bounded connected domain with
$C^2$ boundary, $0\in\Omega$, and $\beta\in[0,1)$ is fixed.
Dependence on $\Omega$ and $\beta$ is suppressed unless needed. Set
\[
 \ip{u}{v}=\int_\Omega\nabla u\cdot\nabla v\,dx,\qquad
 \norm{u}^2=\ip{u}{u}.
\]
All orthogonality and distances refer to this Dirichlet inner
product on $H_0^1(\Omega)$, and we consider
\begin{equation}\label{eq:problem}
 S_\alpha=\sup_{\substack{u\in H_0^1(\Omega)\\\norm{u}\le1}}
              \int_\Omega |x|^{-2\beta}e^{\alpha u^2}\,dx,
 \qquad 0<\alpha\le4\pi(1-\beta).
\end{equation}
Write $J_\alpha(u)=\int_\Omega |x|^{-2\beta}e^{\alpha u^2}\,dx$ for
the functional. Positive maximizers solve the normalized equation
\begin{equation}\label{eq:EL}
 \begin{cases}
 -\Delta u=\lambda |x|^{-2\beta} u e^{\alpha u^2}&\text{in }\Omega,\\
 u=0&\text{on }\partial\Omega,\\
 \displaystyle\int_\Omega|\nabla u|^2\,dx=1,
 \end{cases}
 \qquad
 \lambda=\left(\int_\Omega |x|^{-2\beta} u^2e^{\alpha u^2}\,dx\right)^{-1}.
\end{equation}
We bound the deficit $S_\alpha-J_\alpha(u)$ in terms of distance
to maximizers, compute the sharp quadratic constant and compare it with
the constrained second variation.

The classical Moser-Trudinger inequality \cite{Trudinger,Moser}
gives the critical exponential embedding, and Adimurthi--Sandeep
\cite{AS} established its singular counterpart with critical exponent
$4\pi(1-\beta)$. Existence of extremals was developed by
Carleson--Chang and Flucher in the unweighted planar setting
\cite{CC,Flucher}, and by Csat\'o--Roy and Yang--Zhu for singular
weights \cite{CR,YZ}, with related singular embeddings on surfaces
studied by Iula--Mancini \cite{IM}. These questions also arise for
anisotropic energies: Zhou--Zhou \cite{ZZ19} proved concentration
compactness and existence of extremals for the anisotropic Dirichlet
norm. Liu established the singular concentration--compactness
principle \cite{Liu20} and studied anisotropic inequalities with
exact growth and their maximizers \cite{Liu22}, while Lu--Shen--Xue--Zhu
\cite{LSXZ24} used weighted anisotropic isoperimetric inequalities
to establish extremals in the singular case. In this direction, Guo--Liu obtained sharp singular
inequalities in the entire space \cite{GL24}, treated $L^p$ norm
perturbations on bounded domains \cite{GL26Lp}, and studied
existence and Wulff symmetry of extremals \cite{GL26Wulff}.

Quantitative stability controls the distance to extremals through
the deficit. Following the
remainder-term problems of Brezis--Lieb \cite{BL}, Bianchi--Egnell
\cite{BE} proved this principle for the Sobolev inequality, with
later developments covering quantitative and gradient-distance estimates
\cite{CFMP,FN,Neumayer}, the sharp distance exponent
$\max\{2,p\}$ of Figalli--Zhang \cite{FZ}, and fractional Sobolev
remainders of Chen--Frank--Weth \cite{CFW}. For bounds on the
stability constants, Carlen \cite{Carlen} developed a duality
method, Dolbeault--Esteban--Figalli--Frank--Loss \cite{DEFFL}
obtained explicit Sobolev bounds with sharp dimensional dependence
and a dimension-free Gaussian log-Sobolev estimate, and
Chen--Lu--Tang \cite{CLT24,CLT25} established explicit bounds for
Hardy--Littlewood--Sobolev and fractional Sobolev stability, as well
as global conformal log-Sobolev stability on the sphere.

K\"onig \cite{Konig23,Konig25} showed that the sharp global
Bianchi--Egnell stability constant is strictly smaller than the local
constant determined by the second variation and is attained away
from the extremal manifold.
Related attainment results concern Yamabe stability under two
threshold conditions \cite{AKRW26}, logarithmic Sobolev stability
\cite{DS26}, and conformal log-Sobolev stability, together with its
fractional Sobolev limit \cite{CW26}. At degenerate extremals,
compactness and analytic reduction can yield a finite-power
estimate, as in Engelstein--Neumayer--Spolaor \cite{ENS}, while sharp
fourth-order remainders occur in the Sobolev examples of Frank
\cite{Frank22} and in the Caffarelli--Kohn--Nirenberg inequality
along the Felli--Schneider curve studied by Frank--Peteranderl
\cite{FP24}.

At the exponential endpoint, Chen--Lu--Tang \cite{CLT23} studied
local stability for spherical log-Sobolev and Moser--Onofri
inequalities. For the unweighted planar Dirichlet problem,
Andrade--de~Oliveira--do~\'O--Macedo--Ratzkin \cite{Andrade}
studied quadratic gradient stability for small parameters and the
full disk parameter range, while Li--Sun \cite[Theorem~C]{LiSun}
proved critical disk uniqueness up to sign, constrained coercivity
and global quadratic stability through an exact dual variational
principle and rigorous computer-assisted estimates. Our radial
criterion in \cref{prop:shooting} treats the norm constraint
directly, as explained in \cref{rem:constrained-coercivity}.

For small parameters on general domains, we prove uniqueness of the
positive maximizer and compute the first two coefficients of the sharp
global stability constant, with the second determined by a weighted
quartic maximization on the second eigenspace, allowing a multiple
second eigenvalue. A uniform
perturbation argument gives the limiting global-to-local ratio
$1/2$ and an exact formula on the unit sphere orthogonal to the
positive maximizer, and every attaining function changes sign, has
distance $\sqrt2$ from the extremal pair, and approaches the
quartic maximizers in the second eigenspace. For a simple second
eigenvalue, we also obtain a unique attaining pair and its first
correction.

On the disk, a radial change of variables transfers critical
uniqueness and radial coercivity from \cite{LiSun}, while a
separate estimate controls nonradial perturbations. We characterize
radial constrained degeneracy by the vanishing energy derivative
with respect to the central value, obtaining finitely many
exceptional normalized parameters and, by continuation of the
critical maximizer, uniqueness and uniform quadratic stability near
criticality. At fixed $\alpha/(1-\beta)$, the rescaled sharp constant
is nondecreasing in $\beta$, with uniform order
$\alpha/(1-\beta)^2$ for small normalized exponents and near the
critical endpoint. Concentration gaps and analytic reduction also
give finite-power stability at criticality on smooth general
domains.

Define
\begin{equation}\label{eq:distance}
 \M_\alpha=\{v\in H_0^1(\Omega):\norm{v}\le1,\ \int_\Omega |x|^{-2\beta}e^{\alpha v^2}\,dx=S_\alpha\}.
\end{equation}
Every maximizer has norm one, and $\M_\alpha=-\M_\alpha$.
We write $\dist(u,\M_\alpha)=\inf_{v\in\M_\alpha}\norm{u-v}$.
Let $\mu_1<\mu_2$ be the first two weighted Dirichlet eigenvalues.
The first is simple, and its positive normalized eigenfunction $\phi$ satisfies
\begin{equation}\label{eq:eigenproblem}
 \begin{gathered}
 -\Delta\phi=\mu_1|x|^{-2\beta}\phi,\qquad \phi\in H_0^1(\Omega),\\
 \norm{\phi}=1,\qquad \int_\Omega |x|^{-2\beta}\phi^2\,dx=\frac1{\mu_1}.
 \end{gathered}
\end{equation}
All constants below may depend on $\Omega$ and the fixed value of $\beta$.
There is a constant $\alpha_0\in(0,4\pi(1-\beta))$ such that the
following three theorems hold for every $0<\alpha<\alpha_0$.

\begin{theorem}\label{thm:unique}
For $0<\alpha<\alpha_0$,
\[
 \M_\alpha=\{u_\alpha,-u_\alpha\},\qquad u_\alpha>0\text{ in }\Omega.
\]
The pair $(u_\alpha,\lambda_\alpha)$ in \eqref{eq:EL} extends smoothly to
$\alpha=0$ with $(u_0,\lambda_0)=(\phi,\mu_1)$. More precisely,
\begin{align}
 u_\alpha&=\phi+\alpha\psi+O(\alpha^2),\label{eq:branch-exp}\\
 \lambda_\alpha&=\mu_1-\alpha\mu_1^2\int_\Omega |x|^{-2\beta}\phi^4\,dx+O(\alpha^2),\label{eq:lambda-exp}\\
 S_\alpha&=\int_\Omega |x|^{-2\beta}\,dx+\frac{\alpha}{\mu_1}
                 +\frac{\alpha^2}{2}\int_\Omega |x|^{-2\beta}\phi^4\,dx+O(\alpha^3).\label{eq:sup-exp}
\end{align}
All expansions are taken as $\alpha\downarrow0$, with the first holding in
$H_0^1(\Omega)$. The correction $\psi$ is the unique weak solution of
\begin{equation}\label{eq:psi}
 \begin{cases}
 \begin{aligned}
 (-\Delta-\mu_1|x|^{-2\beta})\psi
   &=\mu_1|x|^{-2\beta}\phi^3\\
   &\quad-\mu_1^2|x|^{-2\beta}\phi
                 \int_\Omega |x|^{-2\beta}\phi^4\,dx
 \end{aligned}
            &\text{in }\Omega,\\
 \psi=0&\text{on }\partial\Omega,\\
 \ip{\psi}{\phi}=0.
 \end{cases}
\end{equation}
\end{theorem}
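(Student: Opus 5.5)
We plan to obtain existence, uniqueness and the expansions from one implicit-function argument at $\alpha=0$, combined with a compactness step showing that every maximizer for small $\alpha$ lies on the resulting branch.

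\textbf{Step 1: the local branch.} Define
\[
 F:\R\times H_0^1(\Omega)\times\R\to H_0^1(\Omega)\times\R,\qquad
 F(\alpha,u,\lambda)=\bigl(u-\lambda(-\Delta)^{-1}[|x|^{-2\beta}ue^{\alpha u^2}],\ \norm{u}^2-1\bigr).
\]
Since $\beta<1$ and $|x|^{-2\beta}\in L^{p}$ for some $p>1$, the singular Adimurthi--Sandeep inequality and Hölder's inequality show that $u\mapsto|x|^{-2\beta}ue^{\alpha u^2}$ is real-analytic into $H^{-1}$ for $|\alpha|$ small and $u$ in a bounded set. At $(0,\phi,\mu_1)$ the linearization in $(u,\lambda)$ is
\[
 (v,\sigma)\mapsto\bigl(v-\mu_1(-\Delta)^{-1}(|x|^{-2\beta}v)-\sigma(-\Delta)^{-1}(|x|^{-2\beta}\phi),\ 2\ip{\phi}{v}\bigr).
\]
It is Fredholm of index zero. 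Its kernel is trivial: testing the first component against $\phi$ forces $\sigma=0$, simplicity of $\mu_1$ gives $v\in\spanop\phi$, and the constraint $\ip{\phi}{v}=0$ then gives $v=0$. The implicit function theorem therefore yields a smooth branch $(u_\alpha,\lambda_\alpha)$ through $(\phi,\mu_1)$, and positivity of $u_\alpha$ persists by the strong maximum principle.

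\textbf{Step 2: the expansions.} Differentiate the equation at $\alpha=0$ and write $u_\alpha=\phi+\alpha\psi$, $\lambda_\alpha=\mu_1+\alpha\lambda'$. The first-order equation is
\[
 (-\Delta-\mu_1|x|^{-2\beta})\psi=\mu_1|x|^{-2\beta}\phi^3+\lambda'|x|^{-2\beta}\phi,
\]
and the constraint gives $\ip{\psi}{\phi}=0$. Testing with $\phi$ gives the solvability condition $\lambda'=-\mu_1^2\int|x|^{-2\beta}\phi^4$, which is \eqref{eq:lambda-exp} and \eqref{eq:psi}.

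For \eqref{eq:sup-exp}, expand $J_\alpha(u_\alpha)=\int|x|^{-2\beta}+\alpha\int|x|^{-2\beta}u_\alpha^2+\frac{\alpha^2}2\int|x|^{-2\beta}u_\alpha^4+O(\alpha^3)$. Since
\[
 \int|x|^{-2\beta}\phi\psi=\mu_1^{-1}\ip{\phi}{\psi}=0,
\]
we get $\int|x|^{-2\beta}u_\alpha^2=\mu_1^{-1}+O(\alpha^2)$, which gives the stated coefficients. Equivalently, $S_\alpha$ is the maximum of a function whose first-order term is $\alpha\sup_{\norm u\le1}\int|x|^{-2\beta}u^2=\alpha/\mu_1$, and the second-order term follows from the envelope principle.

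\textbf{Step 3: every maximizer lies on the branch.} Maximizers exist for subcritical $\alpha$ by compactness. Let $v_\alpha\in\M_\alpha$ with $\alpha\to0$. Comparing $J_\alpha(v_\alpha)\ge J_\alpha(\phi)$ and expanding both sides shows $\int|x|^{-2\beta}v_\alpha^2\ge\mu_1^{-1}-O(\alpha)$, uniformly thanks to a uniform exponential integrability bound for $\alpha\le\alpha_0$. The spectral gap $\mu_2>\mu_1$ then forces $\dist(v_\alpha,\{\pm\phi\})\to0$. Since $|v_\alpha|$ is also a maximizer, it solves \eqref{eq:EL}, so the strong maximum principle makes $v_\alpha$ of one sign. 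Its multiplier tends to $\mu_1$ by testing the equation with $v_\alpha$. By local uniqueness in the implicit function theorem, $v_\alpha=\pm u_\alpha$ for $\alpha<\alpha_0$.

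\textbf{Main obstacle.} The delicate step is making Step 3 quantitative and uniform. The $O(\alpha)$ remainders require $\sup_{\norm u\le1}\int|x|^{-2\beta}u^4e^{\alpha u^2}$ to be bounded uniformly in $\alpha\le\alpha_0<4\pi(1-\beta)$; the singular inequality supplies this. The convergence in $H^{-1}$ needed to apply the implicit function theorem's uniqueness neighbourhood should follow from compactness of the weighted embedding.
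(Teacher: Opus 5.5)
Your proposal is correct and follows essentially the same route as the paper: the implicit function theorem for $(u-\lambda T_\alpha(u),\norm{u}^2-1)$ at $(0,\phi,\mu_1)$, the spectral-gap comparison with $\phi$ (the paper's \cref{lem:limit}) to place every maximizer in the neighbourhoods of $(\pm\phi,\mu_1)$, and differentiation at $\alpha=0$ tested against $\phi$ for the expansions. The differences are cosmetic. You check invertibility of the linearization via Fredholm index zero and a trivial kernel, whereas the paper uses $\Id-\mu_1T_0\ge(1-\mu_1/\mu_2)\Id$ on $\phi^\perp$ directly. Also, positivity of $u_\alpha$ is best deduced, as in the paper, from $u_\alpha$ being a one-signed maximizer close to $\phi$, rather than from the maximum principle alone in Step 1.
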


\begin{theorem}\label{thm:sharp}
For $0<\alpha<\alpha_0$, define
\begin{equation}\label{eq:bestconstant}
 C_\alpha
 =\inf_{\substack{u\in H_0^1(\Omega),\ \norm{u}\le1\\u\notin\M_\alpha}}
       \frac{S_\alpha-\int_\Omega |x|^{-2\beta}e^{\alpha u^2}\,dx}{\dist(u,\M_\alpha)^2}.
\end{equation}
Then $C_\alpha>0$ and
\begin{equation}\label{eq:best-exp}
 C_\alpha
  =\frac{\alpha}{2}\left(\frac1{\mu_1}-\frac1{\mu_2}\right)
        +O(\alpha^2).
\end{equation}
In particular, every $u\in H_0^1(\Omega)$ with $\norm{u}\le1$ satisfies
\begin{equation}\label{eq:stability}
 S_\alpha-\int_\Omega |x|^{-2\beta}e^{\alpha u^2}\,dx\ge C_\alpha
            \min\{\norm{u-u_\alpha}^2,\norm{u+u_\alpha}^2\}.
\end{equation}
Let $E_2$ be the weighted Dirichlet eigenspace corresponding to $\mu_2$.
The more precise expansion is
\begin{equation}\label{eq:second-best}
\begin{aligned}
 C_\alpha
 &=\frac{\alpha}{2}\left(\frac1{\mu_1}-\frac1{\mu_2}\right)\\
 &\quad+\frac{\alpha^2}{4}
 \left(\int_\Omega |x|^{-2\beta}\phi^4\,dx
       -\max_{\substack{w\in E_2\\\norm{w}=1}}
           \int_\Omega |x|^{-2\beta}w^4\,dx\right)
       +O(\alpha^3).
\end{aligned}
\end{equation}
The formula holds for both simple and multiple second eigenvalues.
The infimum in \eqref{eq:bestconstant} is unchanged if $\norm{u}\le1$
is replaced by $\norm{u}=1$.
\end{theorem}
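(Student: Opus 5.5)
The plan is to expand the deficit in powers of $\alpha$ uniformly on the unit ball, using \cref{thm:unique} for the structure of $\M_\alpha=\{\pm u_\alpha\}$ and the expansion of $S_\alpha$.

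\textbf{Reduction to the unit sphere.} If $\norm{u}=t<1$ and $u\neq0$, write $u=tv$ with $\norm{v}=1$. Then $J_\alpha(u)\le J_\alpha(v)$, so the numerator is at least that of $v$. For the distance, $\norm{tv-w}^2=t^2-2t\ip{v}{w}+1$ for $w=\pm u_\alpha$. For $\ip{v}{w}\ge0$ this is $\le \norm{v-w}^2+(1-t)^2$. I will instead use the exact deficit $S_\alpha-J_\alpha(tv)\ge S_\alpha-J_\alpha(v)+\alpha(1-t^2)\int|x|^{-2\beta}v^2\,dx$. I expect this ratio to be large compared with $C_\alpha\sim\alpha$ when $t$ is away from $1$, and controllable otherwise. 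The case $u=0$ is handled similarly, since there the deficit is $\alpha/\mu_1+O(\alpha^2)$ and the distance is $1$. Together these give the statement that $\norm{u}\le1$ may be replaced by $\norm{u}=1$.

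\textbf{Exact formula on the sphere.} On the sphere, $\dist(v,\M_\alpha)^2=2-2|\ip{v}{u_\alpha}|$. Write $v=\cos\theta\,u_\alpha+\sin\theta\,z$ with $z\perp u_\alpha$, $\norm{z}=1$ and $\theta\in[0,\pi/2]$, using the symmetry $\M_\alpha=-\M_\alpha$. Then the distance squared is $2(1-\cos\theta)$. Expand
\[
 J_\alpha(v)=\int|x|^{-2\beta}+\alpha\int|x|^{-2\beta}v^2+\frac{\alpha^2}{2}\int|x|^{-2\beta}v^4+O(\alpha^3),
\]
which holds uniformly on the unit sphere by the (subcritical) Adimurthi--Sandeep bound. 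Now use the Euler--Lagrange equation: $u_\alpha$ solves \eqref{eq:EL}, so $\int|x|^{-2\beta}u_\alpha z\,e^{\alpha u_\alpha^2}=\lambda_\alpha^{-1}\ip{u_\alpha}{z}=0$. This kills the cross term at every order in which it appears linearly. The leading part of the deficit becomes
\[
 \alpha\sin^2\theta\Bigl(\int|x|^{-2\beta}u_\alpha^2-\int|x|^{-2\beta}z^2\Bigr)+\dots
\]
Since $z\perp u_\alpha\approx\phi$, we have $\int|x|^{-2\beta}z^2\le 1/\mu_2+O(\alpha)$. Using $\sin^2\theta/(2(1-\cos\theta))=(1+\cos\theta)/2$, the ratio is at least
\[
 \frac{\alpha}{2}(1+\cos\theta)\Bigl(\frac1{\mu_1}-\frac1{\mu_2}\Bigr)+O(\alpha^2).
\]
The infimum is approached as $\theta\to\pi/2$, i.e.\ for $v\perp u_\alpha$, where the factor is $1/2$ of the local constant. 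For the upper bound I take $v=z\in E_2$, projected orthogonally to $u_\alpha$, which gives \eqref{eq:best-exp}.

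\textbf{Second coefficient.} At $\theta=\pi/2$ the ratio equals $(S_\alpha-J_\alpha(z))/2$ exactly. Expanding with \eqref{eq:sup-exp}, the $\alpha^2$ terms are $\tfrac{\alpha^2}{4}\bigl(\int\phi^4-\int z^4\bigr)$ (weighted). To first order, minimizing over $z\perp u_\alpha$ means maximizing $\int|x|^{-2\beta}z^2$. This forces $z$ to be within $O(\sqrt\alpha)$, or $O(\alpha)$ after refinement, of the unit sphere of $E_2$. I decompose $z=w+\eta$ with $w\in E_2$ and use the spectral gap $\mu_3>\mu_2$ to bound the quadratic loss from $\eta$ below. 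The shift of the constraint $z\perp u_\alpha=\phi+\alpha\psi+\dots$ contributes only $O(\alpha^2)$ corrections to $\int z^2$, and these are second order in the perturbation. Maximizing over $w$ then produces $\max_{E_2}\int|x|^{-2\beta}w^4$, with no simplicity of $\mu_2$ needed.

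\textbf{Main obstacle.} The hardest step is showing that $\theta<\pi/2$ never beats $\theta=\pi/2$ at order $\alpha^2$. The factor $(1+\cos\theta)/2$ gives a margin of order $\alpha\cos\theta$, while the $\alpha^2$ cubic and quartic cross terms are $O(\alpha^2\sin\theta\cos\theta)$. So the difference is nonnegative once $\alpha$ is small, but only after checking that the higher-order terms in the expansion are uniformly $O(\alpha^3)$ on the sphere. The remaining case $\theta$ near $0$ uses the nondegeneracy of $u_\alpha$, which follows from the perturbation of $\mu_1<\mu_2$. Positivity of $C_\alpha$ then follows from the lower bound.
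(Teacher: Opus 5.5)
Your route is correct in outline but differs from the paper's.

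\textbf{What the paper does.} It never works in polar coordinates around $u_\alpha$. It conjugates by an orthogonal map $R_\alpha$ with $R_\alpha\phi=u_\alpha$. Since $F_\alpha\circ R_\alpha$ and $F_0$ are both constrained-critical at $\phi$, \cref{lem:abstract-transfer} makes the normalized quotients agree up to $O(\alpha)$ uniformly on the whole sphere, including as $u\to\pm\phi$. The second coefficient then comes from \cref{prop:abstract}, using the perturbation $H(u)=P(u)+2\ip{Tu}{Au}$, the split \eqref{eq:abstract-split} and a completed square. The ball is handled exactly by \cref{lem:ball}, through $S_\alpha-J_\alpha(tz)\ge t^2(S_\alpha-J_\alpha(z))+(1-t^2)(S_\alpha-J_\alpha(0))$ and the bound $S_\alpha-J_\alpha(0)\ge2c$ obtained from an orthonormal sequence.

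\textbf{What you do, and why it works.} You control cross terms with the exact Euler--Lagrange equation, and this works. The odd-in-$z$ terms of order $\alpha^2$ combine to $O(\alpha^2\cos\theta\sin^3\theta)$ up to $O(\alpha^3)$. Hence for $\theta\ge\theta_0$ the quotient at $(\theta,z)$ exceeds that at $(\pi/2,z)$ by at least $\frac\alpha2\cos\theta\,(g-C\alpha)-C\alpha^3$, where $g=\mu_1^{-1}-\mu_2^{-1}$. Your maximization of $J_\alpha$ on the sphere orthogonal to $u_\alpha$ then yields \eqref{eq:second-best} without simplicity of $\mu_2$. In that step the $\phi$-component is $O(\alpha)$, and you complete the square in the part off $\spanop\{\phi\}\oplus E_2$.

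\textbf{What the paper's route buys.} It gives the exact identity \eqref{eq:exact-constant} and orthogonality of all attaining functions, which \cref{thm:attainment} needs. With a remainder that is only uniformly $O(\alpha^3)$, your comparison shows that $\theta<\pi/2$ cannot beat $\theta=\pi/2$ by more than $C\alpha^3$. It does not show that it never beats it: the margin $\frac\alpha2g\cos\theta$ falls below $\alpha^3$ once $\cos\theta\lesssim\alpha^2$. The paper gets exactness from $\partial_tq_\varepsilon\ge g/4$ on $t\le1/2$. For the present theorem your weaker statement suffices.

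\textbf{Two steps you should write out.}

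Near $\theta=0$ you need a bound that is uniform in $\alpha$. A fixed-$\alpha$ nondegeneracy neighbourhood from \cref{lem:chart} could shrink as $\alpha\downarrow0$, leaving a range where $O(\alpha^3)/\theta^2$ is uncontrolled. The uniform bound follows from
\[
 f(\theta)=\alpha^{-1}\bigl[S_\alpha-J_\alpha(\cos\theta\,u_\alpha+\sin\theta\,z)\bigr],
\]
which has $f'(0)=0$, $f''(0)=2\lambda_\alpha^{-1}Q_{u_\alpha}[z]\ge2(g-C\alpha)$, and $|f'''|\le C$ by \cref{lem:calculus}. So the quotient is at least $\alpha(g-C\alpha-C\theta)$ there.

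For the ball, the interior quotient is of the same order $\alpha$ as $C_\alpha$, not large compared with it, so an exact comparison is needed. Your inequality does supply it. With $h=|\ip{v}{u_\alpha}|$ and $\dist(tv,\M_\alpha)^2=(1-t)^2+2t(1-h)$, it reduces to
\[
 2c(1-h)+\alpha(1+t)\int_\Omega|x|^{-2\beta}v^2\,dx\ge c(1-t).
\]
This is immediate for $h\le1/2$. For $h>1/2$ it follows from $\int_\Omega|x|^{-2\beta}v^2\,dx\ge h^2/\mu_1-O(\alpha)$ and $2c\le\alpha g+O(\alpha^2)$. Still, the paper's convexity argument is shorter and valid for every $\alpha$ with compact $\M_\alpha$.
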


The next result identifies the functions attaining the sharp constant
through a maximization problem orthogonal to $u_\alpha$.

\begin{theorem}\label{thm:attainment}
For $0<\alpha<\alpha_0$,
\begin{equation}\label{eq:exact-constant}
 C_\alpha=\frac12\left(S_\alpha-
       \max_{\substack{\norm{w}=1\\w\perp u_\alpha}}J_\alpha(w)\right).
\end{equation}
The set of all functions attaining \eqref{eq:bestconstant} is
\begin{equation}\label{eq:attaining-set}
 \mathcal A_\alpha=\left\{w:\norm{w}=1,\ w\perp u_\alpha,\
                  J_\alpha(w)=S_\alpha-2C_\alpha\right\}.
\end{equation}
It is nonempty and compact in $H_0^1(\Omega)$. Every element changes
sign and has distance $\sqrt2$ from $\M_\alpha$. Let
\[
 \mathcal K_2=\operatorname*{arg\,max}_{\substack{w\in E_2\\\norm{w}=1}}
                   \int_\Omega |x|^{-2\beta}w^4\,dx.
\]
As $\alpha\downarrow0$,
\begin{equation}\label{eq:attaining-selection}
 \sup_{w\in\mathcal A_\alpha}\dist(w,E_2)=O(\alpha),\qquad
 \sup_{w\in\mathcal A_\alpha}\dist(w,\mathcal K_2)\longrightarrow0.
\end{equation}
\end{theorem}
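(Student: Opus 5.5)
The proof rests on an exact orthogonal reduction of the quotient in \eqref{eq:bestconstant}. By \cref{thm:unique}, $\M_\alpha=\{u_\alpha,-u_\alpha\}$. We may first restrict to $\norm{u}=1$, using the last assertion of \cref{thm:sharp}. For such $u$ write $u=\cos\theta\,u_\alpha+\sin\theta\,w$ with $\norm{w}=1$, $w\perp u_\alpha$ and $\theta\in[0,\pi]$. Replacing $u$ by $-u$ if necessary, which leaves both $J_\alpha$ and the distance unchanged, we take $\theta\in[0,\pi/2]$. Then
\[
 \dist(u,\M_\alpha)^2=\min\{2-2\cos\theta,\,2+2\cos\theta\}=2-2\cos\theta .
\]
For fixed $w$, set $g_w(\theta)=J_\alpha(\cos\theta\,u_\alpha+\sin\theta\,w)$ on the great circle through $u_\alpha$ and $w$. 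The quotient becomes
\[
 Q_w(\theta)=\frac{S_\alpha-g_w(\theta)}{2(1-\cos\theta)} .
\]
At $\theta=\pi/2$ it equals $\tfrac12(S_\alpha-J_\alpha(w))$. Taking the infimum over $w$ at $\theta=\pi/2$ gives the right side of \eqref{eq:exact-constant}. This proves ``$\le$'', since the maximum of $J_\alpha$ on the unit sphere orthogonal to $u_\alpha$ is attained by the usual subcritical compactness ($\alpha<4\pi(1-\beta)$ makes $J_\alpha$ weakly continuous, and the maximum is on the sphere by monotonicity in the norm).

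The core step is the reverse inequality, uniformly in $w$: we show that $\theta\mapsto Q_w(\theta)$ is minimized only at $\theta=\pi/2$. For this we expand $g_w$ in $\alpha$. Write $e^{\alpha u^2}=1+\alpha u^2+\tfrac{\alpha^2}{2}u^4+O(\alpha^3)$, with the remainder uniform on the unit ball (Adimurthi--Sandeep integrability gives a uniform bound for $\int|x|^{-2\beta}u^6e^{\alpha u^2}$ when $\alpha$ is small). This gives
\[
 S_\alpha-g_w(\theta)=\alpha\sin^2\theta\Bigl(\int|x|^{-2\beta}u_\alpha^2-\int|x|^{-2\beta}w^2\Bigr)-2\alpha\sin\theta\cos\theta\int|x|^{-2\beta}u_\alpha w+O(\alpha^2)\,(1-\cos\theta).
\]
The cross term is $O(\alpha^2)$ because $u_\alpha$ satisfies \eqref{eq:EL} and $w\perp u_\alpha$: we have $\int|x|^{-2\beta}u_\alpha w=\int|x|^{-2\beta}u_\alpha w(1-e^{\alpha u_\alpha^2})=O(\alpha)$. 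Since $w\perp u_\alpha\approx\phi$, the min--max characterization gives $\int|x|^{-2\beta}w^2\le 1/\mu_2+O(\alpha)$.

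The leading part of $Q_w(\theta)$ is then $\tfrac{\alpha}{2}(1+\cos\theta)\,D_w$, with $D_w\ge 1/\mu_1-1/\mu_2-O(\alpha)>0$. This is strictly decreasing on $[0,\pi/2]$, with derivative of size $\alpha\sin\theta$. The main obstacle is to make the $O(\alpha^2)$ corrections smaller than this monotonicity margin uniformly, especially near $\theta=0$, where numerator and denominator both vanish to second order. Near $\theta=0$ I would use the second variation at $u_\alpha$ instead of the expansion: it shows that $\liminf_{\theta\to0}Q_w$ is approximately $\alpha D_w$, roughly twice the value at $\pi/2$, which is the local-to-global ratio $1/2$. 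On compact $\theta$-intervals I would differentiate $g_w$ twice in $\theta$ and compare with the explicit leading profile, with errors of order $\alpha^2\sin\theta$. The conclusion is that $Q_w(\theta)>Q_w(\pi/2)$ for all $\theta\in[0,\pi/2)$ once $\alpha<\alpha_0$. This yields \eqref{eq:exact-constant}, and equality forces $\theta=\pi/2$ and $J_\alpha(w)$ maximal, which gives \eqref{eq:attaining-set}.

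The remaining properties follow from this characterization.
\begin{enumerate}
\item \emph{Nonempty and compact.} The set $\mathcal A_\alpha$ is the argmax set of a weakly continuous functional on a weakly compact set. A weakly convergent maximizing sequence has $J_\alpha$ of its limit equal to the maximum, so the limit has norm one. Norm convergence then follows, which gives compactness.
\item \emph{Distance $\sqrt2$.} For $w\in\mathcal A_\alpha$ we have $\norm{w\mp u_\alpha}^2=2$.
\item \emph{Sign change.} If $w\ge0$, then $\ip{w}{u_\alpha}=\lambda_\alpha\int|x|^{-2\beta}u_\alpha e^{\alpha u_\alpha^2}w>0$ unless $w=0$, contradicting $w\perp u_\alpha$.
\item \emph{Selection \eqref{eq:attaining-selection}.} Since $J_\alpha(w)=|\Omega|_\beta+\alpha\int|x|^{-2\beta}w^2+O(\alpha^2)$ and maximality forces $\int|x|^{-2\beta}w^2\ge1/\mu_2-O(\alpha)$, the spectral gap to $\mu_3$ and the near-orthogonality to $\phi$ give $\dist(w,E_2)^2=O(\alpha)$. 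A refined comparison using $\psi$ from \cref{thm:unique} should improve this to $O(\alpha)$ for the distance itself. Finally, we project $w$ to $E_2$, normalize, and compare the $\alpha^2$ terms with any competitor from $E_2$ corrected to be orthogonal to $u_\alpha$. This shows the quartic integral of the projection tends to $\max_{E_2}$, and compactness of the unit sphere of $E_2$ then gives $\dist(w,\mathcal K_2)\to0$.
\end{enumerate}
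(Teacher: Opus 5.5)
Your architecture matches the paper's: the angle decomposition about $u_\alpha$, showing the quotient is minimized only at $\theta=\pi/2$, weak compactness, the sign-change argument from \eqref{eq:EL}, and a spectral-gap plus quartic comparison for \eqref{eq:attaining-selection}. The gap is at the step you yourself flag, namely uniformity in $w$ as $\theta\downarrow0$.

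\textbf{The displayed expansion.} It is not correct as written. With the cross term displayed separately, the remainder is not $O(\alpha^2)(1-\cos\theta)$. By \eqref{eq:EL}, the $\theta$-derivative at $0$ of your remainder equals $2\alpha\int_\Omega|x|^{-2\beta}u_\alpha w\,dx$. This is generally nonzero, and it is exactly what cancels the derivative of the cross term. So your remainder is only $O(\alpha^2)\sin\theta$, which after division by $2(1-\cos\theta)$ is not small near $\theta=0$.

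\textbf{The near-zero substitute.} You propose a $\liminf_{\theta\to0}$ from the second variation for each fixed $w$. This is pointwise in $w$, so it does not control $\inf_w Q_w(\theta)$ on a $\theta$-interval of length independent of $\alpha$, which is what the reduction needs. The paper avoids this by rotating with $R_\alpha$, so that $F_\alpha\circ R_\alpha$ and $F_0$ share the constrained critical point $\phi$. Then \cref{lem:abstract-transfer} gives an $O(\alpha)$ comparison of normalized quotients that is uniform on the whole sphere, including $u\to\pm\phi$ (\cref{lem:rotation}), and monotonicity is needed only for $t=\cos\theta\le1/2$.

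\textbf{A repair in your own coordinates.} Put $f_w(\theta)=F_\alpha(\cos\theta\,u_\alpha+\sin\theta\,w)$ with $F_\alpha$ from \eqref{eq:F}. Then \eqref{eq:EL} gives $f_w'(0)=0$ exactly. Also \cref{lem:calculus} together with $\ip{T_0u_\alpha}{w}=O(\alpha)$ gives $f_w''(\theta)=-2D_w\cos2\theta+O(\alpha)$, uniformly in $\theta$ and $w$. Integrating twice yields $f_w(0)-f_w(\theta)=D_w\sin^2\theta+O(\alpha\theta^2)$. So your display becomes true once the cross term is absorbed into the remainder. It follows that $Q_w(\theta)=\tfrac\alpha2(1+\cos\theta)D_w+O(\alpha^2)$ uniformly on $(0,\pi/2]$, and $\partial_\theta Q_w=-\tfrac\alpha2D_w\sin\theta+O(\alpha^2)$ on $[\pi/3,\pi/2]$. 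Together these give strict minimization at $\theta=\pi/2$ without any separate second-variation argument.

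\textbf{Interior points.} The infimum in \eqref{eq:bestconstant} is over the closed ball. Knowing that it is unchanged on the sphere does not exclude minimizers with $\norm{u}<1$. You need the nonattainment statement of \cref{lem:ball} to conclude that $\mathcal A_\alpha$ is the set of \emph{all} attaining functions.

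\textbf{The rate $\dist(w,E_2)=O(\alpha)$.} This is only asserted in your proposal, and $\psi$ is not what makes it work.
\begin{enumerate}
\item Write $w=s\phi+z+r$ with $z\in E_2$, $r\perp\spanop\{\phi\}\oplus E_2$, and $s=\ip{w}{\phi-u_\alpha}=O(\alpha)$.
\item The order-$\alpha$ term gives $\int_\Omega|x|^{-2\beta}w^2\,dx-\mu_2^{-1}\le C\alpha^2-\gamma\norm{r}^2$.
\item The quartic functional $P(v)=\tfrac12\int_\Omega|x|^{-2\beta}v^4\,dx$ is Lipschitz on the ball, so $P(w)\le P(\widehat z)+C(\alpha+\norm{r})$ with $\widehat z=z/\norm{z}$.
\item Maximality of $J_\alpha(w)$ against your corrected $E_2$ competitor then gives $\gamma\norm{r}^2+\alpha\bigl(P_2-P(\widehat z)\bigr)\le C\alpha\norm{r}+C\alpha^2$.
\end{enumerate}
This is the paper's inequality. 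It yields both $\norm{r}=O(\alpha)$ and $P_2-P(\widehat z)=O(\alpha)$ at once, and hence both parts of \eqref{eq:attaining-selection}.
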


After dividing the deficit by $\alpha$, the limiting local coefficient
is $\mu_1^{-1}-\mu_2^{-1}$, whereas the global quotient is half this
value at unit vectors in the second eigenspace, which have distance
$\sqrt2$ from $\{\phi,-\phi\}$.

To formulate the disk result, let $v$ be a maximizer and let
$\lambda_v$ be its multiplier in \eqref{eq:EL}. Define
\begin{equation}\label{eq:Q}
 Q_v[h]=\norm{h}^2-\lambda_v\int_\Omega
                  |x|^{-2\beta}(1+2\alpha v^2)e^{\alpha v^2}h^2\,dx.
\end{equation}
This form is nonnegative on
$v^\perp=\{h\in H_0^1(\Omega):\ip{v}{h}=0\}$, the tangent space to
the unit sphere at $v$. Coercivity on $v^\perp$ means that
$Q_v[h]\ge c\norm{h}^2$ for all $h\perp v$ and some $c>0$.
On the disk $D=\{x\in\R^2:|x|<1\}$, radial constrained
nondegeneracy means
\begin{equation}\label{eq:RN}
 h\in H_0^1(D)\text{ radial},\qquad h\perp v,\qquad Q_v[h]=0
                 \quad\Longrightarrow\quad h=0,
\end{equation}
for every positive maximizer $v$.

\begin{theorem}
\label{thm:disk}
Let $\Omega=D$, $0\le\beta<1$, and $0<\alpha\le4\pi(1-\beta)$.
Every maximizing sequence has a strongly convergent subsequence in
$H_0^1(D)$. There exist
$C>0$ and a finite exponent $p\ge2$, depending on $\alpha$ and $\beta$,
such that
\begin{equation}\label{eq:finitepower}
 S_\alpha-\int_\Omega |x|^{-2\beta}e^{\alpha u^2}\,dx\ge C \dist(u,\M_\alpha)^p,
 \qquad \norm{u}\le1.
\end{equation}
If \eqref{eq:RN} holds at every positive maximizer, one may take
$p=2$. At each maximizer, the quadratic form is coercive on the
subspace of functions with zero angular average, independently of
\eqref{eq:RN}.
\end{theorem}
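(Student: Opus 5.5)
Compactness of maximizing sequences on $D$ comes first, split by the size of $\alpha$. For $\alpha<4\pi(1-\beta)$ it follows from Lions-type concentration--compactness for the singular inequality (Adimurthi--Sandeep): a maximizing sequence $u_k\rightharpoonup u$ that fails to converge strongly would give $J_\alpha(u_k)\to J_\alpha(u)$ with $\norm{u}<1$. Rescaling $u/\norm{u}$ (or, if $u=0$, $S_\alpha>\int|x|^{-2\beta}$) then gives a strict increase, contradicting maximality. At $\alpha=4\pi(1-\beta)$ I would use the singular Carleson--Chang bound for concentrating sequences. This gives the concentration level $\int_D|x|^{-2\beta}dx\,(1+e^{1+1/(1-\beta)}\cdot\ldots)$, which is strictly below $S_\alpha$ by the existence results of Csat\'o--Roy/Yang--Zhu. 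Hence a maximizing sequence cannot concentrate, and weak convergence upgrades to strong convergence. Consequently $\M_\alpha$ is compact, and any $u$ with small deficit is close to $\M_\alpha$. Every maximizer $v$ has $|v|$ as a maximizer, so by strong maximum principle $v$ is positive or negative. By Talenti/weighted rearrangement with the radially decreasing weight $|x|^{-2\beta}$, positive maximizers are radially decreasing.

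Second, I would prove the nonradial coercivity claim. Write $h=h_0+h_\perp$, where $h_0$ is the angular average and $h_\perp$ has zero angular average. Since $v$ is radial, $Q_v$ splits orthogonally, and $h_\perp\perp v$ automatically. Expanding $h_\perp$ in Fourier modes $e^{ik\theta}$ with $k\ge1$, the form becomes a one-dimensional radial form with the extra term $k^2/r^2$. Differentiating the equation, the function $\partial_r v$ (times $\cos\theta$) satisfies the linearized equation up to the weight term. Specifically, $w=v'(r)$ solves $-w''-w'/r+w/r^2=\lambda(1+2\alpha v^2)e^{\alpha v^2}r^{-2\beta}w+2\beta\lambda r^{-2\beta-1}ve^{\alpha v^2}$. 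The extra term has a favorable sign, since $v'<0$ and $v>0$. A Picone/Sturm comparison with the negative supersolution $w$ then shows the $k=1$ radial operator is positive definite on $H_0^1$, with strict positivity from the boundary behavior $w(1)\ne0$ by Hopf. Higher $k$ only increase the form. Compactness of the multiplication term (it lies in $L^p$, $p>1$, for $v\in H_0^1$ at critical $\alpha$ once $v$ is a bounded smooth maximizer) upgrades positivity to coercivity $Q_v[h_\perp]\ge c\norm{h_\perp}^2$.

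Third, I would derive the stability estimates. Near a maximizer $v$ (modulo the sign pair), use a Lyapunov--Schmidt/Łojasiewicz reduction of $J_\alpha$ restricted to the unit sphere. The kernel of $Q_v$ on $v^\perp$ is finite-dimensional, because the form is compact perturbation of identity, and by the previous step it is radial. The map $u\mapsto J_\alpha(u/\norm u)$ is real-analytic on $H_0^1$ for the relevant $\alpha$ near bounded $v$ by Moser--Trudinger integrability. Then the Łojasiewicz--Simon inequality, as in Engelstein--Neumayer--Spolaor, gives $S_\alpha-J_\alpha(u)\ge c\,\dist(u,\M_\alpha)^p$ locally, for some finite $p\ge2$. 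If \eqref{eq:RN} holds, the kernel is trivial and the Taylor expansion gives $p=2$ directly. For $\norm u<1$, radial scaling $u\mapsto u/\norm u$ increases $J_\alpha$ and moves distance by at most $1-\norm u$. In turn $1-\norm u$ is controlled by the deficit through $J_\alpha(u/\norm u)-J_\alpha(u)\gtrsim 1-\norm u$. Globalization is by compactness: away from $\M_\alpha$ the deficit is bounded below by the first step, and distance is at most $2$.

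The main obstacle I expect is the critical case $\alpha=4\pi(1-\beta)$. There, both analyticity of $J_\alpha$ near $v$ in $H_0^1$ and compactness of the Hessian perturbation need care, because $e^{\alpha u^2}$ is only borderline integrable. I would handle it by working in a small $H_0^1$ ball around a bounded $v$. There $u=v+h$ with $\norm h$ small, and $e^{\alpha(v+h)^2}\le Ce^{(1+\epsilon)\alpha h^2/\epsilon'}$ stays uniformly integrable by singular Moser--Trudinger.
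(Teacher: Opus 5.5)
Your outline is sound and, once the slips below are fixed, proves the statement. The stability step is the paper's: an analytic chart, the \L{}ojasiewicz reduction to the finite-dimensional constrained kernel, and a finite cover of the compact set $\M_\alpha$. The compactness and nonradial steps, however, take a different route.

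\textbf{Compactness at criticality.} At $\alpha=4\pi(1-\beta)$ you use the general singular concentration--compactness alternative together with the Csat\'o--Roy/Yang--Zhu concentration level. That is how the paper treats smooth general domains in \cref{cor:general}. On the disk the paper instead rearranges the sequence and applies the radial isometry $s=r^{1-\beta}$ of \cref{lem:transform}. As a result, \cref{lem:criticalcompact} needs only the classical unweighted Carleson--Chang bound $\pi(1+e)$ and their test-function inequality, which is more self-contained.

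\textbf{Nonradial coercivity.} You keep the variable $r$ and use $\chi=-v'$ as a positive supersolution of the frequency-one operator, with the term produced by differentiating the weight supplying the gain, and then apply Picone. The paper passes to $U(s)$, where $-U'$ is an exact frequency-one solution with unweighted potential. The original frequencies then become $k/(1-\beta)\ge1$, and the Wronskian identity \eqref{eq:wronskian} gives the explicit constant $1-\lambda_v/(a^2\rho_1)$. Its continuity in the parameter is reused in \cref{thm:parameters}(iii). Your version avoids the change of variables but must cope with $v'(r)\sim-Cr^{1-2\beta}$, which is unbounded for $\beta>1/2$. The Picone boundary term at $r=0$ still vanishes, since $rv''/v'\to1-2\beta$ and $g(r)\to0$ whenever $\int_0^1(r|g'|^2+g^2/r)\,dr<\infty$.

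\textbf{Corrections.} Four points need fixing.

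First, differentiating $-v''-v'/r=\lambda r^{-2\beta}ve^{\alpha v^2}$ gives, for $w=v'$,
\[
 -w''-\frac{w'}{r}+\frac{w}{r^2}-\lambda r^{-2\beta}(1+2\alpha v^2)e^{\alpha v^2}w
 =-2\beta\lambda r^{-2\beta-1}ve^{\alpha v^2},
\]
with a minus sign on the right. With the plus sign you wrote, $-v'$ would be a positive subsolution and Picone would give nothing. Only with the correct sign is the extra term favorable.

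Second, the concentration level at the origin is $(1+e)\int_D|x|^{-2\beta}\,dx=\pi(1+e)/(1-\beta)$, not the expression you wrote. After the radial change of variables, the needed strict inequality is exactly Carleson--Chang's $S_{4\pi,0}>\pi(1+e)$.

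Third, rearrangement alone only shows that some maximizer is radial. You need every positive maximizer to be radial, because the angular splitting of $Q_v$ is required at each maximizer. For $\beta>0$ use the layer-cake equality case for the strictly decreasing weight; for $\beta=0$ use Gidas--Ni--Nirenberg, as the paper does.

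Finally, the bound $J_\alpha(u/\norm{u})-J_\alpha(u)\gtrsim1-\norm{u}$ is not uniform on the ball. In the subcritical case it fails along $u=te_j$ with $(e_j)$ orthonormal and weakly null, since both $J_\alpha(e_j)$ and $J_\alpha(te_j)$ tend to $J_\alpha(0)$. It does hold near $\M_\alpha$, and that suffices once you globalize by compactness in the ball. The paper's convexity bound
\[
 S_\alpha-J_\alpha(tz)\ge t^2\bigl(S_\alpha-J_\alpha(z)\bigr)+(1-t^2)\bigl(S_\alpha-J_\alpha(0)\bigr)
\]
is uniform in $z$ and avoids the case split.
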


For radial functions the change of variables
\begin{equation}\label{eq:radial-transform}
 s=r^{1-\beta},\qquad U(s)=\sqrt{1-\beta}\,v(r)
\end{equation}
preserves the Dirichlet norm and radial constrained quadratic form,
and transforms the functional into
$(1-\beta)^{-1}\int_D e^{\alpha U^2/(1-\beta)}\,dx$.
At $\alpha=4\pi(1-\beta)$ this is the ordinary critical disk problem,
so \cite[Theorem~C]{LiSun} gives the following result.

\begin{corollary}\label{cor:critical}
Let $D$ be the unit disk and $0\le\beta<1$. Let $u_*>0$ maximize
$\int_D e^{4\pi u^2}\,dx$ over $u\in H_0^1(D)$ with $\norm{u}\le1$.
Then
\begin{equation}\label{eq:critical-profile}
 \begin{aligned}
 v_\beta(r)&=\frac{u_*(r^{1-\beta})}{\sqrt{1-\beta}},\qquad
 \M_{4\pi(1-\beta)}=\{v_\beta,-v_\beta\},\\
 S_{4\pi(1-\beta)}&=\frac1{1-\beta}\int_D e^{4\pi u_*^2}\,dx.
 \end{aligned}
\end{equation}
The constrained Hessian at $v_\beta$ is coercive on its full tangent
space. Consequently, there is $c_\beta>0$ such that
\begin{equation}\label{eq:critical-stability}
 S_{4\pi(1-\beta)}-\int_D |x|^{-2\beta}e^{4\pi(1-\beta)u^2}\,dx
 \ge c_\beta\min_{\sigma=\pm1}\norm{u-\sigma v_\beta}^2,
 \qquad \norm{u}\le1.
\end{equation}
The exponent two is sharp.
\end{corollary}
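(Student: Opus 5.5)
The plan is to reduce everything to the unweighted critical disk problem through the radial change of variables \eqref{eq:radial-transform}, and then to import \cite[Theorem~C]{LiSun}.

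\emph{Step 1: radial reduction of the maximizers.} By \cref{thm:disk}, maximizers exist at $\alpha=4\pi(1-\beta)$. By the singular weighted rearrangement, every maximizer has radial modulus, and I would first reduce to radially decreasing $v$. Here I would use that $|x|^{-2\beta}$ is radially decreasing: the Hardy--Littlewood inequality increases $J_\alpha$, and Pólya--Szegő decreases $\norm{\cdot}$. Every maximizer then satisfies \eqref{eq:EL}, so it has one sign by the strong maximum principle applied to $|v|$. For positive solutions on the disk, radial symmetry follows from the Gidas--Ni--Nirenberg moving plane method; this applies because $f(x,u)=|x|^{-2\beta}ue^{\alpha u^2}$ is decreasing in $|x|$.

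For radial $v$, set $s=r^{1-\beta}$ and $U(s)=\sqrt{1-\beta}\,v(r)$. Then $ds=(1-\beta)r^{-\beta}dr$, so
\[
\int_0^1 v_r^2\,r\,dr=\int_0^1 U_s^2\,s\,ds,
\]
which means the Dirichlet norm is preserved. Likewise
\[
\int_0^1 r^{-2\beta}e^{\alpha v^2}r\,dr=(1-\beta)^{-1}\int_0^1 e^{\alpha U^2/(1-\beta)}s\,ds.
\]
At $\alpha=4\pi(1-\beta)$ this is $(1-\beta)^{-1}$ times the radial part of $\int_D e^{4\pi U^2}$. The map is a bijection of radial $H_0^1(D)$ onto itself. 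Hence positive maximizers correspond to positive radial maximizers of the classical problem. Li--Sun uniqueness gives $U=u_*$, and this yields \eqref{eq:critical-profile}, including the formula for $S_{4\pi(1-\beta)}$. Note that the supremum of the unweighted problem over all functions equals the supremum over radial functions, again by rearrangement.

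\emph{Step 2: coercivity.} I would split $h\perp v_\beta$ into its angular average $h_0$ and its remainder $h_1$. Both pieces are orthogonal to $v_\beta$, and $Q_{v_\beta}$ decouples across Fourier modes. The piece $h_1$ is handled by the last assertion of \cref{thm:disk}. For $h_0$, the transform carries $Q_{v_\beta}[h_0]$ to the radial constrained form of Li--Sun at $u_*$. The check is that
\[
\lambda_v r^{-2\beta}(1+2\alpha v^2)e^{\alpha v^2}h^2\,r\,dr
\]
becomes the analogous unweighted expression in $U$ and $H=\sqrt{1-\beta}\,h$. This uses $\alpha v^2=4\pi U^2$ and the fact that the multiplier rescales by $(1-\beta)$ consistently. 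Li--Sun's constrained coercivity then gives $Q\ge c\norm{h_0}^2$, and combining the two pieces gives coercivity on the whole tangent space.

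\emph{Step 3: quadratic stability and sharpness.} Coercivity and uniqueness give \eqref{eq:critical-stability} through a local-to-global argument. Near $\pm v_\beta$, a second-order Taylor expansion on the sphere gives a quadratic lower bound. Far from $\pm v_\beta$, I would argue by contradiction: suppose the ratio of deficit to squared distance tends to $0$ while the distance stays bounded below. Then the deficit tends to $0$, so the sequence is maximizing. By the compactness in \cref{thm:disk} and the identification $\M=\{\pm v_\beta\}$, it converges to $\pm v_\beta$, which contradicts the distance being bounded below. Points with $\norm{u}<1$ are reduced to $u/\norm{u}$. Since $J$ increases along rays, the deficit is at least that of $u/\norm{u}$ plus a term comparable to $1-\norm{u}$, and $1-\norm{u}\gtrsim(1-\norm{u})^2$. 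Sharpness of the exponent two follows by testing $u=(v_\beta+th)/\norm{v_\beta+th}$ with $h\perp v_\beta$: the deficit is $O(t^2)$ and the distance is of order $t$.

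\emph{Main obstacle.} The main obstacle is the critical Taylor expansion. At $\alpha=4\pi(1-\beta)$, $J$ is not $C^2$ uniformly on the sphere, so the remainder $o(\norm{h}^2)$ needs care. I would control it using the boundedness of $v_\beta$ together with the Adimurthi--Sandeep inequality. The key estimate is
\[
e^{\alpha(v+h)^2}\le e^{\alpha v^2(1+\epsilon)}e^{\alpha(1+1/\epsilon)h^2},
\]
where the second factor is integrable for $\norm{h}$ small. This gives a third-order bound of order $\norm{h}^3$.
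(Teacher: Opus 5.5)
Your proposal is correct and is essentially the paper's proof. The radial isometry of \cref{lem:transform} together with \cite[Theorem~C]{LiSun} identifies $\M_{4\pi(1-\beta)}=\{v_\beta,-v_\beta\}$ and gives radial coercivity, \cref{prop:nonradial} handles the zero-angular-average part, and the quadratic estimate and sharpness come from the chart expansion plus compactness. The paper imports these last steps from \cref{lem:chart} and \cref{thm:disk} with $p=2$ rather than re-deriving them.

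Two minor fixes. First, the multiplier of $u_*$ is $\lambda_{v_\beta}/(1-\beta)^2$, not a rescaling by $1-\beta$. Second, $J_\alpha(z)-J_\alpha(tz)$ is not uniformly comparable to $1-t$: it tends to zero along unit vectors $z$ with $\norm{z}_{L^\infty}\to0$. So extend to $\norm{u}<1$ with the convexity bound $J_\alpha(tz)\le t^2J_\alpha(z)+(1-t^2)J_\alpha(0)$ as in \cref{lem:ball}, or treat $z$ far from $\M_{4\pi(1-\beta)}$ separately.
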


The next theorem describes the disk problem across the parameter range,
with an exceptional set consisting of parameters with a degenerate maximizer.

\begin{theorem}\label{thm:parameters}
Let $D$ be the unit disk. There is a finite set
$\Sigma\subset(0,4\pi)$, possibly empty and independent of $\beta$,
with the following properties for every $0\le\beta<1$ and
$0<\alpha\le4\pi(1-\beta)$.
\begin{enumerate}[label=\textup{(\roman*)}]
\item The maximizing set $\M_\alpha$ on $D$ is finite. Every maximizer
has a coercive constrained Hessian if and only if $\alpha/(1-\beta)\notin\Sigma$.
At a degenerate maximizer its constrained kernel is one-dimensional
and consists of radial functions.
\item A global quadratic deficit estimate in the Dirichlet distance
holds if and only if $\alpha/(1-\beta)\notin\Sigma$. If $\alpha/(1-\beta)\in\Sigma$, the finite-power estimate in
\cref{thm:disk} holds, and every valid exponent is at least four.
\item There is $\delta\in(0,4\pi)$, independent of $\beta$, such that
for $(4\pi-\delta)(1-\beta)\le\alpha\le4\pi(1-\beta)$ the positive maximizer
$v_\alpha$ is unique and depends real analytically on $\alpha$.
For each fixed $\beta$ there are $q_\beta,c_\beta>0$, independent of
$\alpha$ in this interval, such that
\begin{align}
 Q_{v_\alpha}[h]&\ge q_\beta\norm{h}^2
       &&\text{for }h\perp v_\alpha,\label{eq:uniform-end-Q}\\
 S_\alpha-\int_D|x|^{-2\beta}e^{\alpha u^2}\,dx
 &\ge c_\beta\min_{\sigma=\pm1}\norm{u-\sigma v_\alpha}^2
       &&\text{for }\norm{u}\le1.\label{eq:uniform-end-stability}
\end{align}
\end{enumerate}
\end{theorem}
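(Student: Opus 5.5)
The plan is to reduce everything to the radial transformed problem through \eqref{eq:radial-transform}. After this reduction only the normalized parameter $a=\alpha/(1-\beta)\in(0,4\pi]$ remains, which explains why $\Sigma$ is independent of $\beta$. The first step is to show that every maximizer on $D$ is radial up to sign. For positive maximizers this follows from symmetric decreasing rearrangement with respect to the weight $|x|^{-2\beta}$: rearrangement preserves the weighted integral and does not increase $\norm{\cdot}$, and equality in P\'olya--Szeg\H{o} together with the Euler--Lagrange equation \eqref{eq:EL} forces radial symmetry. Every maximizer has constant sign, since $|u|$ is also a maximizer and $|u|$ solves \eqref{eq:EL}, so the strong maximum principle applies. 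Hence $\M_\alpha=\{\pm v\}$ with $v$ ranging over radial positive maximizers. In the $s$-variable each such $v$ is a radial positive solution $U$ of $-\Delta U=\Lambda Ue^{aU^2}$ on $D$ with $\norm{U}=1$. By ODE uniqueness, $U$ is determined by its central value $\gamma=U(0)$ after scaling. So normalized solutions form a real-analytic curve $\gamma\mapsto(U_\gamma,a(\gamma))$, and the energy $E(\gamma)=\int_D e^{a U_\gamma^2}$ restricted to fixed $a$ is analytic.

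The second step is the degeneracy criterion. \cref{thm:disk} already gives coercivity on the zero-angular-average subspace, so any kernel of $Q_v$ on $v^\perp$ is radial. Differentiating the normalized ODE family in $\gamma$ produces a radial Jacobi field. I expect that the radial constrained kernel is nontrivial exactly when $a'(\gamma)=0$ at the maximizing $\gamma$, i.e.\ when the energy derivative with respect to the central value vanishes in the sense of \cref{prop:shooting}. Radial ODE uniqueness makes the kernel at most one-dimensional, which gives the kernel statement in (i). Analyticity of $a(\gamma)$ on the relevant compact range then yields finiteness. Compactness of maximizing sequences (\cref{thm:disk}) at each $a\le4\pi$ keeps $\gamma$ bounded, and bubbling is excluded at $a=4\pi$ by \cref{cor:critical}. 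A nonconstant analytic function has finitely many critical points there, so the set $\Sigma$ of $a$ values admitting a degenerate maximizer is finite. Moreover $a(\gamma)$ is not constant, since $a\to0$ as $\gamma\to0$ by \cref{thm:unique}. The same finiteness of critical values, combined with analyticity, shows that each level set $\{a(\gamma)=a\}$ is finite, so $\M_\alpha$ is finite. Nondegeneracy near $a=4\pi$ is \cref{cor:critical}, so $\Sigma\subset(0,4\pi)$.

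For (ii), if $a\notin\Sigma$, coercivity of $Q_v$ at each of the finitely many maximizers gives a local quadratic bound. Compactness of maximizing sequences upgrades it to a global one by the usual contradiction argument. If $a\in\Sigma$, I would test along the degenerate radial direction $h$, using $u_t=(v+th)/\norm{v+th}$. The deficit vanishes to second order, and the third-order term must vanish because $v$ is a maximizer, since a nonzero cubic term would change sign in $t$. So the deficit is $O(t^4)$ while the distance is of order $t$, which forces $p\ge4$ and rules out any quadratic estimate. The finite-power bound itself is \cref{thm:disk}.

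For (iii), near $a=4\pi$ the critical maximizer is nondegenerate by \cref{cor:critical}, so the implicit function theorem continues it analytically to a branch $v_\alpha$ for $a\in[4\pi-\delta,4\pi]$. Uniqueness on this interval follows by contradiction: another family of maximizers would converge, by compactness, to a maximizer at some $a\to4\pi$ and hence to $\pm v_\beta$, contradicting local uniqueness from the implicit function theorem. Uniform coercivity \eqref{eq:uniform-end-Q} follows from continuity of $Q_{v_\alpha}$ in $\alpha$ after shrinking $\delta$. The estimate \eqref{eq:uniform-end-stability} then follows from the local estimate together with a uniform compactness gap. The uniformity of the concentration exclusion as $a\to4\pi$ is the main obstacle. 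I would first try to handle it with the Carleson--Chang bound, showing that concentrating sequences have deficit bounded below uniformly near criticality.
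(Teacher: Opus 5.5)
Your outline follows the paper's route. You reduce to $\tau=\alpha/(1-\beta)$ through \cref{lem:transform}, use the criterion $\mathcal E'(b)\ne0$ of \cref{prop:shooting} together with \cref{prop:nonradial}, and get finiteness from analyticity of $\mathcal E$. For (ii) you use vanishing of the cubic term along a kernel direction, and for (iii) continuation of the critical maximizer.

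The genuine gap is compactness \emph{uniform in the parameter} up to $\tau=4\pi$. Finiteness of $\Sigma$ needs one bound $b=\sqrt\tau\,U(0)\le M$ valid for all positive maximizers and all $\tau\in(0,4\pi]$ simultaneously. Zeros of $\mathcal E'$ are only known to be discrete in $(0,\infty)$. Without such a bound, degenerate central values could run off to infinity along maximizers at parameters accumulating in $(0,4\pi]$. Compactness of maximizing sequences at each fixed $\tau$ gives no uniformity in $\tau$. \cref{cor:critical} concerns uniqueness and nondegeneracy of the critical maximizer and says nothing about blow-up of maximizers as $\tau_j\uparrow4\pi$. Your uniqueness argument in (iii) uses the same missing ingredient, since maximizers at $\tau_j\to4\pi$ must converge to $u_*$ in $H_0^1(D)$. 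So does the uniform estimate \eqref{eq:uniform-end-stability}. There you flag it as the main obstacle and leave it open, so the earlier appeal to \cref{cor:critical} does not cover it either.

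The missing idea is monotonicity in the exponent. Suppose $\tau_j\le4\pi$, $\tau_j\to4\pi$ and $S_{\tau_j,0}-J_{\tau_j,0}(U_j)\to0$. Then $J_{4\pi,0}(U_j)\ge J_{\tau_j,0}(U_j)$, and $S_{\tau_j,0}\to S_{4\pi,0}$ by monotonicity and dominated convergence at a fixed critical maximizer. Hence $(U_j)$ is a maximizing sequence for the \emph{fixed} critical problem, and \cref{lem:criticalcompact} gives strong convergence; for limits $\bar\tau<4\pi$, subcritical Vitali suffices. For maximizers, $W^{2,p}$ estimates upgrade this to $C^1(\overline D)$ convergence, which gives the uniform bound on $b$; this is \cref{lem:parameter-compact}. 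The weighted analogue $J_{4\pi a}(w_j)\ge J_{\alpha_j}(w_j)$ closes (iii). Your Carleson--Chang plan would work for the same reason, since the critical concentration bound dominates every $J_{\tau_j,0}$, but no separate uniform estimate is needed.

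Smaller points:
\begin{enumerate}
\item Parametrize by $b$, with $\tau=\mathcal E(b)$ the Dirichlet energy of the rescaled shooting solution. The quantity $U(0)=b/\sqrt{\mathcal E(b)}$ need not be injective in $b$, and $\int_D e^{\tau U^2}\,dx$ is not the relevant function.
\item Since the central values range over $(0,M]$, \cref{lem:small-shoot} is what keeps zeros of $\mathcal E'$ away from $b=0$.
\item In (iii), shrinking $\delta$ by continuity at fixed $\beta$ would make $\delta$ depend on $\beta$. Instead, take $\delta$ small enough for the implicit function theorem and with $[4\pi-\delta,4\pi]\cap\Sigma=\emptyset$, so every $Q_{v_\alpha}$ is coercive. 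Then minimize the continuous coercivity constant over the compact $\alpha$-interval, or argue in the $\beta$-free variables as the paper does.
\end{enumerate}
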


Section~\ref{sec:prelim} develops the spectral and exponential estimates
and proves \cref{thm:unique}, while Section~\ref{sec:stability} uses
an abstract perturbation argument to prove \cref{thm:sharp,thm:attainment}
and describe the attaining functions.
Section~\ref{sec:disk} proves \cref{thm:disk,cor:critical,thm:parameters},
extends finite-power stability to general domains, including the smooth
critical case, and compares constants for different singular weights.

\section{Spectral estimates and uniqueness of maximizers}\label{sec:prelim}

\subsection{Spectral and exponential estimates}

\begin{lemma}\label{lem:weighted}
For each finite $q\ge1$, the embedding $H_0^1(\Omega)\hookrightarrow L^q(\Omega,|x|^{-2\beta}\,dx)$
is compact. The operator $T_0:H_0^1(\Omega)\to H_0^1(\Omega)$ determined by
\begin{equation}\label{eq:T}
 \ip{T_0u}{h}=\int_\Omega |x|^{-2\beta}uh\,dx
\end{equation}
is compact, positive, injective, and self-adjoint. Its two largest eigenvalues
are $\mu_1^{-1}>\mu_2^{-1}$, and the largest is simple.
\end{lemma}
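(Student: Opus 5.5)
We first prove the compact embedding, since every property of $T_0$ follows from it. Fix a finite $q\ge1$ and choose $s>1$ with $2\beta s<2$, which is possible because $\beta<1$. Then $|x|^{-2\beta}\in L^s(\Omega)$. By Hölder's inequality with exponents $s$ and $s'$,
\[
 \int_\Omega |x|^{-2\beta}|u|^q\,dx\le \bigl\||x|^{-2\beta}\bigr\|_{L^s}\,\|u\|_{L^{qs'}}^q .
\]
In two dimensions $H_0^1(\Omega)$ embeds compactly into $L^r(\Omega)$ for every finite $r$ (Rellich--Kondrachov, or the Moser--Trudinger bound combined with Rellich). Take a bounded sequence in $H_0^1(\Omega)$. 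It has a subsequence converging in $L^{qs'}(\Omega)$. Applying the displayed inequality to differences of its terms, this subsequence converges in $L^q(\Omega,|x|^{-2\beta}dx)$. This gives the compact embedding.

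Next we treat $T_0$. The bilinear form $b(u,h)=\int_\Omega|x|^{-2\beta}uh\,dx$ is bounded on $H_0^1(\Omega)$, by the case $q=2$ and Cauchy--Schwarz in the weighted space. By the Riesz representation theorem it defines a bounded operator $T_0$ through \eqref{eq:T}.
\begin{itemize}
\item \emph{Self-adjoint and positive:} $b$ is symmetric, and $\ip{T_0u}{u}=\int|x|^{-2\beta}u^2\ge0$.
\item \emph{Injective:} if $T_0u=0$, then testing with $h=u$ gives $\int|x|^{-2\beta}u^2=0$, so $u=0$.
\item \emph{Compact:} let $u_n\rightharpoonup u$ in $H_0^1$. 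Then
\[
 \norm{T_0(u_n-u)}^2=b\bigl(u_n-u,T_0(u_n-u)\bigr)\le \|u_n-u\|_{L^2_w}\,\|T_0(u_n-u)\|_{L^2_w},
\]
where $L^2_w=L^2(\Omega,|x|^{-2\beta}dx)$. The second factor is bounded by $C\norm{T_0(u_n-u)}$, and the first tends to $0$ by the compact embedding. Hence $T_0u_n\to T_0u$ in norm.
\end{itemize}
By the spectral theorem for compact self-adjoint positive injective operators, the eigenvalues are positive, accumulate only at $0$, and have an orthonormal basis of eigenvectors. Moreover, $T_0\phi=\mu^{-1}\phi$ is exactly the weak form of $-\Delta\phi=\mu|x|^{-2\beta}\phi$. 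So the eigenvalues of $T_0$ are $\mu_k^{-1}$, and its two largest are $\mu_1^{-1}\ge\mu_2^{-1}$.

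The main step is simplicity of the top eigenvalue, which also gives the strict inequality. We use the variational characterization
\[
 \mu_1^{-1}=\max_{\norm{u}=1}\int|x|^{-2\beta}u^2 .
\]
If $u$ is a maximizer, then so is $|u|$, since $\norm{|u|}=\norm{u}$ and the integrand is unchanged. Hence $|u|$ is a nonnegative weak solution of $-\Delta|u|=\mu_1|x|^{-2\beta}|u|\ge0$, so it is weakly superharmonic. The strong maximum principle (weak Harnack inequality for supersolutions) then gives $|u|>0$ in the connected set $\Omega$, unless $|u|\equiv0$.

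Consequently, every eigenfunction for $\mu_1$ has constant sign. If the eigenspace had dimension at least two, it would contain two orthogonal eigenfunctions. Both would have constant sign, so after choosing signs both are positive, and then $\int|x|^{-2\beta}u_1u_2>0$. But $\mu_1^{-1}\ip{u_1}{u_2}=\int|x|^{-2\beta}u_1u_2$, so orthogonality forces this integral to vanish, a contradiction. Therefore the top eigenvalue is simple, and the next distinct eigenvalue $\mu_2^{-1}$ satisfies $\mu_2^{-1}<\mu_1^{-1}$.

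The delicate point is the maximum principle with the singular weight. The right-hand side $\mu_1|x|^{-2\beta}|u|$ lies in $L^p$ for some $p>1$, since $|u|$ is in every $L^r$ and $|x|^{-2\beta}\in L^s$ with $s>1$. This suffices for the weak Harnack inequality for nonnegative supersolutions, and it is the only place where the range $\beta<1$ and the connectedness of $\Omega$ are used.
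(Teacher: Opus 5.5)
Your proof is correct and follows the paper's route: H\"older's inequality with $|x|^{-2\beta}\in L^s$ for some $s>1$, compact planar Sobolev embeddings, positivity of $|u|$ for a first eigenfunction by the strong maximum principle, and simplicity via weighted orthogonality to a positive eigenfunction. The one step to tighten is ``$|u|>0$, consequently $u$ has constant sign.'' This needs either continuity of $u$ or a separate argument. The paper gets continuity from elliptic regularity, using exactly the $L^p$ ($p>1$) bound on the right-hand side that you note. Alternatively, observe that $u^+$ and $u^-$ are nonnegative first eigenfunctions with $u^+u^-=0$, so the strong maximum principle forces one of them to vanish identically.
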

\begin{proof}
Choose $s>1$ with $\beta s<1$, allowing any finite $s>1$ when
$\beta=0$, so that $|x|^{-2\beta}\in L^s(\Omega)$ and
\[
 \int_\Omega |x|^{-2\beta}|u|^q\,dx
 \le\norm{|x|^{-2\beta}}_{L^s}\norm{u}_{L^{qs'}}^q.
\]
Compact planar Sobolev embeddings prove the first assertion and the
compactness and self-adjointness of \eqref{eq:T}. The first weighted
Rayleigh quotient is attained, and the absolute value of any minimizer
is again a minimizer. Elliptic regularity with an exponent greater than
one gives continuity, and the strong maximum principle makes this
absolute value positive, so every first eigenfunction has one sign.
The first eigenspace is therefore one-dimensional, since otherwise it
would contain a nonzero function orthogonal to a positive first
eigenfunction in $L^2(|x|^{-2\beta}\,dx)$.
The spectral theorem now gives \eqref{eq:eigenproblem}.
\end{proof}

For the small-parameter proofs we use the normalized functional
\begin{equation}\label{eq:F}
 F_\alpha(u)=
 \begin{cases}
 \displaystyle\frac1\alpha\int_\Omega |x|^{-2\beta}
                  \bigl(e^{\alpha u^2}-1\bigr)\,dx,
                                                   &\quad\alpha\ne0,\\[10pt]
 \displaystyle\int_\Omega |x|^{-2\beta}u^2\,dx,      &\quad\alpha=0.
 \end{cases}
\end{equation}
This normalization gives the quadratic limit $F_0$ used below.

\begin{lemma}\label{lem:calculus}
For every fixed $R>1$, there is $\varepsilon_R>0$ such that
$(\alpha,u)\mapsto F_\alpha(u)$ is smooth for
$|\alpha|<\varepsilon_R$, $\norm{u}<R+1$. Uniformly on $\{\norm{u}\le R\}$,
\begin{equation}\label{eq:C2-exp}
 F_\alpha(u)=F_0(u)+\frac{\alpha}{2}\int_\Omega |x|^{-2\beta}u^4\,dx
                       +O_{C^2}(\alpha^2).
\end{equation}
In particular $\norm{F_\alpha-F_0}_{C^2}=O(|\alpha|)$ on the unit ball.
The map $T_\alpha$ defined by
\begin{equation}\label{eq:Talpha}
 \ip{T_\alpha(u)}{h}=\int_\Omega |x|^{-2\beta}u e^{\alpha u^2}h\,dx
\end{equation}
is smooth near $(0,\phi)$ and agrees with \eqref{eq:T} when $\alpha=0$.
\end{lemma}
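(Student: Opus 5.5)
The plan is to write $F_\alpha$ as a power series in $\alpha u^2$ and to control the series through a uniform exponential bound on balls. Since
\[
 \frac1\alpha\bigl(e^{\alpha u^2}-1\bigr)=\sum_{k\ge1}\frac{\alpha^{k-1}}{k!}u^{2k},
\]
we have $F_\alpha(u)=\sum_{k\ge1}\alpha^{k-1}P_k(u)/k!$, where $P_k(u)=\int_\Omega|x|^{-2\beta}u^{2k}\,dx$ is a continuous homogeneous polynomial of degree $2k$ on $H_0^1(\Omega)$. The series is consistent with the definition at $\alpha=0$, where only $P_1=F_0$ survives. The key quantitative input is a bound of the form
\[
 \sup_{\norm{u}\le1}\|P_k\|\le C\,k!\,\gamma^{k}
\]
on the $2k$-linear form, for some fixed $\gamma$. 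This gives the series a positive radius of convergence in $(\alpha,u)$ jointly, so it defines a real analytic, and in particular smooth, function on $\{|\alpha|<\varepsilon_R,\ \norm{u}<R+1\}$ once $\varepsilon_R(R+1)^2\gamma<1$.

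To obtain this bound, I would first reduce the symmetric $2k$-linear form $\int|x|^{-2\beta}u_1\cdots u_{2k}$ to the diagonal by Hölder: its absolute value is at most $\prod_i\bigl(\int|x|^{-2\beta}|u_i|^{2k}\bigr)^{1/2k}$. It then suffices to bound $\int|x|^{-2\beta}|u|^{2k}$ for $\norm{u}\le1$. For this I use the singular Moser--Trudinger inequality of Adimurthi--Sandeep \cite{AS} at a fixed subcritical exponent $a=2\pi(1-\beta)$. Since $e^{at}\ge (at)^k/k!$, we get
\[
 \int_\Omega|x|^{-2\beta}u^{2k}\,dx\le k!\,a^{-k}S_a .
\]
This is exactly the factorial-times-geometric bound with $\gamma=1/a$. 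An alternative route is the Hölder–Sobolev estimate from \cref{lem:weighted} combined with the sharp growth $\norm{u}_{L^q}\lesssim\sqrt q$, which gives the same factorial growth. Homogeneity then yields $\|P_k\|_{\norm{u}\le R+1}\le C\,k!\,(R+1)^{2k}a^{-k}$, and the Taylor series converges absolutely together with all its derivatives in $u$ and $\alpha$ when $|\alpha|(R+1)^2<a$. Polarization constants of the form $(2k)^{2k}/(2k)!\le e^{2k}$ only shrink the radius by a fixed factor, so I can take $\varepsilon_R=a/(e^2(R+1)^2)$ or similar.

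The expansion \eqref{eq:C2-exp} is then the tail estimate: $F_\alpha-F_0-\tfrac\alpha2P_2=\alpha^2\sum_{k\ge3}\alpha^{k-3}P_k/k!$. The $C^2$ norms of $P_k$ on $\{\norm{u}\le R\}$ are bounded by $(2k)^2\|P_k\|R^{2k-2}$, so the sum is $O(\alpha^2)$ uniformly for $|\alpha|\le\varepsilon_R/2$. In particular $\norm{F_\alpha-F_0}_{C^2}=O(|\alpha|)$ on the unit ball. For $T_\alpha$, note that $\ip{T_\alpha(u)}{h}=\tfrac12 DF_\alpha(u)h$ by differentiating under the integral. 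This is justified by the dominated bounds above, since the derivative of the series is the termwise series. Hence $T_\alpha(u)=\tfrac12\nabla F_\alpha(u)$ by Riesz representation, and it is analytic wherever $F_\alpha$ is. At $\alpha=0$ it reduces to $\tfrac12\nabla F_0=T_0$ by \eqref{eq:T}.

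I expect the main obstacle to be the uniform factorial bound for the singular weight on the full ball $\norm{u}\le R+1$. This is why the exponent must be subcritical and rescaled: the constraint $|\alpha|(R+1)^2<4\pi(1-\beta)$ is what forces $\varepsilon_R$ to depend on $R$. Once that bound is secured, the remainder is routine bookkeeping for analytic maps between Banach spaces.
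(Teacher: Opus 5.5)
Your proposal is correct, but it reaches the lemma by a different route than the paper.

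\textbf{The paper's route.} It works on a slightly larger ball $\norm{u}\le R_1$ with $R_1>R+1$, fixes $\eta$ with $\eta R_1^2<4\pi(1-\beta)$, and Taylor-expands in $\alpha$ the integrand and its first two Fr\'echet derivatives. It bounds the remainders by absorbing the polynomial factors into an exponential with a slightly larger coefficient, still below the threshold. The directions $h_j$ are handled by H\"older's inequality, and the same bounds at every derivative order give smoothness.

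\textbf{Your route.} You expand $F_\alpha=\sum_{k\ge1}\alpha^{k-1}P_k/k!$ once and for all. You convert the Moser--Trudinger bound into the factorial coefficient estimate $\int_\Omega|x|^{-2\beta}u^{2k}\,dx\le k!\,a^{-k}S_a\norm{u}^{2k}$, and H\"older then transfers this to the full $2k$-linear forms.

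\textbf{What each buys.} Your route gives more than the lemma asks for: joint real analyticity of $(\alpha,u)\mapsto F_\alpha(u)$ on $\{|\alpha|\norm{u}^2<a\}$. In it, \eqref{eq:C2-exp} becomes a tail estimate, and the identity $DF_\alpha(u)[h]=2\ip{T_\alpha(u)}{h}$ follows by termwise differentiation. The paper needs this analyticity later, in \cref{lem:chart} and in the analytic implicit function arguments, and proves it there separately by complexification. The cost is reliance on the standard theory of power series in Banach spaces, including re-expansion about points away from the origin. The paper's argument uses only elementary Taylor calculus with integral remainders.

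\textbf{A simplification.} Your H\"older step already bounds the symmetric multilinear form $\int_\Omega|x|^{-2\beta}u_1\cdots u_{2k}\,dx$ directly, not just its diagonal. So no polarization constant enters, and any $\varepsilon_R$ with $\varepsilon_R(R+1)^2<a$ works. With that choice, $|\alpha|R^2/a<R^2/(R+1)^2<1$ for all $|\alpha|<\varepsilon_R$. Hence the $O_{C^2}(\alpha^2)$ bound is uniform on the whole interval, not only for $|\alpha|\le\varepsilon_R/2$.
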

\begin{proof}
Choose a larger radius $R_1>R+1$ and $\eta>0$ such that
$\eta R_1^2<4\pi(1-\beta)$. The singular Moser-Trudinger inequality gives
\[
 \sup_{\norm{u}\le R_1}\int_\Omega |x|^{-2\beta}e^{\eta u^2}\,dx<\infty.
\]
Absorb polynomial factors into an exponential with a slightly larger
coefficient below this threshold, and estimate products of directions
$h_j\in H_0^1(\Omega)$ by H\"older's inequality for the finite measure
$|x|^{-2\beta}\,dx$ and \cref{lem:weighted}. Taylor expansion, also
after one and two Fr\'echet derivatives, gives \eqref{eq:C2-exp}, and
the same bounds for any fixed number of derivatives give smoothness.
The last assertion follows from $DF_\alpha(u)[h]=2\ip{T_\alpha(u)}{h}$.
\end{proof}

\begin{lemma}\label{lem:subcompact}
For each $0<\alpha<4\pi(1-\beta)$, every maximizing sequence for
\eqref{eq:problem} has a strongly convergent subsequence in $H_0^1(\Omega)$.
Every maximizer has norm one, is bounded and continuous on
$\overline\Omega$, and has one sign. A nonnegative maximizer is positive
in $\Omega$ and satisfies \eqref{eq:EL}.
\end{lemma}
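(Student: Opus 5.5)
The plan is the standard subcritical compactness argument, followed by a symmetrization and regularity step. Let $(u_k)$ be a maximizing sequence with $\norm{u_k}\le1$. Passing to a subsequence, $u_k\rightharpoonup u$ weakly in $H_0^1(\Omega)$ and, by \cref{lem:weighted}, $u_k\to u$ in $L^q(\Omega,|x|^{-2\beta}dx)$ for every finite $q$ and almost everywhere.

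\emph{Convergence of $J_\alpha$.} Since $\alpha<4\pi(1-\beta)$, pick $p>1$ with $p\alpha<4\pi(1-\beta)$. The singular Moser--Trudinger inequality of \cite{AS} gives
\[
 \sup_k\int_\Omega|x|^{-2\beta}e^{p\alpha u_k^2}\,dx<\infty .
\]
Hence $e^{\alpha u_k^2}$ is uniformly integrable with respect to the finite measure $|x|^{-2\beta}dx$. Vitali's theorem then yields $J_\alpha(u_k)\to J_\alpha(u)$, so $u$ is a maximizer with $\norm{u}\le1$.

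\emph{Norm one and strong convergence.} If $0<\norm{u}<1$, then $J_\alpha(u/\norm{u})>J_\alpha(u)$ because $e^{\alpha t^2}$ is strictly increasing in $|t|$ and $u\ne0$ on a set of positive measure. This contradicts maximality. The case $u=0$ gives $J_\alpha(0)=\int|x|^{-2\beta}<J_\alpha(\phi)$, again a contradiction. So $\norm{u}=1$. Combined with $\norm{u_k}\le1$ and weak lower semicontinuity, this gives $\norm{u_k}\to\norm{u}$, and weak plus norm convergence gives strong convergence. The same rescaling argument shows that every maximizer has norm one.

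\emph{Regularity.} A maximizer $u$ is a constrained critical point, so Lagrange multipliers give
\[
 -\Delta u=\lambda|x|^{-2\beta}ue^{\alpha u^2}.
\]
Testing with $u$ and using $\norm u=1$ identifies $\lambda$ as in \eqref{eq:EL}, with $\lambda>0$. The right side lies in $L^{s}$ for some $s>1$: $|x|^{-2\beta}\in L^{s_0}$ with $\beta s_0<1$, and $ue^{\alpha u^2}\in L^t$ for all finite $t$ by Moser--Trudinger. Elliptic $W^{2,s}$ regularity on the $C^2$ domain then gives $u\in W^{2,s}\subset C(\overline\Omega)$, so $u$ is bounded and continuous.

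\emph{One sign.} I expect this to be the main step. Note that $|u|$ is also a maximizer, since $\norm{|u|}=\norm{u}$ and $J_\alpha(|u|)=J_\alpha(u)$. So $|u|$ is a nonnegative solution of the same equation. Because $|u|$ is bounded, the coefficient $c(x)=\lambda|x|^{-2\beta}e^{\alpha u^2}\ge0$ is in $L^{s}$ with $s>1$, and $|u|$ is a nonnegative weak supersolution of $-\Delta w\ge 0$. The strong maximum principle for $W^{2,s}$ supersolutions, or the weak Harnack inequality, gives $|u|>0$ in the connected set $\Omega$. Since $|u|>0$, $u$ never vanishes in $\Omega$, and by continuity and connectedness $u$ has one sign. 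A nonnegative maximizer is therefore positive and solves \eqref{eq:EL}.

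The only delicate point is applying the maximum principle with the singular weight. It is handled because the coefficient belongs to $L^s$ with $s>1$, which is exactly the setting in the proof of \cref{lem:weighted}.
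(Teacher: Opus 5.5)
Your proposal is correct and follows essentially the paper's route: Vitali convergence via an exponent with $p\alpha<4\pi(1-\beta)$, rescaling to force norm one and hence strong convergence, the Lagrange multiplier rule with $W^{2,s}$ regularity using $|x|^{-2\beta}\in L^{s_0}$, and the strong maximum principle applied to the maximizer $|u|$. The only step the paper treats more explicitly is the $C^1$ differentiability of $J_\alpha$ near a maximizer, which the multiplier rule needs; the paper obtains it by splitting $u=b+r$ with $b\in C_c^\infty(\Omega)$, while in the subcritical range you may take it as standard.
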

\begin{proof}
After passing to a subsequence, let $u_j\rightharpoonup u$ in
$H_0^1(\Omega)$ and almost everywhere, with $\norm{u_j}\le1$.
Choose $q>1$ with $q\alpha<4\pi(1-\beta)$, so that
$e^{\alpha u_j^2}$ is bounded in $L^q(|x|^{-2\beta}\,dx)$ and
Vitali's theorem gives convergence of the weighted exponential integrals.
For a maximizing sequence, the limit is a nonzero maximizer because
$S_\alpha>\int_\Omega |x|^{-2\beta}\,dx$, and rescaling forces its
norm to be one, giving strong convergence.

The following regularity argument also applies at criticality.
For fixed $u\in H_0^1(\Omega)$ and finite $A,q>0$, write $u=b+r$
with $b\in C_c^\infty(\Omega)$ and $\norm{r}$ arbitrarily small. The estimate
\[
 (b+r+h)^2\le2\norm{b}_\infty^2+2(r+h)^2
\]
and the Moser-Trudinger inequality show that $e^{A(u+h)^2}$ is bounded
in $L^q(\Omega)$ whenever $\norm{h}$ is sufficiently small. The
singular inequality gives the same conclusion with respect to
$|x|^{-2\beta}\,dx$. In particular, the functional is continuously
differentiable near each fixed $u$, with derivative
\[
 2\alpha\int_\Omega |x|^{-2\beta}u e^{\alpha u^2}h\,dx.
\]
The Lagrange multiplier rule on the unit sphere and testing the
resulting equation against $u$ therefore give \eqref{eq:EL}.
Since $e^{Au^2}$ belongs to every finite unweighted $L^q$ space and
$|x|^{-2\beta}\in L^s$ for some $s>1$, we have
$|x|^{-2\beta}u e^{\alpha u^2}\in L^p(\Omega)$ for some $p>1$.
Elliptic regularity gives $u\in W^{2,p}(\Omega)$, hence boundedness
and continuity on $\overline\Omega$.

Since $|u|$ is also a maximizer, it satisfies \eqref{eq:EL} and is
positive by the strong maximum principle, so continuity and connectedness
force $u$ to have one sign.
\end{proof}

\begin{lemma}\label{lem:limit}
If $\alpha_j\downarrow0$ and $u_j\in\M_{\alpha_j}$, then
$\dist(u_j,\{\phi,-\phi\})\to0$ in $H_0^1(\Omega)$. Their multipliers satisfy
$\lambda_j\to\mu_1$.
\end{lemma}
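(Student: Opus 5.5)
The plan is to compare maximizers of $J_{\alpha_j}$ with those of the normalized functional $F_{\alpha_j}$ from \eqref{eq:F}. Since $J_\alpha(u)=\int_\Omega|x|^{-2\beta}\,dx+\alpha F_\alpha(u)$, the set $\M_\alpha$ is exactly the set of maximizers of $F_\alpha$ on the closed unit ball. By \cref{lem:calculus}, $F_{\alpha_j}\to F_0$ uniformly on the unit ball, with error $O(\alpha_j)$.

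\textbf{Near-maximality for $F_0$.} Let $u_j\in\M_{\alpha_j}$. By \cref{lem:subcompact}, $\norm{u_j}=1$. Comparing $u_j$ with the competitor $\phi$ gives
\[
 F_0(u_j)\ge F_{\alpha_j}(u_j)-C\alpha_j\ge F_{\alpha_j}(\phi)-C\alpha_j\ge F_0(\phi)-2C\alpha_j=\mu_1^{-1}-2C\alpha_j .
\]
Here $F_0(u)=\ip{T_0u}{u}$. By \cref{lem:weighted}, $F_0(u)\le\mu_1^{-1}$ on the unit sphere.

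\textbf{Spectral gap.} Write $u_j=a_j\phi+w_j$ with $w_j\perp\phi$, so that $a_j^2+\norm{w_j}^2=1$. Since $T_0$ is self-adjoint and $T_0\phi=\mu_1^{-1}\phi$, we get $\ip{T_0w_j}{\phi}=0$ and $\ip{T_0w_j}{w_j}\le\mu_2^{-1}\norm{w_j}^2$. Therefore
\[
 F_0(u_j)\le \mu_1^{-1}a_j^2+\mu_2^{-1}\norm{w_j}^2=\mu_1^{-1}-(\mu_1^{-1}-\mu_2^{-1})\norm{w_j}^2 .
\]
Combined with the previous bound, this yields $\norm{w_j}^2\le 2C\alpha_j/(\mu_1^{-1}-\mu_2^{-1})\to0$. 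Hence $|a_j|\to1$. Choosing $\sigma_j=\operatorname{sign}a_j$, we have $\norm{u_j-\sigma_j\phi}^2=(1-|a_j|)^2+\norm{w_j}^2\to0$, which is the distance statement. Alternatively, one could argue by weak compactness and the maximization characterization of $\phi$, but the quantitative spectral-gap route is cleaner and even gives a rate $O(\sqrt{\alpha_j})$.

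\textbf{Multipliers.} From \eqref{eq:EL}, $\lambda_j^{-1}=\int_\Omega|x|^{-2\beta}u_j^2e^{\alpha_ju_j^2}\,dx$. Strong convergence $u_j\to\pm\phi$ in $H_0^1$ gives convergence in every $L^q(|x|^{-2\beta}dx)$ by \cref{lem:weighted}. The factor $e^{\alpha_ju_j^2}$ is bounded in $L^q(|x|^{-2\beta}dx)$ for some $q>1$, since $\alpha_j\to0$ is far below the critical threshold. Also $e^{\alpha_ju_j^2}-1\le\alpha_ju_j^2e^{\alpha_ju_j^2}$, and by Hölder this tends to $0$ in $L^{r}(|x|^{-2\beta}dx)$ for some $r>1$. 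Hence $\lambda_j^{-1}\to\int_\Omega|x|^{-2\beta}\phi^2\,dx=\mu_1^{-1}$, that is, $\lambda_j\to\mu_1$.

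The only delicate point is the uniform $C^0$ estimate $|F_\alpha-F_0|\le C\alpha$ on the unit ball. This follows from \eqref{eq:C2-exp} in \cref{lem:calculus}, with the exponential integrability supplied by the singular Moser–Trudinger inequality, so no real obstacle is expected.
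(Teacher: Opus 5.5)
Your proposal is correct and follows essentially the same route as the paper: you compare with $\phi$ using the $O(\alpha)$ closeness of $F_\alpha$ to $F_0$ from \cref{lem:calculus}, apply the spectral-gap bound to the decomposition $u_j=t_j\phi+z_j$, and obtain $\lambda_j\to\mu_1$ from uniform exponential integrability. Your version makes the paper's terse steps explicit, including the rate $\dist(u_j,\{\phi,-\phi\})=O(\sqrt{\alpha_j})$ and the H\"older estimate behind the convergence of the multipliers.
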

\begin{proof}
By \cref{lem:calculus} and maximality,
\[
 \int_\Omega |x|^{-2\beta}u_j^2\,dx\ge\int_\Omega |x|^{-2\beta}\phi^2\,dx-O(\alpha_j)
                       =\mu_1^{-1}-O(\alpha_j).
\]
Write $u_j=t_j\phi+z_j$ with $z_j\perp\phi$. Since $\norm{u_j}=1$,
\[
 \int_\Omega |x|^{-2\beta}u_j^2\,dx
 \le\frac{t_j^2}{\mu_1}+\frac{1-t_j^2}{\mu_2}.
\]
It follows that $t_j^2\to1$ and $z_j\to0$. Finally,
$\lambda_j^{-1}=\int_\Omega |x|^{-2\beta}u_j^2e^{\alpha_j u_j^2}\,dx\to\mu_1^{-1}$ by the
uniform exponential estimates.
\end{proof}

\subsection{Uniqueness and asymptotic expansions}\label{sec:branch}

\begin{proof}[Proof of \cref{thm:unique}]
Consider the map, with values in $H_0^1(\Omega)\times\R$,
\begin{equation}\label{eq:IFTmap}
 (\alpha,u,\lambda)\longmapsto
       \bigl(u-\lambda T_\alpha(u),\ \norm{u}^2-1\bigr).
\end{equation}
This map is smooth and vanishes at $(0,\phi,\mu_1)$, where its
derivative in $(u,\lambda)$ is
\begin{equation}\label{eq:IFTderivative}
 (h,\ell)\longmapsto
 \left((\Id-\mu_1T_0)h-\frac{\ell}{\mu_1}\phi,
                                      2\ip{\phi}{h}\right).
\end{equation}
This is an isomorphism because the second component determines the
$\phi$-component of $h$, projection of the first onto $\phi$ determines
$\ell$, and $\Id-\mu_1T_0\ge(1-\mu_1/\mu_2)\Id>0$ on $\phi^\perp$.

The implicit function theorem gives a unique smooth family near
$(\phi,\mu_1)$ and, by sign change, near $(-\phi,\mu_1)$.
By \cref{lem:limit}, these neighborhoods contain all maximizers for
small $\alpha$, so existence and symmetry give exactly the pair
$\pm u_\alpha$, with $u_\alpha>0$.

Differentiating \eqref{eq:EL} at $\alpha=0$, with
$\psi=\partial_\alpha u_\alpha|_{0}$, gives
\[
 (-\Delta-\mu_1|x|^{-2\beta})\psi
 =\lambda'_0 |x|^{-2\beta}\phi+\mu_1|x|^{-2\beta}\phi^3,
 \qquad \ip{\psi}{\phi}=0.
\]
Testing against $\phi$ gives
$\lambda'_0/\mu_1+\mu_1\int_\Omega |x|^{-2\beta}\phi^4\,dx=0$, hence
$\lambda'_0=-\mu_1^2\int_\Omega |x|^{-2\beta}\phi^4\,dx$. This gives \eqref{eq:branch-exp}, \eqref{eq:lambda-exp} and
\eqref{eq:psi}, whose right-hand side is orthogonal to $\phi$,
so the spectral gap gives a unique solution in $\phi^\perp$.

Finally,
\[
 S_\alpha=\int_\Omega |x|^{-2\beta}
       \left(1+\alpha u_\alpha^2+\frac{\alpha^2}{2}u_\alpha^4\right)\,dx
       +O(\alpha^3).
\]
Since $\int_\Omega |x|^{-2\beta}\phi\psi\,dx=\mu_1^{-1}\ip{\phi}{\psi}=0$,
the weighted quadratic integral equals $\mu_1^{-1}+O(\alpha^2)$,
and the weighted quartic integral equals
$\int_\Omega |x|^{-2\beta}\phi^4\,dx+O(\alpha)$.
This proves \eqref{eq:sup-exp}.
\end{proof}

\section{Global stability and the sharp small-parameter constant}
\label{sec:stability}

We prove \cref{thm:sharp} through \cref{prop:abstract}, starting
with a comparison of two deficits.

\begin{lemma}\label{lem:abstract-transfer}
Let $F,G$ be even $C^2$ functions on a neighborhood of the closed unit
ball in a real Hilbert space. Suppose that $v$ is a unit constrained
critical point of both functions. Then, for every unit $u$,
\begin{equation}\label{eq:abstract-transfer}
 \bigl|[F(v)-F(u)]-[G(v)-G(u)]\bigr|
 \le\frac12\left(\norm{D(F-G)(v)}
            +\sup_{B_1}\norm{D^2(F-G)}\right)
           \min_{\sigma=\pm1}\norm{u-\sigma v}^2.
\end{equation}
\end{lemma}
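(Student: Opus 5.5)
Set $H=F-G$. Then $H$ is even, of class $C^2$ near the closed unit ball, and $v$ is a unit constrained critical point of $H$. The left side of \eqref{eq:abstract-transfer} is exactly $|H(v)-H(u)|$. Since $H$ is even, $H(u)=H(-u)$, and $-v$ is also a unit constrained critical point. So it suffices to prove
\[
 |H(v)-H(u)|\le\tfrac12\bigl(\norm{DH(v)}+\sup_{B_1}\norm{D^2H}\bigr)\norm{u-v}^2
\]
for every unit $u$, and then replace $v$ by $\sigma v$ and take the better sign.

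Put $h=u-v$. The segment $v+th$, $t\in[0,1]$, lies in the closed unit ball by convexity, so Taylor's formula with integral remainder gives
\[
 H(u)-H(v)=DH(v)[h]+\int_0^1(1-t)\,D^2H(v+th)[h,h]\,dt .
\]
The remainder is bounded in absolute value by $\tfrac12\sup_{B_1}\norm{D^2H}\,\norm{h}^2$.

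For the first-order term we use both the constraint and criticality. Constrained criticality of $F$ and $G$ at $v$ on the unit sphere means $DF(v)$ and $DG(v)$ vanish on $v^\perp$. Hence $DH(v)[k]=\ip{DH(v)}{v}\ip{v}{k}$, and in particular $|DH(v)[k]|\le\norm{DH(v)}\,|\ip{v}{k}|$. Because $\norm{u}=\norm{v}=1$,
\[
 \norm{h}^2=2-2\ip{u}{v}=-2\ip{v}{h},
\]
so $|\ip{v}{h}|=\tfrac12\norm{h}^2$. Therefore $|DH(v)[h]|\le\tfrac12\norm{DH(v)}\norm{h}^2$. Adding the two bounds gives the estimate with constant $\tfrac12(\norm{DH(v)}+\sup_{B_1}\norm{D^2H})$, and minimizing over $\sigma=\pm1$ finishes the proof.

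The only delicate point is this first-order step. The gradient of $H$ at $v$ need not vanish, only its tangential part. The identity $\ip{v}{u-v}=-\tfrac12\norm{u-v}^2$ for unit vectors converts the normal component into a quadratic quantity, which is why the lemma is stated for unit $u$.
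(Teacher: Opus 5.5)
Your proof is correct and is the paper's argument: set $H=F-G$, use constrained criticality to make $DH(v)$ parallel to $v$, so that the identity $\ip{v}{u-v}=-\tfrac12\norm{u-v}^2$ turns the first-order Taylor term into a quadratic one, bound the second-order remainder along the segment, and apply evenness at $-v$. You have written out details the paper leaves implicit, namely the integral remainder with $\int_0^1(1-t)\,dt=\tfrac12$ and the reason $-v$ is also a constrained critical point with $\norm{DH(-v)}=\norm{DH(v)}$.
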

\begin{proof}
For $H=F-G$, constrained criticality gives $DH(v)=b\,v$ with
$b\in\R$, while $\ip{v}{u-v}=-\norm{u-v}^2/2$ for unit $u$.
Taylor's formula on the line segment from $v$ to $u$
therefore bounds $|H(u)-H(v)|$ by the right side of
\eqref{eq:abstract-transfer} with $\norm{u-v}^2$ in place of the
minimum. Apply the same argument at $-v$, using evenness.
\end{proof}

The following proposition yields \eqref{eq:second-best}.
All norms are Hilbert norms, and the $C^2$ norm includes the function
and its first two Fr\'echet derivatives.

\begin{proposition}\label{prop:abstract}
Let $H$ be a real Hilbert space and let $T$ be a compact, nonnegative,
self-adjoint operator. Suppose that its largest eigenvalue $t_1$ is
simple and that its next distinct eigenvalue satisfies $0<t_2<t_1$.
Let $\phi$ be a unit $t_1$-eigenvector, put $E_2=\ker(T-t_2\Id)$,
and set $F_0(u)=\ip{Tu}{u}$. On a fixed ball $B_R\subset H$ with
$R>1$, suppose that the even functionals $F_\varepsilon$ satisfy
\begin{equation}\label{eq:abstract-assumption}
 F_\varepsilon=F_0+\varepsilon P+O_{C^2(B_R)}(\varepsilon^2),
 \qquad \varepsilon\downarrow0,
\end{equation}
where $P$ is even and has bounded $C^2(B_R)$ norm. Assume that each
$F_\varepsilon$ attains its maximum on the unit sphere.
For all sufficiently small $\varepsilon>0$ its maximizers are
exactly $\{v_\varepsilon,-v_\varepsilon\}$, with
$\norm{v_\varepsilon-\phi}=O(\varepsilon)$ after choosing the sign.
Define its sharp sphere constant by
\[
 c_\varepsilon=
 \inf_{\substack{\norm{u}=1\\u\ne\pm v_\varepsilon}}
 \frac{F_\varepsilon(v_\varepsilon)-F_\varepsilon(u)}
      {\min_{\sigma=\pm1}\norm{u-\sigma v_\varepsilon}^2}.
\]
Then
\begin{equation}\label{eq:abstract-expansion}
 c_\varepsilon=\frac{t_1-t_2}{2}
  +\frac{\varepsilon}{2}
     \left(P(\phi)-\max_{\substack{w\in E_2\\\norm{w}=1}}P(w)\right)
  +O(\varepsilon^2).
\end{equation}
Moreover,
\begin{equation}\label{eq:abstract-exact}
 c_\varepsilon=\frac12\left(F_\varepsilon(v_\varepsilon)-
       \sup_{\substack{\norm{w}=1\\w\perp v_\varepsilon}}F_\varepsilon(w)\right),
\end{equation}
and every unit vector attaining $c_\varepsilon$ is orthogonal to
$v_\varepsilon$. The least local quadratic coefficient at $v_\varepsilon$ is
$t_1-t_2+O(\varepsilon)$. In particular the global-to-local ratio
tends to $1/2$.
\end{proposition}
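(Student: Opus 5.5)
We argue in four stages. The maximizers come first, then the exact reduction \eqref{eq:abstract-exact}, then its expansion, and finally the local coefficient.

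\emph{Maximizers.} On the unit sphere, every maximizer $u$ satisfies $F_\varepsilon(u)\ge F_\varepsilon(\phi)=t_1-O(\varepsilon)$. Writing $u=a\phi+z$ with $z\perp\phi$ gives $F_0(u)\le t_1a^2+t_2(1-a^2)$, so $\norm{z}^2=O(\varepsilon)$. To get uniqueness and the rate $O(\varepsilon)$, we run the implicit function theorem exactly as in the proof of \cref{thm:unique}. The map is $(\varepsilon,u,\lambda)\mapsto(DF_\varepsilon(u)-2\lambda u,\norm{u}^2-1)$. Its linearization at $(0,\phi,t_1)$ is invertible by the spectral gap $t_1-t_2$. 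We need a $C^1$-in-$\varepsilon$ structure here. If only the stated $C^2$ expansion is available, we instead use the quantitative argument: $D^2F_\varepsilon$ is uniformly close to $2T$, so the constrained critical point near $\pm\phi$ is unique by strict concavity of $F_\varepsilon$ restricted to the sphere chart near $\phi$. The distance is $O(\varepsilon)$ because $DF_\varepsilon(\phi)-2t_1\phi=O(\varepsilon)$.

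\emph{Exact reduction.} Fix a unit $u$ and write $u=\cos\theta\, v+\sin\theta\, w$ with $w\perp v=v_\varepsilon$ unit and $\theta\in[0,\pi]$. Then $\min_\sigma\norm{u-\sigma v}^2=2(1-|\cos\theta|)$. The key claim is that for small $\varepsilon$ and every such $w$, the one-variable function
\[
g(\theta)=F_\varepsilon(v)-F_\varepsilon(\cos\theta\, v+\sin\theta\, w)-\tfrac12\bigl(F_\varepsilon(v)-F_\varepsilon(w)\bigr)\cdot 2(1-|\cos\theta|)
\]
is nonnegative, and vanishes for $\theta\in(0,\pi/2)$ only in degenerate cases. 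For $\varepsilon=0$ we have $F_0(\cos\theta v+\sin\theta w)=\cos^2\theta\, t_1+\sin^2\theta\, F_0(w)+2\sin\theta\cos\theta\ip{Tv}{w}$. The cross term vanishes since $Tv=t_1v$. Then $g=(t_1-F_0(w))\bigl(\sin^2\theta-(1-\cos\theta)\bigr)=(t_1-F_0(w))\cos\theta(1-\cos\theta)\ge0$, with equality only at $\theta=\pi/2$ (and at $0$, which is excluded). For $\varepsilon>0$, the cross term $\ip{DF_\varepsilon(v)}{w}$ vanishes by constrained criticality. Evenness handles $\theta>\pi/2$. The remaining $\varepsilon$-perturbation of the quadratic structure is $O(\varepsilon)$ in $C^2$. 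We compare $g$ with $c\cos\theta(1-\cos\theta)$ via Taylor expansion in $\theta$ near $0$, using \cref{lem:abstract-transfer}-type bounds, and near $\pi/2$. Since $t_1-F_0(w)\ge t_1-t_2>0$, the perturbation is absorbed. This yields \eqref{eq:abstract-exact} and the orthogonality of attaining vectors.

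\emph{Main obstacle.} The delicate step is this uniform absorption near $\theta=\pi/2$. There the leading term $\cos\theta(1-\cos\theta)$ vanishes only to first order, while the error must also vanish there. We handle it by writing $g(\theta)-g(\pi/2)$ and noting that $g(\pi/2)=0$ exactly, by construction. Differentiating in $\theta$ then gives $g'(\pi/2)$ equal to $-(t_1-F_0(w))+O(\varepsilon)$, up to sign conventions, with $g''$ bounded uniformly in $w$ by the $C^2$ bound. Hence $g\ge0$ in a fixed neighborhood. Near $\theta=0$ we use second-order Taylor expansion and the constrained Hessian gap $t_1-t_2+O(\varepsilon)$. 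This same Hessian computation gives the least local coefficient: the constrained Hessian of $-F_\varepsilon$ on $v^\perp$ is $2(t_1I-T)+O(\varepsilon)$, so the local coefficient of $\norm{h}^2$ is $t_1-t_2+O(\varepsilon)$, and the ratio tends to $1/2$.

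\emph{Expansion.} By \eqref{eq:abstract-exact}, $c_\varepsilon=\frac12\bigl(F_\varepsilon(v_\varepsilon)-\sup_{w\perp v_\varepsilon}F_\varepsilon(w)\bigr)$. We have $F_\varepsilon(v_\varepsilon)=t_1+\varepsilon P(\phi)+O(\varepsilon^2)$, since $v_\varepsilon-\phi=O(\varepsilon)$ and $\phi$ is critical for $F_0$. For the sup, the constraint $w\perp v_\varepsilon$ differs from $w\perp\phi$ by a rotation $R_\varepsilon=I+O(\varepsilon)$ mapping $\phi^\perp$ onto $v_\varepsilon^\perp$, so the sup equals $\sup_{w\perp\phi}F_\varepsilon(R_\varepsilon w)$. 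On $\phi^\perp$, $F_0$ has maximum $t_2$ on $E_2$ with gap $t_2-t_3$ transversally, so near-maximizers lie within $O(\sqrt\varepsilon)$ of $E_2$. Moreover $F_0(R_\varepsilon w)=F_0(w)+O(\varepsilon)\,\mathrm{dist}(w,E_2)+O(\varepsilon^2)$, since at $w\in E_2$ the gradient of $F_0$ is parallel to $w$, which is orthogonal to the rotation direction $R_\varepsilon w-w$ to first order. A standard upper/lower bound argument then gives $\sup=t_2+\varepsilon\max_{E_2}P+O(\varepsilon^2)$, with the mixed term absorbed by Young's inequality against the gap. This yields \eqref{eq:abstract-expansion}.
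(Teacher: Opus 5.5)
Your proposal is correct. Its first and last stages match the paper: you localize the maximizers by strict concavity in the sphere chart, and you read off the local coefficient from the constrained Hessian $2(t_1\Id-T)+O(\varepsilon)$. The two central steps, however, take a different route.

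\textbf{Exact formula.} For \eqref{eq:abstract-exact} the paper rotates $v_\varepsilon$ to $\phi$ and derives the uniform comparison $q_\varepsilon=q_0+O(\varepsilon)$ from \cref{lem:abstract-transfer}. It then splits in $t=\ip{u}{\phi}$. For $t\le1/2$ it uses the monotonicity $\partial_tq_\varepsilon\ge(t_1-t_2)/4$. For $t\ge1/2$ it uses the crude bound $q_\varepsilon\ge\frac34(t_1-t_2)-O(\varepsilon)$, compared with the value $\frac12(t_1-t_2)+O(\varepsilon)$ at $t=0$.

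You instead stay at $v_\varepsilon$ and work on each arc $\cos\theta\,v_\varepsilon+\sin\theta\,w$. Your three regional estimates can be packaged as follows. The function $g$ minus the model $(F_\varepsilon(v_\varepsilon)-F_\varepsilon(w))\cos\theta(1-\cos\theta)$ is $O(\varepsilon)$ in $C^2[0,\pi/2]$. It vanishes at $0$ together with its derivative, and it vanishes at $\pi/2$. It is therefore bounded by $C\varepsilon\,\theta^2(\pi/2-\theta)$, which the model dominates.

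This yields a stronger ray-wise fact: on every such arc the quotient is strictly minimized at the orthogonal endpoint $w$. That strictness, rather than merely $g\ge0$, is what gives orthogonality of attaining vectors, and your argument does provide it. One small correction: for $w\perp v_\varepsilon$ the inequality $t_1-F_0(w)\ge t_1-t_2$ holds only up to $O(\varepsilon^2)$, which is harmless.

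\textbf{Expansion.} For \eqref{eq:abstract-expansion} the paper works on the whole sphere. It uses the split $u=t\phi+z+r$ and \eqref{eq:abstract-split}, the expansion $F_\varepsilon\circ R_\varepsilon=F_0+\varepsilon H+O(\varepsilon^2)$ built from the branch correction $\psi$, and a completed square. You deduce the expansion from the exact formula instead. Only a perturbed maximization on the unit sphere of $\phi^\perp$ then remains, and you need just $\norm{v_\varepsilon-\phi}=O(\varepsilon)$, not $\psi$.

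Your rotation estimate can in fact be sharpened to $F_0(R_\varepsilon w)=F_0(w)+O(\varepsilon^2)$ for every $w\perp\phi$. This holds because $(R_\varepsilon-\Id)w$ is a multiple of $\phi$ up to $O(\varepsilon^2)$ and $\ip{Tw}{\phi}=0$. Young's inequality is then needed only for the Lipschitz error in $P$.

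\textbf{Comparison.} Your route is shorter for the proposition itself. The paper's route produces, as by-products, the uniform full-sphere comparison \eqref{eq:abstract-uniform} and the localization $t=O(\varepsilon)$, $\norm{r}=O(\sqrt\varepsilon)$ of near-minimizers. It reuses both in the proof of \cref{thm:attainment}, although your exact-formula argument would give the same control there.
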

\begin{proof}
Write $g=t_1-t_2>0$. If $v$ is any maximizer, comparison with
$\phi$ and \eqref{eq:abstract-assumption} gives
$t_1-\ip{Tv}{v}=O(\varepsilon)$. The spectral gap implies
$1-|\ip{v}{\phi}|^2=O(\varepsilon)$, so all maximizers lie near
$\phi$ or $-\phi$.

Write $u=(\phi+x)/(1+\norm{x}^2)^{1/2}$ with $x\perp\phi$.
In these coordinates, the Hessian of $F_0$ at zero is
$2(T-t_1\Id)\le-2g\Id$, so it remains uniformly negative on a
fixed small ball. By $C^2$ convergence, $F_\varepsilon$ is strictly
concave there and has at most one critical point. Since all maximizers
lie near $\pm\phi$, existence and evenness give exactly the pair
$\pm v_\varepsilon$.
Integrating the uniform Hessian bound from $x=0$, where the gradient
is $O(\varepsilon)$, to the critical point gives
$\norm{v_\varepsilon-\phi}=O(\varepsilon)$.

With $P_\perp=\Id-\phi\otimes\phi$, project
$\tfrac12DF_\varepsilon(v_\varepsilon)=m_\varepsilon v_\varepsilon$
onto $\phi^\perp$ and use $m_\varepsilon=t_1+O(\varepsilon)$.
Taylor expansion and \eqref{eq:abstract-assumption} then give
\begin{equation}\label{eq:abstract-branch}
 v_\varepsilon=\phi+\varepsilon\psi+O(\varepsilon^2),\qquad
 \psi=\tfrac12(t_1\Id-T)^{-1}_{\phi^\perp}P_\perp DP(\phi).
\end{equation}
The component along $\phi$ changes only by $O(\varepsilon^2)$,
because both vectors have norm one.

Let $R_\varepsilon$ be the rotation sending $\phi$ to
$v_\varepsilon$ and fixing the orthogonal complement of their span.
It satisfies
\[
 R_\varepsilon=\Id+\varepsilon A+O(\varepsilon^2),\qquad
 Au=\ip{\phi}{u}\psi-\ip{\psi}{u}\phi.
\]
The formula remains valid when $\psi=0$. Since
$\norm{F_\varepsilon\circ R_\varepsilon-F_0}_{C^2(B_1)}=O(\varepsilon)$,
\cref{lem:abstract-transfer} gives the uniform quotient comparison
\begin{equation}\label{eq:abstract-uniform}
 q_\varepsilon(u)=q_0(u)+O(\varepsilon),\qquad
 q_\varepsilon(u)=
 \frac{F_\varepsilon(v_\varepsilon)-F_\varepsilon(R_\varepsilon u)}
      {\min_{\sigma=\pm1}\norm{u-\sigma\phi}^2}.
\end{equation}
Thus $\inf q_\varepsilon=g/2+O(\varepsilon)$, by the spectral
calculation in \cref{lem:limitconstant}, with $t_1,t_2$ in place of
$\mu_1^{-1},\mu_2^{-1}$.

We first prove \eqref{eq:abstract-exact}. After a sign change, write
$u=t\phi+\sqrt{1-t^2}\,w$, where $0\le t<1$, $w\perp\phi$,
and $\norm{w}=1$. Then
\[
 q_0(t,w)=\tfrac12\bigl(t_1-\ip{Tw}{w}\bigr)(1+t).
\]
Set $G_\varepsilon=F_\varepsilon\circ R_\varepsilon-F_0$.
For $0\le t\le1/2$, differentiating
\[
 q_\varepsilon(t,w)-q_0(t,w)
 =\frac{G_\varepsilon(\phi)-G_\varepsilon(t\phi+\sqrt{1-t^2}\,w)}
        {2(1-t)}
\]
gives an $O(\varepsilon)$ bound uniform in $t,w$, because
$\norm{G_\varepsilon}_{C^1(B_1)}=O(\varepsilon)$ and the denominator
is bounded away from zero. Hence
\[
 \partial_tq_\varepsilon(t,w)
 =\tfrac12\bigl(t_1-\ip{Tw}{w}\bigr)+O(\varepsilon)
 \ge g/4>0.
\]
For $t\ge1/2$, \eqref{eq:abstract-uniform} gives
$q_\varepsilon(t,w)\ge3g/4-O(\varepsilon)$, whereas a unit vector
in $E_2$ at $t=0$ gives $g/2+O(\varepsilon)$.
Thus the infimum may be restricted to $t=0$, and every attaining
vector has $t=0$, which after rotation gives \eqref{eq:abstract-exact}
and the claimed orthogonality.

To obtain the next coefficient, choose the sign of $u$ so that
$t=\ip{u}{\phi}\ge0$, and decompose
\[
 u=t\phi+z+r,\qquad z\in E_2,\quad
 r\perp\bigl(\spanop\{\phi\}\oplus E_2\bigr).
\]
Compactness of $T$ and $t_2>0$ give $\gamma>0$ such that
$\ip{(t_2\Id-T)r}{r}\ge\gamma\norm{r}^2$. If the last
orthogonal complement is zero, omit its terms. Direct calculation gives
\begin{equation}\label{eq:abstract-split}
 q_0(u)=\frac g2(1+t)
   +\frac{\ip{(t_2\Id-T)r}{r}}{2(1-t)}.
\end{equation}
Any unit vector whose quotient is within $O(\varepsilon)$ of the
infimum therefore has $t=O(\varepsilon)$ and
$\norm{r}=O(\sqrt\varepsilon)$. In particular $t\le1/2$ and
$w=z/\norm{z}$ is defined for small $\varepsilon$.

Uniformly on the unit ball, expansion of the numerator gives
\[
 F_\varepsilon(R_\varepsilon u)
   =F_0(u)+\varepsilon H(u)+O(\varepsilon^2),\qquad
 H(u)=P(u)+2\ip{Tu}{Au}.
\]
Here $H(\phi)=P(\phi)$ and $H(w)=P(w)$ for every unit $w\in E_2$.
Moreover $H$ is Lipschitz on the unit ball, and
$\norm{u-w}\le C(t+\norm{r})$ for these unit vectors.
Since $2(1-t)$ is bounded below, it follows that
\begin{equation}\label{eq:abstract-refined}
 q_\varepsilon(u)
 =q_0(u)+\frac\varepsilon2\bigl(P(\phi)-P(w)\bigr)
       +O\bigl(\varepsilon(t+\norm{r})+\varepsilon^2\bigr).
\end{equation}
Compactness of the unit sphere of $E_2$ gives
$P_2=\max_{w\in E_2,\,\norm{w}=1}P(w)$, and testing the quotient
at a maximizer gives
$c_\varepsilon\le g/2+\varepsilon(P(\phi)-P_2)/2+O(\varepsilon^2)$.
For the lower bound, take unit vectors whose quotients are within
$\varepsilon^2$ of the infimum. Equations \eqref{eq:abstract-split} and
\eqref{eq:abstract-refined} give
\[
 q_\varepsilon(u)\ge\frac g2
 +\frac\varepsilon2\bigl(P(\phi)-P_2\bigr)
 +\frac g2t+\frac\gamma2\norm{r}^2
 -C\varepsilon(t+\norm{r})-C\varepsilon^2.
\]
For small $\varepsilon$, absorb $C\varepsilon t$ into $gt/2$
and complete the square in $\norm{r}$ to bound the remaining terms
below by $-C'\varepsilon^2$, proving \eqref{eq:abstract-expansion}.

Finally, $R_\varepsilon$ transfers the quadratic deficit to
$\phi^\perp$, where it equals $t_1\Id-T+O(\varepsilon)$ in
operator norm and has infimum $g+O(\varepsilon)$ on unit vectors.
The parametrization preserves distances to first order, so this is
the least local coefficient and gives the stated ratio.
\end{proof}

\begin{lemma}\label{lem:limitconstant}
On the unit sphere of $H_0^1(\Omega)$,
\begin{equation}\label{eq:limitingbest}
 \inf_{u\ne\pm\phi}
 \frac{\mu_1^{-1}-\int_\Omega |x|^{-2\beta}u^2\,dx}
      {\min\{\norm{u-\phi}^2,\norm{u+\phi}^2\}}
       =\frac12\left(\mu_1^{-1}-\mu_2^{-1}\right).
\end{equation}
\end{lemma}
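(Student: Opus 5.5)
The plan is a direct spectral computation in the Hilbert space $H_0^1(\Omega)$ with $T=T_0$ from \cref{lem:weighted}. Write $t_1=\mu_1^{-1}$ and $t_2=\mu_2^{-1}$. Since $\int_\Omega|x|^{-2\beta}u^2\,dx=\ip{T_0u}{u}$, the quotient in \eqref{eq:limitingbest} is
\[
 q_0(u)=\frac{t_1-\ip{T_0u}{u}}{\min_{\sigma=\pm1}\norm{u-\sigma\phi}^2}.
\]
Both numerator and denominator are invariant under $u\mapsto-u$. We may therefore assume $t=\ip{u}{\phi}\ge0$, and then the minimum equals $\norm{u-\phi}^2=2(1-t)$. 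The excluded points $u=\pm\phi$ correspond to $t=1$.

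For $0\le t<1$, write $u=t\phi+\sqrt{1-t^2}\,w$ with $w\perp\phi$ and $\norm{w}=1$. Since $\phi$ is an eigenvector of the self-adjoint operator $T_0$, the cross term $\ip{T_0\phi}{w}=t_1\ip{\phi}{w}$ vanishes. This gives
\[
 \ip{T_0u}{u}=t^2t_1+(1-t^2)\ip{T_0w}{w},
\]
so the numerator equals $(1-t^2)\bigl(t_1-\ip{T_0w}{w}\bigr)$. Dividing by $2(1-t)$ yields
\[
 q_0(u)=\tfrac12(1+t)\bigl(t_1-\ip{T_0w}{w}\bigr).
\]
This is the formula quoted in the proof of \cref{prop:abstract}.

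By the spectral theorem for the compact self-adjoint operator $T_0$, together with the simplicity of $t_1$ from \cref{lem:weighted}, we have $\ip{T_0w}{w}\le t_2$ for every unit $w\perp\phi$. Equality holds exactly when $w\in E_2$. Hence $q_0(u)\ge\tfrac12(t_1-t_2)(1+t)\ge\tfrac12(t_1-t_2)$. Equality requires $t=0$ and $w\in E_2$, so it is attained at any unit vector of the second eigenspace. That vector lies at distance $\sqrt2$ from $\pm\phi$ and is admissible since it is not $\pm\phi$. This gives the infimum $\tfrac12(\mu_1^{-1}-\mu_2^{-1})$ and shows it is attained exactly on the unit sphere of $E_2$.

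No step is a real obstacle. The points needing care are the sign reduction, so that the minimum in the denominator is $2(1-t)$, and the identification $\sup_{w\perp\phi,\norm{w}=1}\ip{T_0w}{w}=\mu_2^{-1}$. The latter is just the min-max characterization of the second eigenvalue of $T_0$ restricted to $\phi^\perp$.
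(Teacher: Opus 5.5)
Your proof is correct and follows the paper's argument: you reduce to $t=\ip{u}{\phi}\ge0$, use $\min_{\sigma=\pm1}\norm{u-\sigma\phi}^2=2(1-t)$, and bound the numerator below by $(\mu_1^{-1}-\mu_2^{-1})(1-t^2)$ via the spectral gap on $\phi^\perp$, with equality at a normalized second eigenfunction. Your exact identity $q_0(u)=\tfrac12(1+t)\bigl(\mu_1^{-1}-\ip{T_0w}{w}\bigr)$ is a slightly sharper form of the paper's inequality, and it also shows that the infimum is attained exactly on the unit sphere of $E_2$.
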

\begin{proof}
Choose the sign of $u$ so that $t=\ip{u}{\phi}\ge0$, and write
$u=t\phi+z$, $z\perp\phi$. The spectral gap gives
\[
 \begin{gathered}
 \mu_1^{-1}-\int_\Omega |x|^{-2\beta}u^2\,dx
                  \ge(\mu_1^{-1}-\mu_2^{-1})(1-t^2),\\
 \min\{\norm{u-\phi}^2,\norm{u+\phi}^2\}=2(1-t).
 \end{gathered}
\]
Thus the quotient is at least
$(\mu_1^{-1}-\mu_2^{-1})(1+t)/2\ge(\mu_1^{-1}-\mu_2^{-1})/2$.
A normalized second eigenfunction realizes equality.
\end{proof}

\begin{lemma}
\label{lem:rotation}
There are orthogonal operators $R_\alpha$ on $H_0^1(\Omega)$ such that
$R_\alpha\phi=u_\alpha$ and $R_\alpha-\Id=O(\alpha)$ in operator norm,
and, for every $u$ on the unit sphere with $u\ne\pm\phi$,
\begin{equation}\label{eq:uniformquotient}
 \frac{F_\alpha(u_\alpha)-F_\alpha(R_\alpha u)}
      {\min_{\sigma=\pm1}\norm{u-\sigma\phi}^2}
 =\frac{F_0(\phi)-F_0(u)}
      {\min_{\sigma=\pm1}\norm{u-\sigma\phi}^2}+O(\alpha).
\end{equation}
The $O(\alpha)$ bound is independent of $u$, including as
$u\to\phi$ or $u\to-\phi$.
\end{lemma}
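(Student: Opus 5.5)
We take $R_\alpha$ to be the plane rotation in $\spanop\{\phi,u_\alpha\}$ that sends $\phi$ to $u_\alpha$ and fixes the orthogonal complement of this span; it is the identity if $u_\alpha=\phi$. By \cref{thm:unique}, $u_\alpha=\phi+\alpha\psi+O(\alpha^2)$ with $\psi\perp\phi$. Hence the rotation angle $\theta_\alpha$ satisfies $\sin\theta_\alpha=\norm{u_\alpha-\ip{u_\alpha}{\phi}\phi}=O(\alpha)$. The explicit formula
\[
R_\alpha u=u+(\cos\theta_\alpha-1)\bigl(\ip{u}{\phi}\phi+\ip{u}{e}e\bigr)+\sin\theta_\alpha\bigl(\ip{u}{\phi}e-\ip{u}{e}\phi\bigr),
\]
where $e$ is the unit vector completing the plane, then gives $\norm{R_\alpha-\Id}=O(\alpha)$. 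Orthogonality and $R_\alpha\phi=u_\alpha$ are immediate. Since $R_\alpha$ is linear and orthogonal, $R_\alpha(-\phi)=-u_\alpha$.

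Next set $G_\alpha=F_\alpha\circ R_\alpha$. It is even and $C^2$ on a neighborhood of the unit ball, because $R_\alpha$ is linear and maps balls to balls. We claim $\norm{G_\alpha-F_0}_{C^2(B_1)}=O(\alpha)$. To see this, split
\[
G_\alpha-F_0=(F_\alpha-F_0)\circ R_\alpha+(F_0\circ R_\alpha-F_0).
\]
The first term is $O(\alpha)$ in $C^2$ by \cref{lem:calculus}, since composition with a norm-bounded linear map preserves $C^2$ bounds. The second term equals $\ip{(R_\alpha^*T_0R_\alpha-T_0)u}{u}$, a quadratic form whose operator is $O(\alpha)$.

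The point $\phi$ is a unit constrained critical point of $G_\alpha$, because $u_\alpha$ is a constrained critical point of $F_\alpha$ and $R_\alpha$ preserves the sphere. It is also a constrained critical point of $F_0$, being an eigenvector. Moreover $G_\alpha(\phi)=F_\alpha(u_\alpha)$. We then apply \cref{lem:abstract-transfer} with $F=G_\alpha$, $G=F_0$ and $v=\phi$. It yields
\[
\bigl|[F_\alpha(u_\alpha)-F_\alpha(R_\alpha u)]-[F_0(\phi)-F_0(u)]\bigr|\le C\alpha\min_{\sigma=\pm1}\norm{u-\sigma\phi}^2,
\]
with $C$ independent of $u$. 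Dividing by the minimum gives \eqref{eq:uniformquotient} uniformly in $u$, including as $u\to\pm\phi$.

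The only delicate point is the $C^2$ control of the difference, and in particular of $DH(\phi)$ in \cref{lem:abstract-transfer}. This comes from the $C^1$ part of the same $O(\alpha)$ bound. We also need the $C^2$ neighborhood of $B_1$ to be large enough that the segments from $\pm\phi$ to $u$ stay inside it, which they do since they lie in the unit ball.
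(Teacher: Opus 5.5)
Your proposal is correct and follows the paper's proof. You use the same plane rotation, the same bound $\norm{F_\alpha\circ R_\alpha-F_0}_{C^2(B_1)}=O(\alpha)$ from \cref{lem:calculus} and $R_\alpha-\Id=O(\alpha)$, constrained criticality of $\phi$ for both functionals, and the Taylor-plus-evenness estimate; the only difference is that you cite \cref{lem:abstract-transfer} and spell out the split of the $C^2$ bound, whereas the paper repeats that Taylor argument inline.
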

\begin{proof}
Take the rotation in $\spanop\{\phi,u_\alpha\}$ sending $\phi$ to
$u_\alpha$ and fixing its orthogonal complement, using the identity
when the two vectors coincide. The operator estimate follows from
$\norm{u_\alpha-\phi}=O(\alpha)$.

By \cref{lem:calculus}, $G_\alpha=F_\alpha\circ R_\alpha-F_0$
satisfies $\norm{G_\alpha}_{C^2}=O(\alpha)$ on the unit ball. Both terms have a constrained critical point at
$\phi$, so the gradient of $G_\alpha$ at $\phi$ is parallel to $\phi$
and has norm $O(\alpha)$.
For $u$ on the unit sphere,
\[
 \ip{\phi}{u-\phi}=-\tfrac12\norm{u-\phi}^2.
\]
Taylor's formula along the line segment from $\phi$ to $u$ consequently
gives
\[
 |G_\alpha(u)-G_\alpha(\phi)|\le C\alpha\norm{u-\phi}^2.
\]
By evenness the same estimate holds at $-\phi$, so choosing the
nearer sign proves \eqref{eq:uniformquotient}, uniformly even as $u$ approaches
$\phi$ or $-\phi$.
\end{proof}

\begin{lemma}\label{lem:ball}
Suppose that $\M_\alpha$ is nonempty and compact, and let $c\ge0$
be the sharp stability constant on the unit sphere. Then the sharp
constant on the closed unit ball is also $c$, and
\begin{equation}\label{eq:ball-remainder}
 S_\alpha-J_\alpha(u)\ge
 c\bigl(\dist(u,\M_\alpha)^2+1-\norm{u}^2\bigr),
 \qquad \norm{u}\le1.
\end{equation}
If $c>0$, no function of norm less than one attains the stability quotient.
\end{lemma}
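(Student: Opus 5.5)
The plan is to reduce the ball to the sphere by radial scaling, using that $J_\alpha$ is increasing along rays. Fix $u$ with $0<\norm{u}<1$ and set $\hat u=u/\norm{u}$, with $s=\norm{u}$. The case $u=0$ is treated separately: there $J_\alpha(0)=\int_\Omega|x|^{-2\beta}dx$, and $\dist(0,\M_\alpha)=1$ because every maximizer has norm one. The key pointwise fact is
\[
 J_\alpha(\hat u)-J_\alpha(u)=\int_\Omega|x|^{-2\beta}\bigl(e^{\alpha\hat u^2}-e^{\alpha s^2\hat u^2}\bigr)dx\ge0 .
\]
This alone gives only $S_\alpha-J_\alpha(u)\ge c\,\dist(\hat u,\M_\alpha)^2$. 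I need to upgrade it to the sum in \eqref{eq:ball-remainder}.

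The main geometric step is to compare distances. For $v\in\M_\alpha$ we have $\norm v=1$, so
\[
 \norm{u-v}^2=s^2-2s\ip{\hat u}{v}+1=s\norm{\hat u-v}^2+(1-s)^2 .
\]
Taking the infimum over the compact set $\M_\alpha$ gives
\[
 \dist(u,\M_\alpha)^2+1-s^2=s\,\dist(\hat u,\M_\alpha)^2+(1-s)^2+1-s^2=s\,\dist(\hat u,\M_\alpha)^2+2(1-s).
\]
Since $s\le1$, it suffices to prove
\[
 S_\alpha-J_\alpha(u)\ge c\,\dist(\hat u,\M_\alpha)^2+2c(1-s).
\]
The sphere estimate supplies the first term through $S_\alpha-J_\alpha(\hat u)$. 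So what remains is the radial gain
\[
 J_\alpha(\hat u)-J_\alpha(u)\ge 2c(1-s).
\]

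This radial gain is the main obstacle, and I would try convexity in $\tau=s^2$ first. The map $g(\tau)=J_\alpha(\sqrt\tau\,\hat u)=\int|x|^{-2\beta}e^{\alpha\tau\hat u^2}$ is convex and increasing, so
\[
 g(1)-g(s^2)\ge (1-s^2)\,g'(s^2)\ge(1-s^2)\,g'(0)=(1-s^2)\,\alpha\int|x|^{-2\beta}\hat u^2 .
\]
This lower bound can be small when $\hat u$ has small weighted $L^2$ mass, so convexity alone may fall short. If it does, I will bound $c$ from above instead. Testing the sphere quotient at points where the deficit is nearly attained, or simply using $c\le\tfrac12(S_\alpha-\sup_{w}J_\alpha(w))$, should be comparable with $g'$.

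A cleaner route may avoid this altogether by combining the two terms with the chord $u$ to $v$ directly. I would write $S_\alpha-J_\alpha(u)=[S_\alpha-J_\alpha(\hat u)]+[g(1)-g(s^2)]$ and, when $\dist(\hat u,\M_\alpha)$ is small, use the near-maximizer equation $g'(1)\approx\alpha\lambda^{-1}$. That bounds $g(1)-g(s^2)$ below by a fixed multiple of $1-s$, and $c$ is at most that multiple, by the small-$t$ analysis showing $c\le$ the local coefficient/2.

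Sharpness of $c$ on the ball follows from $\M_\alpha\subset$ sphere, since the sphere infimum is included in the ball infimum. The ball quotient is also at least $c$, because $\dist(u,\M_\alpha)^2\le\dist(u,\M_\alpha)^2+1-\norm u^2$. For non-attainment when $c>0$, observe that $\norm u<1$ gives $1-\norm u^2>0$. Then \eqref{eq:ball-remainder} yields strict inequality $S_\alpha-J_\alpha(u)>c\,\dist(u,\M_\alpha)^2$, so the quotient exceeds $c$.
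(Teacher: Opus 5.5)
There is a genuine gap at the step ``since $s\le1$, it suffices to prove $S_\alpha-J_\alpha(u)\ge c\,\dist(\hat u,\M_\alpha)^2+2c(1-s)$''. Dropping the factor $s$ in front of $\dist(\hat u,\M_\alpha)^2$ gives an inequality that is false. The radial gain $J_\alpha(\hat u)-J_\alpha(u)\ge2c(1-s)$ you then reduce to is false as well.

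To see this, take an orthonormal sequence $e_j$ and $u=se_j$. Compactness of $\M_\alpha$ gives $\dist(e_j,\M_\alpha)^2\to2$, and $J_\alpha(se_j)\ge J_\alpha(0)$. Letting $j\to\infty$, your target forces $S_\alpha-J_\alpha(0)\ge2c(2-s)$ for every $s\in(0,1)$, hence $S_\alpha-J_\alpha(0)\ge4c$. By \eqref{eq:sup-exp} and \eqref{eq:best-exp}, this implies $\mu_2\le2\mu_1$ for small $\alpha$ (the sphere constant is at least the ball constant). That fails on the unweighted disk, where $\mu_2=j_{1,1}^2>2j_{0,1}^2=2\mu_1$. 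Since the sphere estimate itself is true, the radial gain must fail for these $u$.

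Your fallback ideas do not reach this regime. The near-maximizer equation only helps when $\dist(\hat u,\M_\alpha)$ is small, and bounding $c$ through $g'$ does not help when $\hat u$ is far from $\M_\alpha$ with small weighted mass. You also announce the case $u=0$ but never prove it. It requires exactly $S_\alpha-J_\alpha(0)\ge2c$, which is not automatic.

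Two ingredients are missing. The first is that inequality, obtained by testing the sphere quotient at $e_j$: there $J_\alpha(e_j)\ge J_\alpha(0)$ and $\dist(e_j,\M_\alpha)^2\to2$, so $S_\alpha-J_\alpha(0)\ge2c$. The second is convexity used as a chord rather than a tangent. The bound $e^{\alpha s^2\hat u^2}\le s^2e^{\alpha\hat u^2}+1-s^2$ yields
\[
 S_\alpha-J_\alpha(s\hat u)\ge s^2\bigl(S_\alpha-J_\alpha(\hat u)\bigr)+(1-s^2)\bigl(S_\alpha-J_\alpha(0)\bigr).
\]
So the sphere deficit enters only with weight $s^2$, and the slack at the origin covers the rest. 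Put $h=\max_{v\in\M_\alpha}\ip{\hat u}{v}\in[0,1]$. The right side is then at least $2cs^2(1-h)+2c(1-s^2)=2c(1-s^2h)\ge2c(1-sh)$. Your own distance identity shows $2c(1-sh)=c\bigl(\dist(u,\M_\alpha)^2+1-s^2\bigr)$. Once \eqref{eq:ball-remainder} is established this way, your closing arguments for equality of the sharp constants and for non-attainment when $c>0$ are fine.
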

\begin{proof}
Write $B=J_\alpha(0)$ and choose an orthonormal sequence $e_j$.
Compactness of $\M_\alpha$ gives
$\sup_{v\in\M_\alpha}|\ip{e_j}{v}|\to0$, so
$\dist(e_j,\M_\alpha)^2\to2$. Since $J_\alpha(e_j)\ge B$,
the sphere estimate yields $2c\le S_\alpha-B$.

For $u=tz$, $0<t\le1$ and $\norm{z}=1$, put
$h=\max_{v\in\M_\alpha}\ip{z}{v}\in[0,1]$.
Here every maximizer has norm one and $\M_\alpha=-\M_\alpha$.
Convexity of the exponential gives
\begin{align*}
 S_\alpha-J_\alpha(tz)
 &\ge t^2\bigl(S_\alpha-J_\alpha(z)\bigr)+(1-t^2)(S_\alpha-B)\\
 &\ge2ct^2(1-h)+2c(1-t^2)\\
 &=2c(1-t^2h)\ge2c(1-th)\\
 &=c\bigl(\dist(tz,\M_\alpha)^2+1-t^2\bigr).
\end{align*}
The bound $S_\alpha-B\ge2c$ covers $u=0$, completing
\eqref{eq:ball-remainder} and proving equality of the sharp constants
and nonattainment in the interior when $c>0$.
The argument also works for any equivalent Hilbert norm, using only
compactness of the unit-norm maximizer set and convexity along rays.
\end{proof}

\begin{proof}[Proof of \cref{thm:sharp}]
An orthogonal map preserves both the sphere and distances, so
\cref{lem:limitconstant,lem:rotation} imply
\[
 \inf_{\norm{u}=1,\ u\notin\M_\alpha}
 \frac{S_\alpha-\int_\Omega |x|^{-2\beta}e^{\alpha u^2}\,dx}{\alpha \dist(u,\M_\alpha)^2}
 =\frac12(\mu_1^{-1}-\mu_2^{-1})+O(\alpha).
\]
This is positive for sufficiently small $\alpha$.
For the refined expansion apply \cref{prop:abstract} to $T=T_0$,
$\varepsilon=\alpha$ and
\[
 P(u)=\tfrac12\int_\Omega|x|^{-2\beta}u^4\,dx.
\]
The required $C^2$ expansion is \cref{lem:calculus}, attainment follows
from \cref{lem:subcompact}, and $t_i=\mu_i^{-1}$ for $i=1,2$.
Multiplying \eqref{eq:abstract-expansion} by $\alpha$ proves
\eqref{eq:second-best} on the sphere, and \cref{lem:ball} extends
the conclusions to the closed ball with the same sharp constants.
\end{proof}

\begin{proof}[Proof of \cref{thm:attainment}]
Applying \eqref{eq:abstract-exact} as in the proof of
\cref{thm:sharp} and multiplying by $\alpha$ gives
\eqref{eq:exact-constant}, initially with a supremum in place
of the maximum.
For subcritical $\alpha$, \cref{lem:subcompact} gives sequential
weak continuity of $J_\alpha$ on the closed unit ball, so it attains
its maximum on $\{w:\norm{w}\le1,\ w\perp u_\alpha\}$.
This value exceeds $J_\alpha(0)$, and scaling forces each maximizer
to have norm one, proving attainment in \eqref{eq:exact-constant}.
Applied to a sequence of these maximizers, the same argument gives
a weak limit of norm one and hence strong convergence.

By \cref{prop:abstract}, every sphere minimizer of the stability
quotient is orthogonal to $u_\alpha$. Together with \cref{lem:ball}, which excludes attainment in the
interior of the ball, this proves \eqref{eq:attaining-set} and
compactness of $\mathcal A_\alpha$.
For $w\in\mathcal A_\alpha$, both distances to $\pm u_\alpha$ equal
$\sqrt2$. Testing \eqref{eq:EL} against $w$ gives
\[
 0=\ip{u_\alpha}{w}
 =\lambda_\alpha\int_\Omega |x|^{-2\beta}
                   u_\alpha e^{\alpha u_\alpha^2}w\,dx.
\]
The coefficient of $w$ is positive almost everywhere, so $w$ changes sign.

For the limiting profiles, write $R_\alpha^{-1}w=z+r$ with
$z\in E_2$ and $r\perp(\spanop\{\phi\}\oplus E_2)$.
Uniform quotient comparison gives $\norm{r}=O(\sqrt\alpha)$.
Let $\widehat z=z/\norm{z}$ and set
$P(v)=\tfrac12\int_\Omega |x|^{-2\beta}v^4\,dx$ and
$P_2=\max_{v\in E_2,\,\norm{v}=1}P(v)$.
Comparing the minimizing vector with a maximizer of $P$ in
\eqref{eq:abstract-split}--\eqref{eq:abstract-refined}, where $t=0$, gives
\[
 \gamma\norm{r}^2+\alpha\bigl(P_2-P(\widehat z)\bigr)
       \le C\alpha\norm{r}+C\alpha^2.
\]
Nonnegativity of both terms on the left gives $\norm{r}=O(\alpha)$
and $P_2-P(\widehat z)=O(\alpha)$ uniformly over $\mathcal A_\alpha$.
Together with $R_\alpha-\Id=O(\alpha)$, this yields
$\norm{w-\widehat z}=O(\alpha)$, so compactness of the unit sphere
of $E_2$ proves \eqref{eq:attaining-selection}.
\end{proof}

For a polynomial application, let $b\in L^\infty(\Omega)$ be nonnegative
and consider on the Dirichlet unit sphere
\[
 F_\varepsilon(u)=\int_\Omega u^2\,dx
                    +\varepsilon\int_\Omega b u^4\,dx.
\]
Compact Sobolev embeddings give weak continuity on the closed ball,
and radial rescaling forces every nonzero maximizer to have norm one
by increasing the first integral without decreasing the second. If $\nu_1<\nu_2$ are the first two ordinary Dirichlet
eigenvalues, $\phi$ is the normalized first eigenfunction, and $E_2$
is the second eigenspace, then the sharp sphere deficit constant is
\[
 \frac12\left(\frac1{\nu_1}-\frac1{\nu_2}\right)
 +\frac\varepsilon2\left(\int_\Omega b\phi^4\,dx
      -\max_{\substack{w\in E_2\\\norm{w}=1}}
            \int_\Omega b w^4\,dx\right)+O(\varepsilon^2).
\]

To express the local constant through the constrained Hessian, define
\[
 C_\alpha^{\mathrm{loc}}=\lim_{\delta\downarrow0}
  \inf_{\substack{\norm{u}=1\\0<\dist(u,\M_\alpha)<\delta}}
      \frac{S_\alpha-J_\alpha(u)}{\dist(u,\M_\alpha)^2}.
\]
For $h\perp u_\alpha$, set
$u=(u_\alpha+h)/(1+\norm{h}^2)^{1/2}$. Taylor expansion gives
\[
 S_\alpha-J_\alpha(u)=\frac{\alpha}{\lambda_\alpha}Q_{u_\alpha}[h]
                      +O(\norm{h}^3),\qquad
 \dist(u,\M_\alpha)^2=\norm{h}^2+O(\norm{h}^4).
\]
The remainders are uniform in the direction of $h$ for fixed
$\alpha$, and evenness gives the same value near $-u_\alpha$.
Consequently,
\begin{equation}\label{eq:local-constant}
 \begin{aligned}
 C_\alpha^{\mathrm{loc}}
   &=\frac{\alpha}{\lambda_\alpha}
        \inf_{\substack{h\perp u_\alpha\\\norm{h}=1}}Q_{u_\alpha}[h]\\
   &=\alpha(\mu_1^{-1}-\mu_2^{-1})+O(\alpha^2),\qquad
 \frac{C_\alpha}{C_\alpha^{\mathrm{loc}}}=\frac12+O(\alpha).
 \end{aligned}
\end{equation}
Indeed, \cref{lem:calculus,lem:rotation} and
$\lambda_\alpha=\mu_1+O(\alpha)$ imply, uniformly for $k\perp\phi$,
\[
 \lambda_\alpha^{-1}Q_{u_\alpha}[R_\alpha k]
 =\mu_1^{-1}\norm{k}^2-\int_\Omega|x|^{-2\beta}k^2\,dx
       +O(\alpha)\norm{k}^2.
\]
Taking the infimum over unit $k$ proves the expansion, and the ratio
follows from \cref{thm:sharp}.
For any normalized second eigenfunction $\phi_2$,
\eqref{eq:uniformquotient} and \eqref{eq:best-exp} also show that
the quotient at $R_\alpha\phi_2$ is $C_\alpha+O(\alpha^2)$.

The factor $1/2$ also has an exact two-dimensional interpretation.
Along the unit-sphere arc
$u=\cos\theta\,\phi+\sin\theta\,\phi_2$, with $0<\theta\le\pi/2$,
orthogonality and the eigenvalue equations give
\[
 \frac{F_0(\phi)-F_0(u)}
 {\bigl(\mu_1^{-1}-\mu_2^{-1}\bigr)
       \min_{\sigma=\pm1}\norm{u-\sigma\phi}^2}
 =\frac{\sin^2\theta}{2(1-\cos\theta)}
 =\frac{1+\cos\theta}{2}.
\]
As shown in \cref{fig:local-global}, the local limit at $\phi$ equals
one, whereas the global minimum equals $1/2$ and is attained at
$\phi_2$, which lies at distance $\sqrt2$ from both maximizers.

\begin{figure}[htb]
\centering
\begin{minipage}[t]{0.47\textwidth}
\centering
\small\textbf{(a) Geometry on the unit sphere}\par\medskip
\begin{tikzpicture}[x=3.2cm,y=3.2cm,font=\small,>=stealth]
 \path[use as bounding box] (-0.22,-0.23) rectangle (1.35,1.28);
 \draw[->,thin] (-0.06,0)--(1.24,0);
 \draw[->,thin] (0,-0.06)--(0,1.22);
 \draw[line width=0.9pt] (1,0) arc[start angle=0,end angle=90,radius=1];
 \draw[dashed,gray] (1,0)--(0,1);
 \node[fill=white,inner sep=2pt] at (0.45,0.55) {$\sqrt2$};
 \coordinate (u) at ({cos(25)},{sin(25)});
 \draw[densely dotted] (0,0)--(u);
 \draw[thin] (1,0)--(u);
 \draw[thin] (0.27,0) arc[start angle=0,end angle=25,radius=0.27];
 \node at (0.37,0.08) {$\theta$};
 \fill (1,0) circle (1.7pt) node[below=4pt] {$\phi$};
 \fill (0,1) circle (1.7pt) node[left=4pt] {$\phi_2$};
 \fill (u) circle (1.7pt) node[above right=2pt] {$u$};
 \node[below left=2pt] at (0,0) {$0$};
\end{tikzpicture}
\end{minipage}\hfill
\begin{minipage}[t]{0.50\textwidth}
\centering
\small\textbf{(b) Normalized deficit quotient}\par\medskip
\begin{tikzpicture}[x=2.7cm,y=3.2cm,font=\small,>=stealth]
 \path[use as bounding box] (-0.23,-0.23) rectangle (1.84,1.28);
 \draw[->,thin] (0,0)--(1.77,0) node[right] {$\theta$};
 \draw[->,thin] (0,0)--(0,1.18);
 \draw[densely dashed,gray] (0,0.5)--({pi/2},0.5);
 \draw[densely dotted,gray] ({pi/2},0)--({pi/2},0.5);
 \draw[line width=0.9pt,domain=0:90,samples=65]
       plot ({\x*pi/180},{(1+cos(\x))/2});
 \draw[fill=white,line width=0.8pt] (0,1) circle (2pt);
 \fill ({pi/2},0.5) circle (1.8pt);
 \node[anchor=west] at (0.08,1.13) {Local limit};
 \node at (1.11,0.29) {Global minimum};
 \foreach \x/\lab in {0/0,0.785398/{\pi/4},1.570796/{\pi/2}} {
   \draw (\x,0)--(\x,-0.025);
   \node[below=3pt] at (\x,0) {$\lab$};
 }
 \foreach \y/\lab in {0.5/{1/2},1/1} {
   \draw (0,\y)--(-0.025,\y);
   \node[left=3pt] at (0,\y) {$\lab$};
 }
\end{tikzpicture}
\end{minipage}
\caption{The distinction between local and global stability in the
limiting quadratic problem. The left panel shows the unit-sphere arc
in $\spanop\{\phi,\phi_2\}$ for the Dirichlet inner product.
The right panel plots the deficit-to-squared-distance quotient divided
by $\mu_1^{-1}-\mu_2^{-1}$. The open endpoint denotes the local limit
as $\theta\downarrow0$, and the filled endpoint gives the global minimum.}
\label{fig:local-global}
\end{figure}

When the second eigenvalue is simple, the functions attaining
$C_\alpha$ form a unique pair, whose first correction is explicit.

\begin{proposition}\label{prop:attaining-branch}
Suppose $E_2=\spanop\{\eta\}$ with $\norm{\eta}=1$.
For sufficiently small $\alpha>0$,
$\mathcal A_\alpha=\{w_\alpha,-w_\alpha\}$ with
$\ip{w_\alpha}{\eta}>0$. The map $\alpha\mapsto w_\alpha$
extends smoothly to $\alpha=0$, where $w_0=\eta$.
Let $X=(\spanop\{\phi,\eta\})^\perp$ and let $\xi\in X$
be the unique solution of
\begin{equation}\label{eq:attaining-correction}
 \ip{(\mu_2^{-1}\Id-T_0)\xi}{h}
       =\int_\Omega |x|^{-2\beta}\eta^3h\,dx,
       \qquad h\in X.
\end{equation}
With $\psi$ from \eqref{eq:psi},
\begin{equation}\label{eq:attaining-expansion}
 w_\alpha=\eta+\alpha\bigl(\xi-\ip{\psi}{\eta}\phi\bigr)
                  +O(\alpha^2)
       \quad\text{in }H_0^1(\Omega).
\end{equation}
\end{proposition}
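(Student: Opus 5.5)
The plan is to move the problem to the fixed subspace $\phi^\perp$ with the rotations $R_\alpha$ of \cref{lem:rotation}, and then apply the implicit function theorem at the simple second eigenvector $\eta$. By \cref{thm:attainment}, $\mathcal A_\alpha$ is exactly the set of maximizers of $J_\alpha$ on the unit sphere of $u_\alpha^\perp$. Since $R_\alpha$ is orthogonal and $R_\alpha\phi=u_\alpha$, we have $w\in\mathcal A_\alpha$ if and only if $w=R_\alpha k$, where $k$ maximizes
\[
 G_\alpha(k)=F_\alpha(R_\alpha k)
\]
over the unit sphere of $\phi^\perp$. I will take $R_\alpha$ built from the smooth branch $u_\alpha$ of \cref{thm:unique}, so that $R_\alpha$ depends smoothly on $\alpha$ with $R_\alpha=\Id+\alpha A+O(\alpha^2)$ and $Au=\ip{\phi}{u}\psi-\ip{\psi}{u}\phi$. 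By \cref{lem:calculus}, $G_\alpha$ is then jointly smooth and
\[
 G_\alpha=F_0+\alpha H+O_{C^2}(\alpha^2),\qquad H(u)=P(u)+2\ip{T_0u}{Au},\qquad P(u)=\tfrac12\int_\Omega|x|^{-2\beta}u^4\,dx.
\]

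First, uniqueness and smoothness. At $\alpha=0$, the maximizers of $F_0(k)=\ip{T_0k}{k}$ on the unit sphere of $\phi^\perp$ are exactly $\pm\eta$, because $E_2$ is one-dimensional. Consider the map
\[
 (\alpha,k,\ell)\mapsto\bigl(\tfrac12P_{\phi^\perp}DG_\alpha(k)-\ell k,\ \norm{k}^2-1\bigr)
\]
on $\phi^\perp\times\R$, which vanishes at $(0,\eta,\mu_2^{-1})$. Its derivative in $(k,\ell)$ is $(h,m)\mapsto\bigl((T_0-\mu_2^{-1}\Id)h-m\eta,\ 2\ip{\eta}{h}\bigr)$. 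This is an isomorphism: the second component fixes the $\eta$-part of $h$, the $\eta$-projection of the first fixes $m$, and $T_0-\mu_2^{-1}\Id$ is invertible on $X$ because $\mu_2$ is simple and the remaining spectrum is separated from $\mu_2^{-1}$. This gives a smooth branch $k_\alpha$ with $k_0=\eta$.

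To show that it captures all of $\mathcal A_\alpha$, I use \eqref{eq:attaining-selection} with $\mathcal K_2=\{\pm\eta\}$. It places every element of $R_\alpha^{-1}\mathcal A_\alpha$ within $o(1)$ of $\pm\eta$. As in the proof of \cref{prop:abstract}, in the chart $k=(\eta+x)/(1+\norm{x}^2)^{1/2}$ with $x\in X$, the Hessian of $F_0$ is $2(T_0-\mu_2^{-1}\Id)|_X\le-2\gamma'\Id$. By $C^2$ closeness, $G_\alpha$ is strictly concave on a fixed neighborhood and has a single critical point there, which is $k_\alpha$. Evenness then gives $\mathcal A_\alpha=\{\pm w_\alpha\}$ with $w_\alpha=R_\alpha k_\alpha$, and $\ip{w_\alpha}{\eta}>0$ for small $\alpha$. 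I expect this localization step to be the main point needing care, but it is a direct reuse of the concavity argument in \cref{prop:abstract}.

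Second, the first-order correction. Since $\norm{k_\alpha}=1$ and $k_\alpha\perp\phi$, the derivative $k'=\partial_\alpha k_\alpha|_0$ lies in $X$. Differentiate the critical-point equation at $\alpha=0$ and project onto $X$. Using $\ell_0=\mu_2^{-1}$, this gives
\[
 (\mu_2^{-1}\Id-T_0)k'=\tfrac12P_X DH(\eta).
\]
I compute $DH(\eta)$ on directions $h\in X$:
\begin{itemize}
\item $\tfrac12DP(\eta)[h]=\int_\Omega|x|^{-2\beta}\eta^3h\,dx$.
\item The derivative of $2\ip{T_0u}{Au}$ at $\eta$ in direction $h$ is $2\ip{T_0h}{A\eta}+2\ip{T_0\eta}{Ah}$.
\item Since $\eta\perp\phi$, $A\eta=-\ip{\psi}{\eta}\phi$; similarly $h\perp\phi$ gives $Ah=-\ip{\psi}{h}\phi$.
\item Self-adjointness gives $\ip{T_0h}{\phi}=\mu_1^{-1}\ip{h}{\phi}=0$ and $\ip{T_0\eta}{\phi}=0$, so both rotation terms vanish.
\end{itemize}
Hence $k'=\xi$, the solution of \eqref{eq:attaining-correction}.

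Finally,
\[
 w_\alpha=R_\alpha k_\alpha=\eta+\alpha\bigl(\xi+A\eta\bigr)+O(\alpha^2)=\eta+\alpha\bigl(\xi-\ip{\psi}{\eta}\phi\bigr)+O(\alpha^2),
\]
which is \eqref{eq:attaining-expansion}. Smooth extension to $\alpha=0$ follows from the joint smoothness of $R_\alpha$ and $k_\alpha$.
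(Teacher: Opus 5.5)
Your proposal is correct and follows the paper's proof: rotate onto $\phi^\perp$ with a smooth family $R_\alpha$, apply the implicit function theorem at $\eta$ using the gap of $\mu_2^{-1}\Id-T_0$ on $X$, place every attaining function near $\pm\eta$ via \eqref{eq:attaining-selection}, and differentiate. In that last step the rotation terms drop out on $\phi^\perp$ because $T_0\phi=\mu_1^{-1}\phi$, and $R'_0\eta=-\ip{\psi}{\eta}\phi$ supplies the $\phi$-component. The differences are cosmetic: you solve a Lagrange-multiplier system on $\phi^\perp\times\R$ where the paper uses the graph chart $h\mapsto R_\alpha(\eta+h)/\sqrt{1+\norm{h}^2}$ over $X$, and you spell out the concavity (or implicit-function uniqueness) argument that the paper leaves implicit when it places every attaining function in this chart.
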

\begin{proof}
Choose the rotations smoothly by setting $c_\alpha=\ip{u_\alpha}{\phi}$
and
\[
 K_\alpha h=\ip{\phi}{h}u_\alpha-\ip{u_\alpha}{h}\phi,\qquad
 R_\alpha=\Id+K_\alpha+\frac{K_\alpha^2}{1+c_\alpha}.
\]
This orthogonal map sends $\phi$ to $u_\alpha$, and
$R_0=\Id$. For $h\in X$ near zero, consider
\[
 G_\alpha(h)=F_\alpha\left(
       R_\alpha\frac{\eta+h}{\sqrt{1+\norm{h}^2}}\right).
\]
Its $X$-gradient vanishes at $(\alpha,h)=(0,0)$, and its derivative
in $h$ is $2(T_0-\mu_2^{-1}\Id)|_X$. Simplicity and isolation of
$\mu_2^{-1}$ give $\gamma>0$ such that
\[
 \ip{(\mu_2^{-1}\Id-T_0)h}{h}\ge\gamma\norm{h}^2,
 \qquad h\in X.
\]
The implicit function theorem gives a unique small critical point
$h_\alpha$, smooth in $\alpha$, and \cref{thm:attainment} with
$\mathcal K_2=\{\eta,-\eta\}$ places every attaining function in
this chart after a sign change and rotation.
Thus the attaining pair is generated by
$w_\alpha=R_\alpha(\eta+h_\alpha)/\sqrt{1+\norm{h_\alpha}^2}$.

Differentiating the rotation gives
$R'_0z=\ip{\phi}{z}\psi-\ip{\psi}{z}\phi$.
For $z\perp\phi$, the first variation of $F_0(R_\alpha z)$
vanishes since $T_0\phi=\mu_1^{-1}\phi$. Thus on $\phi^\perp$,
\[
 F_\alpha(R_\alpha z)=F_0(z)
       +\frac\alpha2\int_\Omega |x|^{-2\beta}z^4\,dx
       +O_{C^2}(\alpha^2).
\]
Differentiating $\nabla_XG_\alpha(h_\alpha)=0$ at zero gives
\eqref{eq:attaining-correction} with $\xi=h'_0$. The gap on $X$
proves existence and uniqueness of $\xi$. Combining this with
$R'_0\eta=-\ip{\psi}{\eta}\phi$ gives \eqref{eq:attaining-expansion}.
\end{proof}

\paragraph{Example: explicit constants on the unit disk.}
Let $\Omega=D$ and $0\le\beta<1$. Denote by $j_{\nu,m}$ the $m$th
positive zero of the Bessel function $J_\nu$. The first two weighted
eigenvalues are
\begin{equation}\label{eq:bessel-eigenvalues}
 \begin{aligned}
 \mu_1&=(1-\beta)^2j_{0,1}^2,\\
 \mu_2&=(1-\beta)^2\min\{j_{0,2}^2,j_{1/(1-\beta),1}^2\}.
 \end{aligned}
\end{equation}
By \cref{thm:sharp}, as $\alpha\downarrow0$,
\[
 C_\alpha
 =\frac{\alpha}{2(1-\beta)^2}
 \left(\frac1{j_{0,1}^2}
       -\frac1{\min\{j_{0,2}^2,j_{1/(1-\beta),1}^2\}}\right)
       +O(\alpha^2).
\]
Either a radial or a nonradial mode can realize the second eigenvalue.

To verify \eqref{eq:bessel-eigenvalues}, set $a=1-\beta$.
For angular frequency $k\ge0$, write
$h(r,\theta)=g(r^a)\cos(k\theta)$,
or use the corresponding sine mode when $k\ge1$.
The substitution $s=r^a$ turns $-\Delta h=\mu r^{-2\beta}h$ into
\[
 -g''-s^{-1}g'+\frac{k^2}{a^2s^2}g=\frac{\mu}{a^2}g.
\]
Finite energy at the origin and the boundary condition at $s=1$
give eigenvalues $a^2j_{k/a,m}^2$. The first radial eigenvalue is
the smallest, and monotonicity of the Rayleigh quotient in $k$
leaves only the second radial and first frequency-one eigenvalues
as candidates for the next, proving \eqref{eq:bessel-eigenvalues}. As $a\downarrow0$ the
angular order $1/a$ grows, so the second radial eigenvalue eventually
becomes the second eigenvalue of the full problem.

The same eigenvalues describe the functions attaining the stability
constant. If $j_{0,2}<j_{1/(1-\beta),1}$, the second eigenspace is
one-dimensional and radial. By \cref{prop:attaining-branch}, for
sufficiently small $\alpha$ there is exactly one attaining pair,
and both functions are radial. Indeed, rotations preserve
$\mathcal A_\alpha=\{w_\alpha,-w_\alpha\}$, so continuity and
connectedness of the rotation group force its orbit to be a single point.

If $j_{1/(1-\beta),1}<j_{0,2}$, the attaining functions approach
the unit sphere of the first angular eigenspace by
\cref{thm:attainment}. All unit vectors in that eigenspace differ
by a rotation, so their weighted fourth-power integrals agree.
At equality, let $f_0,f_c,f_s$ be Dirichlet-orthonormal radial,
cosine, and sine eigenfunctions. After rotation and an overall sign
change, a unit vector has the form
$\sqrt{1-t}\,f_0+\sqrt t\,f_c$, $0\le t\le1$.
The quartic selection reduces to
\[
 \max_{0\le t\le1}\int_D |x|^{-2\beta}
   \bigl((1-t)^2f_0^4+6t(1-t)f_0^2f_c^2+t^2f_c^4\bigr)\,dx.
\]
Angular integration removes the odd mixed terms, so this formula
describes the possible limiting mixtures as $\alpha\downarrow0$.

\section{Critical disk extremals and finite-power stability}\label{sec:disk}

Throughout this section $\Omega=D$, $0\le\beta<1$ and
$0<\alpha\le4\pi(1-\beta)$, with $a=1-\beta$ in radial calculations.
Compactness and second-variation estimates give \cref{thm:disk},
while the radial transformation and \cite[Theorem~C]{LiSun} give
\cref{cor:critical}. We then use the energy of radial solutions to
prove \cref{thm:parameters} and compare constants for different weights.

\subsection{Compactness and stability}

\begin{lemma}\label{lem:transform}
For radial $u\in H_0^1(D)$, define
$U(s)=\sqrt a\,u(s^{1/a})$. Then
\begin{equation}\label{eq:transform-identities}
 \int_D|\nabla U|^2=\int_D|\nabla u|^2,
 \qquad \int_D |x|^{-2\beta}e^{\alpha u^2}\,dx
                 =\frac1a\int_D e^{(\alpha/a) U^2}\,dx.
\end{equation}
The map is an isometry of the radial Dirichlet spaces. Both suprema
are attained within their radial subspaces, and hence
\begin{equation}\label{eq:sup-disk}
 S_\alpha=\frac1a\sup_{\substack{U\in H_0^1(D)\\\norm{U}\le1}}
                    \int_D e^{(\alpha/a)U^2}\,dx.
\end{equation}
Every maximizer has one sign. Positive maximizers are radial and strictly
decreasing. If $u$ is a positive maximizer with multiplier $\lambda$,
then $U$ solves
\begin{equation}\label{eq:Ueq}
 -U''-s^{-1}U'=\frac{\lambda}{a^2}Ue^{(\alpha/a)U^2},\qquad
 U'(0)=0,\quad U(1)=0.
\end{equation}
For radial perturbations the corresponding quadratic forms and tangent
conditions agree exactly under the same isometry.
\end{lemma}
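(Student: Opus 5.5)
I would prove \cref{lem:transform} in four steps: change variables in polar coordinates for the two identities, use symmetrization to reduce both suprema to radial functions, derive \eqref{eq:Ueq} by the chain rule, and transfer the quadratic forms. Write $u(x)=u(r)$, $U(s)=\sqrt a\,u(s^{1/a})$, so that $u(r)=a^{-1/2}U(r^a)$ and $u'(r)=a^{1/2}r^{a-1}U'(r^a)$. Then
\[
 \int_D|\nabla u|^2=2\pi\int_0^1 a\,r^{2a-1}U'(r^a)^2\,dr
 =2\pi\int_0^1 U'(s)^2 s\,ds,
\]
using $ds=a r^{a-1}dr$. Similarly, with $r^{-2\beta}r\,dr=r^{2a-1}dr=a^{-1}s\,ds$,
\[
 \int_D|x|^{-2\beta}e^{\alpha u^2}\,dx=\frac{2\pi}{a}\int_0^1e^{(\alpha/a)U^2}s\,ds .
\]
The map is linear and its inverse has the same form with $1/a$, so it is a bijective isometry of radial $H_0^1(D)$. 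The calculation is first done on smooth radial functions vanishing near $r=1$, and then extended by density, since both sides are continuous in the Dirichlet norm.

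Next I reduce the suprema to radial functions using Schwarz symmetrization. For $u\in H_0^1(D)$, the decreasing rearrangement $u^*$ of $|u|$ satisfies $\norm{u^*}\le\norm{u}$ by Pólya--Szegő. The weight $|x|^{-2\beta}$ is radially symmetric decreasing and $e^{\alpha u^2}-1$ is a nonnegative function of $|u|$. The Hardy--Littlewood inequality therefore gives $J_\alpha(u)\le J_\alpha(u^*)$, and the same holds for the unweighted functional.

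Hence both suprema may be taken over radial functions, and the identities give \eqref{eq:sup-disk}. Attainment follows from \cref{lem:subcompact} in the subcritical case, and at criticality from the known existence results \cite{CC,Flucher,CR,YZ}. A maximizer of the full problem is radial, so each supremum is attained within its radial subspace.

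That every maximizer has one sign is already in \cref{lem:subcompact}; its regularity argument applies at criticality. For radial symmetry of a positive maximizer $u$, I would use equality in Hardy--Littlewood: $J_\alpha(u)=J_\alpha(u^*)$, and $\norm{u^*}\le1$ forces $u^*$ to be a maximizer too. The weight is strictly decreasing when $\beta>0$, and strict equality then forces the level sets of $u$ to be centered balls. For $\beta=0$ this argument fails, and I would invoke the Brothers--Ziemer equality case instead. Since $u$ solves \eqref{eq:EL}, the set $\{\nabla u^*=0\}\cap\{0<u^*<\max u\}$ is null, and Brothers--Ziemer applies. Alternatively, Gidas--Ni--Nirenberg applies for positive solutions with nonlinearity $|x|^{-2\beta}f(u)$, where the weight is nonincreasing in $|x|$. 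Either route gives radial symmetry and strict monotonicity, with $u'<0$ for $r>0$ coming from the ODE.

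For the equation, \eqref{eq:EL} in radial form reads $-(ru')'=\lambda r^{1-2\beta}ue^{\alpha u^2}$. Substituting $u=a^{-1/2}U(r^a)$ gives $ru'=a^{1/2}sU'(s)$ and $d/dr=a r^{a-1}\,d/ds$. Dividing by $a^{3/2}r^{a-1}\cdot s$ and using $r^{2-2\beta}=s^2$ yields \eqref{eq:Ueq}. The condition $U'(0)=0$ comes from boundedness of $U$ and the integrated form $sU'(s)=-\int_0^s(\cdots)\tau\,d\tau$.

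For the quadratic forms, apply the same isometry to radial $h$, setting $H(s)=\sqrt a\,h(s^{1/a})$. Norms and inner products are preserved, so $h\perp u$ if and only if $H\perp U$. The weighted term $\lambda\int|x|^{-2\beta}(1+2\alpha u^2)e^{\alpha u^2}h^2$ becomes $\frac{\lambda}{a^2}\int(1+2(\alpha/a)U^2)e^{(\alpha/a)U^2}H^2$. This is the $Q$ form of the unweighted problem with exponent $\alpha/a$ and multiplier $\lambda/a^2$, which is exactly the multiplier in \eqref{eq:Ueq}.

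I expect the main obstacle to be radial symmetry of every positive maximizer when $\beta=0$ at criticality. There Hardy--Littlewood is not strict, so the proof must rely on the Brothers--Ziemer or moving-plane argument.
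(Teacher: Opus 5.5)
Your proposal is correct and follows the paper's proof essentially step by step: the polar change of variables $s=r^a$, Schwarz rearrangement for \eqref{eq:sup-disk}, \cref{lem:subcompact} for the sign, the strictly decreasing weight (equality in the layer-cake/Hardy--Littlewood inequality) for $\beta>0$ and Gidas--Ni--Nirenberg for $\beta=0$, then direct substitution for \eqref{eq:Ueq} and for the quadratic forms. Your Brothers--Ziemer alternative also works, but the null-critical-set condition comes from the radial equation satisfied by $u^*$, which is itself a maximizer of norm one; it does not come from the equation for $u$.
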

\begin{proof}
The substitution $s=r^a$ gives $s\,ds=a r^{2a-1}\,dr$ and
$U_s=a^{-1/2}r^{1-a}u_r$, proving \eqref{eq:transform-identities}.
Schwarz rearrangement does not increase the Dirichlet norm or decrease
either exponential integral, proving \eqref{eq:sup-disk}.
Existence follows from attainment of the ordinary disk supremum
and rearrangement, for both subcritical exponents and the critical
case \cite{CC}.

Boundedness, continuity and constant sign follow from
\cref{lem:subcompact}, whose estimates near a fixed function also
hold at criticality. For $\beta>0$, the strictly decreasing weight
$r^{-2\beta}$ has its largest integral over a set of given measure
uniquely on the centered disk, so the layer-cake formula for
$e^{\alpha u^2}-1$ forces centered disk superlevel sets in the
equality case. For $\beta=0$, positivity and the moving-plane
theorem \cite{GNN} give radial symmetry, and in both cases the
radial equation gives strict decrease.

Substitution into that equation yields \eqref{eq:Ueq}. If
$g(s)=\sqrt a\,h(s^{1/a})$, then
\[
 \lambda\int_D r^{-2\beta}(1+2\alpha u^2)e^{\alpha u^2}h^2\,dx
  =\frac{\lambda}{a^2}\int_D(1+2(\alpha/a) U^2)e^{(\alpha/a) U^2}g^2\,dx.
\]
Together with the isometry, this proves the last assertion.
\end{proof}

\begin{lemma}\label{lem:criticalcompact}
For $\alpha=4\pi a$, every maximizing sequence in the closed unit ball
of $H_0^1(D)$ has a strongly convergent subsequence.
\end{lemma}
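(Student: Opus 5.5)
Take a maximizing sequence $u_j$ with $\norm{u_j}\le1$ and $J_{4\pi a}(u_j)\to S_{4\pi a}$. After passing to a subsequence we may assume $u_j\rightharpoonup u$ weakly in $H_0^1(D)$ and almost everywhere. The plan is to rule out concentration, so that the exponential integrals converge, and then to argue exactly as in \cref{lem:subcompact}. In that argument, once $J_{4\pi a}(u)=S_{4\pi a}>J_{4\pi a}(0)$, the rescaling argument forces $\norm{u}=1$. Weak convergence together with convergence of norms then gives strong convergence. The whole difficulty is therefore to prove $J_{4\pi a}(u_j)\to J_{4\pi a}(u)$.

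\textbf{Step 1: reduce to radial functions.} I would pass to Schwarz symmetrizations $u_j^*$. By \cref{lem:transform}, rearrangement does not increase the norm and does not decrease the weighted integral, so $(u_j^*)$ is again maximizing. The transform $U_j(s)=\sqrt a\,u_j^*(s^{1/a})$ converts it into a maximizing sequence for the ordinary critical disk problem $\sup\int_De^{4\pi U^2}$. Carleson--Chang \cite{CC} showed that the concentration limit of that problem equals $\pi(1+e)$. This is strictly below the supremum, since the supremum is attained and exceeds the concentration level. Hence a radial maximizing sequence cannot concentrate. Lions' concentration--compactness alternative then gives a nonzero weak limit $U$. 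If $\norm{U}<1$, Lions' improved integrability yields uniform $L^q$ bounds with $q>1$ on $e^{4\pi U_j^2}$, and Vitali's theorem gives convergence. If $\norm{U}=1$, convergence is strong directly. In either case $(U_j)$ converges strongly, and so does $(u_j^*)$ by the isometry.

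\textbf{Step 2: transfer back to the original sequence $u_j$.} This is the main obstacle, because rearrangement does not control the positions of the functions themselves. I would argue directly with a singular concentration--compactness principle. Either $(u_j)$ concentrates at a single point $x_0$, meaning $|\nabla u_j|^2\,dx\rightharpoonup\delta_{x_0}$ and $u_j\rightharpoonup0$, or the singular Lions-type lemma \cite{Liu20} gives $e^{4\pi a u_j^2}$ bounded in $L^q(|x|^{-2\beta}dx)$ for some $q>1$ on compact subsets, and Vitali applies as above.

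I then have to exclude concentration. At $x_0\neq0$ the effective exponent is $4\pi a<4\pi$ away from the origin, so the local integrals converge. In that case $J(u_j)\to J(0)=\int_D|x|^{-2\beta}\,dx<S_{4\pi a}$. At $x_0=0$ I would compare with the symmetrized sequence. Concentration of $u_j$ at the origin forces $\norm{u_j^*-0}$ to stay at norm one while $u_j^*\rightharpoonup0$. This contradicts the strong convergence of Step 1, whose limit has norm one and is nonzero.

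The delicate point is this last comparison: showing that $u_j\rightharpoonup0$ implies $u_j^*\rightharpoonup0$. I would get it from $L^2$ norms, since $\norm{u_j^*}_{L^2}=\norm{u_j}_{L^2}\to0$ by compactness of the embedding. That forces $u_j^*\to0$ in $L^2$, whereas the strong limit from Step 1 is nonzero. So the vanishing alternative is impossible, and the first paragraph completes the proof.
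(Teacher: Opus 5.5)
Your proposal is correct and is essentially the paper's argument. The rearranged, transformed sequence is controlled by the Carleson--Chang level $\pi(1+e)<S_{4\pi,0}$; note that this strict inequality comes from their test-function construction, not from attainment. Preservation of $L^2$ norms under rearrangement then rules out $u=0$. For $u\ne0$, a Lions-type bound on $|x|^{-2\beta}e^{q\alpha u_j^2}$ gives Vitali convergence. That bound is needed over all of $D$, not just on compact subsets. The paper proves it directly by choosing $b\in C_c^\infty(D)$ with $\limsup_j\norm{u_j-b}^2<1$, rather than citing a singular concentration--compactness principle.

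The case split on the concentration point in Step~2 is superfluous. Your claim that $4\pi a$ is subcritical away from the origin also fails at $\beta=0$. Neither point matters, because your $L^2$ argument excludes $u_j\rightharpoonup0$ regardless of where concentration occurs, so the proof stands.
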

\begin{proof}
Let $u_j\rightharpoonup u$ and $\int_D |x|^{-2\beta}e^{\alpha u_j^2}\,dx\to S_\alpha$.
Suppose first that $u=0$. Let $u_j^*$ be the Schwarz rearrangement of
$|u_j|$. Since $\norm{u_j^*}_{L^2}=\norm{u_j}_{L^2}\to0$ and
$\norm{u_j^*}\le1$, we have $u_j^*\rightharpoonup0$ in $H_0^1(D)$.
The transformed radial functions $U_j$ in \cref{lem:transform} likewise
converge weakly to zero, because that transformation is a bounded
linear isometry on the radial space. We claim that
\begin{equation}\label{eq:CC-level}
 \limsup_j\int_D e^{4\pi U_j^2}\,dx\le\pi(1+e)
                         <aS_{4\pi a}.
\end{equation}
To verify the first inequality, take a subsequence realizing the
limsup and then a further subsequence such that
$|\nabla U_j|^2\,dx\rightharpoonup\nu$ as measures on $\overline D$.
The measure $\nu$ has mass at most one, and if $\nu=\delta_0$,
the radial concentration estimate of Carleson--Chang \cite{CC}
gives the upper bound. To apply it to norms bounded by one, normalize
the sequence, whose norms converge to one in this case.

If $\nu\ne\delta_0$, then $\nu(\{0\})<1$. Choose $0<\rho<1$
with $\nu(\partial B_\rho)=0$ and $\nu(B_\rho)<1$. There is
$\delta>0$ such that, for all sufficiently large $j$,
\[
 \int_{B_\rho}|\nabla U_j|^2\,dx\le1-\delta.
\]
On each closed annulus away from the origin, radial compactness in
one dimension gives uniform convergence $U_j\to0$. Since $U_j$ is
nonnegative and nonincreasing, the function
\[
 W_j(x)=U_j(|x|)-U_j(\rho),\qquad x\in B_\rho,
\]
belongs to $H_0^1(B_\rho)$, is nonnegative, and has squared Dirichlet
norm at most $1-\delta$. Choose $q>1$ and $\varepsilon>0$ so that
$q(1+\varepsilon)(1-\delta)<1$. The inequality
\[
 U_j^2\le(1+\varepsilon)W_j^2
              +(1+\varepsilon^{-1})U_j(\rho)^2
 \quad\text{on }B_\rho
\]
and the ordinary Moser-Trudinger inequality imply
\[
 \sup_j\int_{B_\rho}e^{4\pi qU_j^2}\,dx<\infty.
\]
On $D\setminus B_\rho$ we have $0\le U_j\le U_j(\rho)\to0$.
Vitali's theorem therefore yields
$\int_D e^{4\pi U_j^2}\,dx\to\pi$, which also proves the upper bound
in \eqref{eq:CC-level}. Finally, the strict test-function inequality
of Carleson--Chang \cite{CC}, together with \eqref{eq:sup-disk}, gives
the strict inequality in \eqref{eq:CC-level}.

Rearrangement and \eqref{eq:transform-identities} contradict
maximality, so $u\ne0$.

Choose $b\in C_c^\infty(D)$ sufficiently close to $u$ in $H_0^1(D)$. Weak
convergence implies
\[
 \limsup_j\norm{u_j-b}^2
 \le1-\norm{u}^2+\norm{u-b}^2<1.
\]
Using $(u_j-b+b)^2\le(1+\varepsilon)(u_j-b)^2+
(1+\varepsilon^{-1})b^2$, choose $q>1$ and $\varepsilon>0$ such that
$q(1+\varepsilon)\limsup_j\norm{u_j-b}^2<1$. The singular
Moser-Trudinger inequality then gives
\[
 \sup_j\int_D |x|^{-2\beta} e^{q\alpha u_j^2}\,dx<\infty.
\]
Vitali's theorem gives convergence of the weighted exponential
integrals, making the limit a maximizer of norm one and hence
giving strong convergence.
\end{proof}

We next combine local estimates with compactness to prove global stability.

\begin{lemma}\label{lem:chart}
Let $\Omega\subset\R^2$ be a bounded connected $C^2$ domain containing
$0$. Fix $0\le\beta<1$, and let $v$ be a maximizer for
$0<\alpha\le4\pi(1-\beta)$.
For $h\perp v$ small in $H_0^1(\Omega)$, set
$u=(v+h)/(1+\norm{h}^2)^{1/2}$. This parametrization lies on the unit
sphere and satisfies $\norm{u-v}^2=\norm{h}^2+O(\norm{h}^4)$.
Moreover,
\begin{equation}\label{eq:chart-exp}
 S_\alpha-\int_\Omega |x|^{-2\beta}e^{\alpha u^2}\,dx
       =\frac{\alpha}{\lambda_v}Q_v[h]+O(\norm{h}^3).
\end{equation}
The functional is real analytic in a sufficiently small $H_0^1(\Omega)$-neighborhood
of $v$. The operator representing $Q_v$ on $v^\perp$ is the identity
minus a compact operator. Also $Q_v\ge0$ on $v^\perp$. If
$Q_v[h]=0$ with $h\perp v$ implies $h=0$, then $Q_v$ is coercive.
\end{lemma}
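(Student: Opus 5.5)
The chart first. For $h\perp v$ we have $\norm{v+h}^2=1+\norm{h}^2$, so $u$ lies on the unit sphere. Since
\[
 \norm{u-v}^2=2-2(1+\norm{h}^2)^{-1/2}=\norm{h}^2+O(\norm{h}^4),
\]
the distance claim is immediate.

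Real analyticity comes next. I would use the local exponential bounds from the proof of \cref{lem:subcompact}, which are valid at criticality as well. Split $v=b+r$ with $b\in C_c^\infty(\Omega)$ bounded and $\norm{r}$ small. Then $e^{A(v+k)^2}$ is bounded in $L^q(|x|^{-2\beta}dx)$ for fixed $A,q$ whenever $\norm{k}$ is small. Expanding $e^{\alpha(v+k)^2}=e^{\alpha v^2}\sum_n (2\alpha vk+\alpha k^2)^n/n!$ and bounding the $n$-linear terms by H\"older, using \cref{lem:weighted} and $|2vk+k^2|^n/n!\le$ pieces of $e^{\epsilon(|v|+|k|)^2}$-type majorants, gives a convergent power series of bounded multilinear forms in $k$ on a small ball. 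This yields analyticity, and in particular a $C^3$ bound with a uniform cubic Taylor remainder near $v$.

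For \eqref{eq:chart-exp}, write $u=v+k$ with
\[
 k=(v+h)(1+\norm{h}^2)^{-1/2}-v=h-\tfrac12\norm{h}^2v+O(\norm{h}^3).
\]
Taylor expansion of $J_\alpha$ at $v$ to second order gives
\[
 J_\alpha(u)=S_\alpha+DJ(v)[k]+\tfrac12D^2J(v)[k,k]+O(\norm{k}^3).
\]
Here $DJ(v)[k]=2\alpha\int|x|^{-2\beta}ve^{\alpha v^2}k=2\alpha\lambda_v^{-1}\ip{v}{k}$ by \eqref{eq:EL}, and
\[
 D^2J(v)[h,h]=2\alpha\int|x|^{-2\beta}(1+2\alpha v^2)e^{\alpha v^2}h^2 .
\]
Using $\ip{v}{h}=0$ gives $\ip{v}{k}=-\tfrac12\norm{h}^2+O(\norm{h}^3)$. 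Combining these,
\[
 S_\alpha-J_\alpha(u)=\frac{\alpha}{\lambda_v}\Bigl(\norm{h}^2-\lambda_v\int|x|^{-2\beta}(1+2\alpha v^2)e^{\alpha v^2}h^2\Bigr)+O(\norm{h}^3),
\]
which is \eqref{eq:chart-exp}.

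For the operator statement, the form $h\mapsto\int|x|^{-2\beta}(1+2\alpha v^2)e^{\alpha v^2}h^2$ is represented by a compact operator. The weight $|x|^{-2\beta}(1+2\alpha v^2)e^{\alpha v^2}$ lies in $L^s$ for some $s>1$, because $v$ is bounded by \cref{lem:subcompact}. By the compact embedding $H_0^1\hookrightarrow L^{2s'}$, the form is then compact, and compressing to $v^\perp$ preserves compactness. Nonnegativity of $Q_v$ on $v^\perp$ follows from maximality: if $Q_v[h]<0$, then $th$ with $t\to0$ would make the deficit negative by \eqref{eq:chart-exp}.

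Finally, coercivity from the trivial kernel follows from the Fredholm structure $I-K$ with $K$ compact self-adjoint on $v^\perp$ and $Q_v\ge0$. The spectrum of $I-K$ consists of $1$ together with eigenvalues $1-\kappa_j$ accumulating only at $1$. Each of these is nonnegative, and the kernel assumption excludes $0$. Hence the infimum of the spectrum is positive, which is coercivity.

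The main obstacle I expect is the analyticity and uniform cubic remainder at the critical exponent $\alpha=4\pi(1-\beta)$. There the smallness of $\norm{r}$ in the splitting must be chosen so that $q\alpha(1+\epsilon)(\norm{r}+\norm{k})^2<4\pi(1-\beta)$. I would handle this exactly as in \cref{lem:subcompact}, absorbing all polynomial factors into slightly larger exponents.
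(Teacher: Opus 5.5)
Your proposal is correct and follows essentially the same route as the paper. Both use the chart computation, the second-order Taylor expansion at $v$ with the linear term evaluated through \eqref{eq:EL}, compactness of the potential term from boundedness of $v$ and the weighted embedding, nonnegativity from maximality, and the spectral argument for coercivity; the only minor difference is that you prove analyticity by bounding the homogeneous Taylor terms directly through the majorant $e^{\alpha(|v|+|k|)^2}$, whereas the paper complexifies and uses local boundedness together with holomorphy on complex lines, and both routes are standard here.
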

\begin{proof}
Boundedness of $v$ and the singular Moser-Trudinger inequality,
applied with $(v+h)^2\le2\norm{v}_\infty^2+2h^2$, control the
integral power series for small $\norm{h}$, including polynomial
factors and derivatives after reducing the radius.
The same bound with $|h|$ applies to complex directions, so the
complexified integral is locally bounded and holomorphic on each
affine complex line, hence Fr\'echet holomorphic and real analytic
on the real space. Since $h\perp v$, the linear term vanishes by
\eqref{eq:EL}, and expanding the exponential and normalization gives
\begin{align*}
 \int_\Omega |x|^{-2\beta}e^{\alpha u^2}\,dx
 &=S_\alpha-\frac{\alpha}{\lambda_v}\norm{h}^2\\
 &\quad+\alpha\int_\Omega |x|^{-2\beta}
             (1+2\alpha v^2)e^{\alpha v^2}h^2\,dx+O(\norm{h}^3).
\end{align*}
This proves \eqref{eq:chart-exp}, while boundedness of $v$ and
\cref{lem:weighted} give compactness of the potential term.
Maximality gives nonnegativity, so the spectral theorem yields
a positive lower bound when the kernel is trivial.
\end{proof}

\begin{proposition}
\label{prop:nonradial}
For each positive maximizer $v$ on $D$ and every
$0<\alpha\le4\pi(1-\beta)$, there exists $c>0$ such that
\begin{equation}\label{eq:nonradialcoercivity}
 Q_v[h]\ge c\norm{h}^2
\end{equation}
whenever $h$ has zero angular average. Such functions belong to $v^\perp$.
Thus every $h\perp v$ with $Q_v[h]=0$ is radial.
\end{proposition}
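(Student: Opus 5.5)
We decompose $h$ into angular Fourier modes. Since $v$ is radial by \cref{lem:transform}, the weight $W(r)=\lambda_v r^{-2\beta}(1+2\alpha v^2)e^{\alpha v^2}$ is radial and bounded by a multiple of $r^{-2\beta}$, because $v$ is bounded. Both $\norm{h}^2$ and $\int_D W h^2\,dx$ then split orthogonally over the modes $h_k(r)e^{ik\theta}$. A function with zero angular average has $h_0=0$. It also lies in $v^\perp$, because $\ip{v}{h}$ only sees the $k=0$ mode. So it suffices to show that, for each $k\ge1$, the form
\[
 Q^{(k)}[g]=\int_0^1\Bigl(g'^2+\frac{k^2}{r^2}g^2\Bigr)r\,dr-\int_0^1 W g^2 r\,dr
\]
is bounded below by $c\int_0^1(g'^2+k^2g^2/r^2)r\,dr$, with $c$ independent of $k$. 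Since the $k^2/r^2$ term grows with $k$, it is enough to treat $k=1$ with a margin. For large $k$ a bound uniform in $k$ also follows directly from $\int W g^2 r\,dr\le C\int r^{1-2\beta}g^2\,dr\le (C/k^2)\int k^2 r^{-1} g^2\,dr$.

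For $k=1$, I would use the derivative of $v$ as a test function. Differentiate the equation $-\Delta v=\lambda_v|x|^{-2\beta}ve^{\alpha v^2}$ in $r$. Since $v$ is radial, the function $w=-v'(r)>0$ on $(0,1]$ (strict decrease, \cref{lem:transform}) satisfies
\[
 -w''-\frac{w'}{r}+\frac{w}{r^2}=W w+\frac{2\beta}{r}\lambda_v r^{-2\beta}ve^{\alpha v^2}\ \ge\ W w .
\]
So $w$ is a positive supersolution of the $k=1$ operator $L_1=-\partial_r^2-r^{-1}\partial_r+r^{-2}-W$. By the Picone/ground-state identity, writing $g=w\eta$ gives
\[
 Q^{(1)}[g]=\int_0^1 w^2\eta'^2 r\,dr+\int_0^1 (L_1w)w\eta^2 r\,dr
 -\bigl[r\,w w'\eta^2\bigr]_0^1 .
\]
The interior terms are nonnegative. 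One must check the boundary terms: at $r=1$ we have $g(1)=0$ while $w(1)=-v'(1)>0$ by Hopf, so $\eta(1)=0$. At $r=0$ one needs the behaviour of $w$ near the origin, namely $w\sim c r^{1-2\beta}$ from the singular equation, against $g=O(r)$ in the admissible class. Hence $Q^{(1)}\ge0$. Moreover equality forces $\eta$ constant and $L_1w\,\eta\equiv0$. Since $g(1)=0$, this gives $g\equiv0$ (for $\beta>0$ the supersolution is strict, so this is immediate; for $\beta=0$ it follows from $\eta(1)=0$).

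Strict positivity then gives coercivity. On each mode the form is the identity minus a compact operator, as in \cref{lem:chart}. A nonnegative form of this type with trivial kernel is coercive by the spectral argument of that lemma, which yields a constant $c_1>0$ for $k=1$. For the higher modes, $Q^{(k)}[g]\ge Q^{(1)}[g]+(k^2-1)\int g^2 r^{-1}dr$, and this is at least a uniform fraction of the $k$-th energy. Summing over the modes gives \eqref{eq:nonradialcoercivity}.

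The final assertion follows. If $h\perp v$ and $Q_v[h]=0$, then since $Q_v\ge0$ on $v^\perp$, $h$ minimizes there, and the mode splitting gives $Q_v[h]=Q_v[h_0]+Q_v[h-h_0]$. Both pieces are nonnegative ($h_0\perp v$ as well), so $Q_v[h-h_0]=0$, and coercivity forces $h=h_0$, which is radial.

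The main obstacle is the boundary analysis at the origin when $\beta>0$. There $w=-v'$ may blow up or vanish like $r^{1-2\beta}$, so one has to justify the Picone identity, for instance by cutting off near $r=0$ and showing that the boundary term $r w w'\eta^2$ tends to zero. The critical case $\alpha=4\pi(1-\beta)$ also needs care, but it only requires boundedness of $v$, which is available.
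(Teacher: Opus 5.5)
Your argument is correct, but it takes a different route from the paper's. Your steps are as follows.
\begin{itemize}
\item You stay in the original variable $r$. There $w=-v'$ is only a positive supersolution of the mode-one operator; the extra term $2\beta\lambda_v r^{-2\beta-1}ve^{\alpha v^2}$ comes from differentiating the weight.
\item You get nonnegativity and a trivial kernel from Picone's ground-state identity.
\item You upgrade to coercivity on the first mode using the identity-minus-compact structure.
\item You get uniformity in $k$ from the extra term $(k^2-1)\int_0^1 g^2r^{-1}\,dr$.
\end{itemize}

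The paper instead first applies the substitution $s=r^{1-\beta}$ of \cref{lem:transform}. The weight then disappears, $\chi=-U'$ solves the transformed mode equation exactly, and a mode of frequency $k$ acquires effective frequency $k/a\ge1$. The paper then compares $\chi$, which has $\chi(1)>0$, with the positive principal eigenfunction $\zeta$ of $-(s\zeta')'+s^{-1}\zeta=\rho sV\zeta$ through a Wronskian identity. This gives $\rho_1>\lambda_v/a^2$ and the explicit bound $Q_v[h]\ge(1-\lambda_v/(a^2\rho_1))\norm{h}^2$ on all nonradial modes at once.

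What each route buys:
\begin{itemize}
\item The paper's route never meets the singular behaviour $w\sim r^{1-2\beta}$ at the origin, since both $\chi$ and $\zeta$ are $O(s)$.
\item It produces a quantitative constant built from the continuous quantities $\lambda_v$ and $\rho_1$. This is exactly what is reused in the proof of \cref{thm:parameters}\textup{(iii)} to get nonradial coercivity uniform in $\alpha$ near the critical endpoint. Your compactness constant would need a separate argument for that.
\item Your route needs no change of variables or auxiliary eigenvalue problem, and it shows directly that the mode-one comparison is strict when $\beta>0$.
\end{itemize}

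Two small points:
\begin{itemize}
\item The boundary term in your Picone identity enters as $+[rww'\eta^2]_0^1$, not with a minus sign. This is harmless, since the term vanishes.
\item Do not assume $g=O(r)$, which fails for general finite-energy profiles. It is cleaner to prove the identity for $g\in C_c^\infty((0,1))$, which is dense in each mode's energy space because $\int_0^1 g^2r^{-1}\,dr<\infty$. Then treat the equality case by passing to the limit in $\int_\epsilon^1 w^2\eta'^2r\,dr\le Q^{(1)}[g]$, where $w$ is bounded below on $[\epsilon,1]$.
\end{itemize}
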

\begin{proof}
Use \cref{lem:transform}, and write
$V(s)=(1+2(\alpha/a) U(s)^2)e^{(\alpha/a) U(s)^2}$. For a cosine or sine mode of
angular frequency $k\ge1$, set
$h(r,\theta)=a^{-1/2}g(r^a)\cos(k\theta)$, or the corresponding sine.
A direct calculation gives
\begin{align}
 \norm{h}^2&=\pi\int_0^1
       \left(s|g'|^2+\frac{k^2}{a^2s}g^2\right)ds,\label{eq:mode-energy}\\
 Q_v[h]&=\pi\int_0^1
       \left(s|g'|^2+\frac{k^2}{a^2s}g^2-\frac{\lambda_v}{a^2} sVg^2\right)ds.
       \label{eq:mode-Q}
\end{align}
Let $\rho_1$ be the first Dirichlet eigenvalue of
\begin{equation}\label{eq:mode1}
 -(s\zeta')'+s^{-1}\zeta=\rho sV\zeta,
 \qquad \zeta(1)=0,
\end{equation}
with the finite-energy condition at $0$. Its first eigenfunction
$\zeta$ is positive in $(0,1)$ and $\zeta'(1)<0$.
The function $\chi=-U'>0$ solves the same differential equation with
$\rho=\frac{\lambda_v}{a^2}$, but $\chi(1)>0$. Near $0$, both functions are $O(s)$.
Green's identity therefore gives
\begin{equation}\label{eq:wronskian}
 (\rho_1-\frac{\lambda_v}{a^2})\int_0^1sV\chi\zeta\,ds
                    =-\chi(1)\zeta'(1)>0.
\end{equation}
Thus $\rho_1>\frac{\lambda_v}{a^2}$, and $k/a\ge1$ bounds the
Rayleigh quotient in \eqref{eq:mode-energy} below by $\rho_1$, giving
\[
 Q_v[h]\ge\left(1-\frac{\lambda_v}{a^2\rho_1}\right)\norm{h}^2
\]
on every nonradial mode, and summing the orthogonal Fourier modes
proves \eqref{eq:nonradialcoercivity}. Since the radial and nonradial
parts are orthogonal for both $Q_v$ and the Dirichlet form,
$Q_v[h]=0$ with $h\perp v$ forces the nonradial part to vanish.
\end{proof}

\begin{remark}\label{rem:constrained-coercivity}
Coercivity on the tangent space $v^\perp$ follows by combining the
nonradial estimate in \cref{prop:nonradial}, with coefficient
$1-\lambda_v/(a^2\rho_1)$, and radial constrained nondegeneracy
\eqref{eq:RN}, characterized by \cref{prop:shooting}.
At the critical exponent, the radial bound follows from
\cite[Theorem~C and Proposition~7.5]{LiSun} through the change of
variables in \cref{lem:transform}.
\end{remark}

When the second variation vanishes in some directions, we reduce the
estimate to those directions and apply the finite-dimensional
\L{}ojasiewicz inequality \cite[Section~6]{BM}.

\begin{lemma}\label{lem:analytic-stability}
Let $f$ be real analytic near $0$ in a Hilbert space, with
$f\ge0$, $f(0)=0$, $Df(0)=0$. Suppose $D^2f(0)$ is nonnegative,
has a finite-dimensional kernel, and is coercive on its orthogonal
complement. Let $Z$ consist of the points in a sufficiently small
neighborhood of $0$ where $f=0$. Then there exist $C>0$ and finite
$p\ge2$ such that $f(x)\ge C\dist(x,Z)^p$ for all $x$ sufficiently close to $0$.
If the kernel is trivial, one may take $p=2$.
\end{lemma}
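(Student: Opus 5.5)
We reduce to finitely many dimensions by a Lyapunov--Schmidt reduction and then apply the finite-dimensional \L{}ojasiewicz inequality \cite[Section~6]{BM}. Write the Hilbert space as $K\oplus K^\perp$, where $K=\ker D^2f(0)$ is finite-dimensional, and let $\Pi$ be the orthogonal projection onto $K^\perp$. By hypothesis, $\Pi D^2f(0)|_{K^\perp}\ge c\Id$ with $c>0$. We apply the analytic implicit function theorem to $(k,y)\mapsto\Pi Df(k+y)=0$ near $(0,0)$. This gives a real analytic map $y=g(k)$ with $g(0)=0$ and $Dg(0)=0$. Moreover, for $|k|$ small, $g(k)$ is the unique minimizer of $y\mapsto f(k+y)$ on a small ball, since $f(k+\cdot)$ is uniformly strictly convex there by continuity of $D^2f$. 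Set $\varphi(k)=f(k+g(k))$. This is a nonnegative real analytic function on a neighbourhood of $0\in K$, with $\varphi(0)=0$.

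The quadratic comparison comes next. Taylor expansion of $y\mapsto f(k+y)$ about its minimizer $g(k)$, using the uniform convexity, gives
\[
 f(k+y)\ge\varphi(k)+\tfrac c4\norm{y-g(k)}^2
\]
for $k,y$ small. We also record the structure of the zero set. If $f(k+y)=0$, then $\varphi(k)\le0$, so $\varphi(k)=0$ and $y=g(k)$. Conversely $\varphi(k)=0$ gives $f=0$ at $k+g(k)$. Hence $Z=\{k+g(k):\varphi(k)=0\}$ near $0$, and the zero set of $\varphi$ is $Z_\varphi=\{k:\varphi(k)=0\}$.

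The main step is the finite-dimensional estimate, which we expect to be the only nontrivial point. On the finite-dimensional space $K$, the \L{}ojasiewicz distance inequality for the nonnegative analytic function $\varphi$ gives $C_1>0$ and an exponent $p_0$ with
\[
 \varphi(k)\ge C_1\dist(k,Z_\varphi)^{p_0}
\]
near $0$. After shrinking $C_1$ we may take $p=\max\{p_0,2\}$, since the distances are small. To transfer this to $Z$, fix $k_0\in Z_\varphi$ nearest to $k$. The point $k_0+g(k_0)$ lies in $Z$, and $g$ is Lipschitz near $0$. For $x=k+y$ we therefore get
\[
 \dist(x,Z)\le|k-k_0|+\norm{g(k)-g(k_0)}+\norm{y-g(k)}\le C\bigl(\dist(k,Z_\varphi)+\norm{y-g(k)}\bigr).
\]
Combining with the two lower bounds gives
\[
 f(x)\ge C_1\dist(k,Z_\varphi)^p+\tfrac c4\norm{y-g(k)}^2\ge C\dist(x,Z)^p,
\]
using $p\ge2$ and smallness, so that $s^2\ge s^p$ for $s\le1$.

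If $K=\{0\}$, the function $\varphi$ is defined on a point, so $Z=\{0\}$. The quadratic bound then reads $f(x)\ge\tfrac c4\norm{x}^2$, which is the case $p=2$. The only care needed is that the implicit function theorem be applied in the analytic category, so that $\varphi$ is analytic and the finite-dimensional \L{}ojasiewicz theorem applies.
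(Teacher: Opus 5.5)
Your proposal is correct and follows the paper's proof essentially step for step. The shared steps are the analytic Lyapunov--Schmidt reduction $y=g(k)$ solving $\Pi Df(k+y)=0$, and the quadratic lower bound $f(k+y)\ge\varphi(k)+c\norm{y-g(k)}^2$, which identifies $Z$ with the graph of $g$ over $\{\varphi=0\}$. They then apply the finite-dimensional \L{}ojasiewicz inequality to $\varphi$ and use the Lipschitz transfer $\dist(x,Z)\le C\bigl(\dist(k,Z_\varphi)+\norm{y-g(k)}\bigr)$, with the trivial-kernel case giving $p=2$ directly.
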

\begin{proof}
Write $x=k+z$ with $k$ in the kernel $K$ and $z\in K^\perp$.
The analytic implicit function theorem solves
$P_{K^\perp}Df(k+z)=0$ as $z=\Psi(k)$ near $0$.
The Hessian remains uniformly positive in the $K^\perp$ directions, giving
\begin{equation}\label{eq:LS-lower}
 f(k+z)\ge g(k)+c\norm{z-\Psi(k)}^2,
 \qquad g(k)=f(k+\Psi(k))\ge0.
\end{equation}
Since $g\ge0$, \eqref{eq:LS-lower} and the definition of $g$ show that
$f(k+z)=0$ exactly when $z=\Psi(k)$ and $g(k)=0$.
The finite-dimensional analytic distance
inequality gives $g(k)\ge c\dist(k,\{g=0\})^p$ for some finite
$p\ge2$, after increasing the exponent if needed. If $g$ vanishes
identically, the distance is zero and \eqref{eq:LS-lower} suffices.
On a smaller neighborhood, $\Psi$ is Lipschitz, so
\[
 \dist(k+z,Z)\le C\bigl(\dist(k,\{g=0\})
                            +\norm{z-\Psi(k)}\bigr).
\]
For $p\ge2$ and small $z-\Psi(k)$, the squared norm bounds its
$p$th power, so these inequalities and \eqref{eq:LS-lower} give
the estimate. When $K=\{0\}$, Taylor's theorem gives it with $p=2$.
\end{proof}

\begin{proof}[Proof of the stability assertions in \cref{thm:disk}]
Compactness follows from \cref{lem:subcompact,lem:criticalcompact}.
Near each maximizer, \cref{lem:chart} expresses the deficit as an
analytic function vanishing exactly at maximizers, so
\cref{lem:analytic-stability} applies. Distances in these coordinates
are comparable to Dirichlet distances, and allowing all of
$\M_\alpha$ only decreases the distance.

Compactness of $\M_\alpha$ gives a finite cover by smaller
neighborhoods with local estimates, which share one bound after
taking the largest exponent and reducing the constants.
Outside this cover the deficit has a positive lower bound, since
otherwise a maximizing sequence would stay away from $\M_\alpha$,
contradicting compactness. The distance is bounded on the sphere,
so the finite-power estimate holds globally there.

If \eqref{eq:RN} holds, split each tangent vector into radial
and zero-angular-average parts, both tangent and orthogonal for
the Dirichlet form and $Q_v$. Condition \eqref{eq:RN} and
\cref{prop:nonradial} then give a trivial constrained kernel at
every positive maximizer, and also at negative maximizers because
$Q_{-v}=Q_v$ and $(-v)^\perp=v^\perp$.
The preceding argument therefore applies with exponent $2$.

To pass to the closed ball, write $u=tz$, $\norm{z}=1$, as in
\cref{lem:ball}. Convexity and the sphere estimate give
\begin{equation}\label{eq:radial-convex}
 \begin{aligned}
 S_\alpha-\int_D |x|^{-2\beta}e^{\alpha t^2z^2}\,dx
 &\ge Ct^2\dist(z,\M_\alpha)^p\\
 &\quad+(1-t^2)\left(S_\alpha-\int_D |x|^{-2\beta}\,dx\right).
 \end{aligned}
\end{equation}
For $t\ge1/2$, the right side controls
$\dist(z,\M_\alpha)^p+(1-t)^p$ and hence
$\dist(tz,\M_\alpha)^p$ by the triangle inequality.
For $t<1/2$, including $u=0$, the deficit has a fixed positive
lower bound and the distance is bounded, proving
\eqref{eq:finitepower} and its quadratic version under \eqref{eq:RN}.
\end{proof}

\begin{proof}[Proof of \cref{cor:critical}]
By \cite[Theorem~C]{LiSun}, the ordinary critical maximizers are
exactly $\pm u_*$, with a coercive constrained Hessian at $u_*$.
Every positive weighted maximizer is radial by \cref{lem:transform}
and maps to $u_*$ under \eqref{eq:radial-transform}. Conversely,
$v_\beta(r)=a^{-1/2}u_*(r^a)$ attains the weighted supremum,
proving \eqref{eq:critical-profile}.

If $h\perp v_\beta$ is radial, set
$g(s)=\sqrt a\,h(s^{1/a})$. By \cref{lem:transform}, we have
$g\perp u_*$, $\norm{g}=\norm{h}$, and the multiplier of $u_*$ is
$\lambda_{v_\beta}/a^2$. Thus \cite[Proposition~7.5]{LiSun} gives
\[
 \begin{aligned}
 Q_{v_\beta}[h]
 &=\norm{g}^2-\frac{\lambda_{v_\beta}}{a^2}
       \int_D(1+8\pi u_*^2)e^{4\pi u_*^2}g^2\,dx\\
 &\ge c\norm{g}^2=c\norm{h}^2
 \end{aligned}
\]
for some $c>0$. For an arbitrary tangent vector let $h_0$ be its
angular average. The functions $h_0$ and $h-h_0$ are tangent and
orthogonal for both the Dirichlet form and $Q_{v_\beta}$.
Combining the radial bound with \cref{prop:nonradial}, and decreasing
$c$ if necessary, yields
\[
 Q_{v_\beta}[h]
 =Q_{v_\beta}[h_0]+Q_{v_\beta}[h-h_0]
 \ge c\bigl(\norm{h_0}^2+\norm{h-h_0}^2\bigr)=c\norm{h}^2.
\]
The resulting full constrained coercivity and \cref{thm:disk}
with $p=2$ give \eqref{eq:critical-stability} on the closed unit ball.
For any nonzero $h\perp v_\beta$, the curve
$u_t=(v_\beta+th)/(1+t^2\norm{h}^2)^{1/2}$ has deficit $O(t^2)$
and $\dist(u_t,\M_{4\pi a})\sim |t|\norm{h}$ by \cref{lem:chart},
which proves sharpness of the exponent two.
\end{proof}

\begin{corollary}\label{cor:general}
Let $\Omega\subset\R^2$ be a bounded connected $C^2$ domain containing
$0$, and fix $0\le\beta<1$. For each $0<\alpha<4\pi(1-\beta)$,
every maximizing sequence has a strongly convergent subsequence,
and there exist $C>0$ and finite $p\ge2$ such that
\eqref{eq:finitepower} holds. If $\partial\Omega$ is smooth, both
conclusions also hold at $\alpha=4\pi(1-\beta)$.
If $Q_v$ has trivial kernel on $v^\perp$ at every maximizer,
one may take $p=2$.
\end{corollary}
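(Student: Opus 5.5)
The plan is to copy the proof of the stability assertions in \cref{thm:disk}. The disk was used there in only two places, and both need replacing. The first is compactness of maximizing sequences. The second is the coercivity input from \cref{prop:nonradial}, which here is exchanged for the kernel hypothesis in the final sentence. All other ingredients are already domain-independent: \cref{lem:chart} (real analyticity near a maximizer, the expansion \eqref{eq:chart-exp}, and the Fredholm structure $Q_v=\Id-\text{compact}$ on $v^\perp$), \cref{lem:analytic-stability}, and the convexity-along-rays reduction from the sphere to the closed ball.

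For $0<\alpha<4\pi(1-\beta)$, compactness is exactly \cref{lem:subcompact}. At $\alpha=4\pi(1-\beta)$ on a smooth domain, I would run the argument of \cref{lem:criticalcompact} again. Given $u_j\rightharpoonup u$, the step ``$u\ne0$ implies strong convergence'' uses no symmetry: approximate $u$ by $b\in C_c^\infty$, apply the singular Moser--Trudinger inequality with exponent $q(1+\varepsilon)\limsup\norm{u_j-b}^2<1$, and conclude with Vitali. This carries over verbatim. The real task is to rule out $u=0$, that is, concentration. Concentration-compactness (Lions-type, in its singular form from \cite{Liu20}) shows that a maximizing sequence with $u=0$ has $|\nabla u_j|^2\rightharpoonup\delta_{x_0}$. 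If $x_0\neq0$, the weight is bounded near $x_0$ and the effective exponent $4\pi(1-\beta)<4\pi$ is subcritical there, so Vitali gives $J_\alpha(u_j)\to\int_\Omega|x|^{-2\beta}dx$. If $x_0=0$, the Carleson--Chang/Flucher-type concentration level for the singular problem, as established in \cite{CR,YZ}, bounds $\limsup J_\alpha(u_j)$ by $\int_\Omega|x|^{-2\beta}dx+\frac{\pi}{1-\beta}e^{1+4\pi(1-\beta)A_0}$, where $A_0$ is the regular part of Green's function at $0$. Those papers' test-function construction shows $S_\alpha$ exceeds this level. Smoothness of $\partial\Omega$ enters through the Green's function expansion they use. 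I expect this step to be the main obstacle, since it rests on importing the strict concentration-level inequality. The essential point is matching their normalization of the weighted functional to $J_\alpha$.

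With compactness established, $\M_\alpha$ is compact and nonempty. For each $v\in\M_\alpha$, the deficit in the chart $h\mapsto(v+h)/(1+\norm{h}^2)^{1/2}$ of \cref{lem:chart} is analytic and nonnegative, vanishes to second order at $h=0$, and has Hessian $\frac{\alpha}{\lambda_v}Q_v$. This Hessian is nonnegative, its kernel is finite-dimensional because $Q_v$ is identity minus compact, and it is coercive on the complement of that kernel. \cref{lem:analytic-stability} then gives a local estimate $\ge C\dist(u,Z_v)^p$, where $Z_v$ is the local zero set; $Z_v$ lies inside $\M_\alpha$, and chart distances are comparable to Dirichlet distances. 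Next, cover $\M_\alpha$ by finitely many such neighborhoods and take the largest exponent. Outside the union the deficit is bounded below by compactness of maximizing sequences, while the distance to $\M_\alpha$ is at most $2$; this gives the sphere estimate. Passing to the ball follows \eqref{eq:radial-convex}. For $t\ge1/2$, the term $(1-t^2)(S_\alpha-\int|x|^{-2\beta})$ together with the triangle inequality controls $\dist(tz,\M_\alpha)^p$. For $t<1/2$, the deficit has a fixed positive lower bound.

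Finally, if $Q_v$ has trivial kernel on $v^\perp$ at every maximizer, \cref{lem:chart} gives coercivity at each $v$. \cref{lem:analytic-stability} then applies with $p=2$ at every point of the finite cover, so the same gluing argument yields $p=2$ globally.
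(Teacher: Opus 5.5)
Your overall route is the paper's. You use \cref{lem:subcompact} below the critical exponent, and at the endpoint a concentration-compactness alternative together with a strict gap between $S_\alpha$ and the concentration levels. You then apply \cref{lem:chart,lem:analytic-stability} near each maximizer, take a finite cover of the compact set $\M_\alpha$, bound the deficit below away from it, and pass to the ball with \eqref{eq:radial-convex}.

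There is, however, a step that fails as written: endpoint compactness when $\beta=0$, which the statement includes. Your treatment of concentration at $x_0\neq0$ rests on $4\pi(1-\beta)<4\pi$, and this is false for $\beta=0$. There $\alpha=4\pi$, the weight is identically one, and the origin plays no special role. A maximizing sequence with $u_j\rightharpoonup0$ can concentrate at any interior point $x_0$. The Carleson--Chang--Flucher level there is $|\Omega|+\pi e^{1+4\pi A(x_0)}$, with $A(x_0)$ the regular part of the Green function at $x_0$ in your normalization. It is not the trivial value $|\Omega|$, so Vitali's theorem does not apply. Your origin formula controls only $A(0)$. Since $A$ need not attain its maximum at the origin, showing that $S_{4\pi}$ exceeds the level at $0$ does not exclude concentration elsewhere.

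The missing input is Flucher's theorem \cite{Flucher} for smooth domains. It bounds the concentration level at every point of $\overline\Omega$ by the formula above and shows that $S_{4\pi}$ is strictly larger than the maximal level $|\Omega|+\pi e^{1+4\pi\max A}$. The paper invokes this separately for $\beta=0$. For $0<\beta<1$ your argument matches the paper's. Your subcriticality argument away from the origin reproduces the trivial off-origin level of Csat\'o--Roy \cite{CR}, Proposition~7, and their Theorems~15, 16 and 21 supply the strict gap at the origin, which you also cite. Once the $\beta=0$ case is handled this way, the rest of your proposal goes through, including the choice $p=2$ when every constrained kernel is trivial.
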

\begin{proof}
Subcritical compactness is \cref{lem:subcompact}. At the critical
exponent, subtract $\int_\Omega |x|^{-2\beta}\,dx$ from the functional.
For $\beta=0$, Flucher's concentration estimate and strict
test-function inequality \cite{Flucher} place the maximal
concentration level strictly below the supremum.
For $0<\beta<1$, Csat\'o--Roy \cite[Proposition~7]{CR}, with
their singular exponent equal to $2\beta$, give concentration
level zero away from the origin. Their disk gap, transplantation
bound, and concentration formula
\cite[Theorems~15, 16 and 21]{CR} place the concentration level
at the origin strictly below the supremum as well.
The concentration-compactness alternative
\cite[Theorem~1]{Flucher}, \cite[Theorem~6]{CR} therefore gives
a weak limit at which the functional converges to its supremum.
The limit is nonzero and has norm one by amplitude rescaling,
so convergence is strong, proving existence and compactness of
the maximizer set.

The exponential estimate and elliptic regularity in
\cref{lem:subcompact} give boundedness of every maximizer, including
at the endpoint. Applying \cref{lem:chart,lem:analytic-stability}
near each maximizer and taking a finite cover gives the sphere
estimate, while compactness gives a positive deficit elsewhere.
The convexity argument in \eqref{eq:radial-convex}, with $\Omega$
in place of $D$, extends the estimate to the closed ball, with
exponent two when the constrained kernels are trivial.
\end{proof}

\flushbottom
\begingroup
\setlength{\abovedisplayskip}{6pt plus 2pt minus 2pt}
\setlength{\belowdisplayskip}{6pt plus 2pt minus 2pt}
\setlength{\abovedisplayshortskip}{0pt plus 2pt}
\setlength{\belowdisplayshortskip}{4pt plus 2pt minus 1pt}
\subsection{Nondegeneracy and parameter dependence}

The condition $Q_v[h]=0$ on $v^\perp$ amounts to linearizing
the equation while allowing the multiplier to vary: since
$Q_v\ge0$ on $v^\perp$, it is equivalent to the existence of
$c\in\R$ such that
\begin{equation}\label{eq:bordered}
 \begin{cases}
 -\Delta h=|x|^{-2\beta}e^{\alpha v^2}
                  \bigl[\lambda_v(1+2\alpha v^2)h+cv\bigr]&\text{in }D,\\
 h=0&\text{on }\partial D,\\
 \displaystyle\int_D\nabla v\cdot\nabla h\,dx=0.
 \end{cases}
\end{equation}
Indeed, nonnegativity makes the associated bilinear form vanish
on $(h,k)$ for every $k\in v^\perp$, so its value on arbitrary
$k$ is a scalar multiple of $\ip{v}{k}$.
By \eqref{eq:EL}, this equals a multiple of
$\lambda_v\int_D |x|^{-2\beta}ve^{\alpha v^2}k\,dx$,
giving \eqref{eq:bordered} after absorbing the factor into $c$.
Conversely, testing the first equation against $h\perp v$ gives
$Q_v[h]=0$.

For each $b>0$, let $y$ solve the initial-value problem
\begin{equation}\label{eq:shoot}
 y''+t^{-1}y'+ye^{y^2}=0,
 \qquad y(0)=b,\quad y'(0)=0.
\end{equation}
Let $R(b)>0$ be the first point where $y$ vanishes. Rescale
$[0,R(b)]$ to $[0,1]$ by setting
\begin{equation}\label{eq:shoot-energy}
 z(s)=y(R(b)s),\qquad
 \mathcal E(b)=2\pi\int_0^1s|z'(s)|^2\,ds.
\end{equation}
Then $z(0)=b$ and $z(1)=0$, and $\mathcal E(b)$ is the Dirichlet
energy of $z$ on the unit disk. Both $y$ and $z$ depend on $b$,
and $\partial_b$ denotes differentiation with respect to $b$.
While $y$ is positive,
\[
 t y'(t)=-\int_0^t\tau y(\tau)e^{y(\tau)^2}\,d\tau<0.
\]
Sturm comparison with $J_0$ gives a finite first zero, and the
identity above gives $y'(R(b))<0$.
Analytic dependence at $t=0$ follows from the integral equation
\begin{equation}\label{eq:radial-volterra}
 y(t;b)=b-\int_0^t s\log(t/s)y(s;b)e^{y(s;b)^2}\,ds.
\end{equation}
Near $b_0>0$, the right side is an analytic contraction on a
fixed ball in $C([0,T])$ for small $T$, since the kernel has norm
$T^2/4$ and $y\mapsto ye^{y^2}$ is analytic and locally Lipschitz.
The analytic implicit function theorem and the formula for $ty'(t)$
give analytic dependence on $b$ in $C^1([0,T])$, which continuation
extends past $R(b_0)$. The inequality $y'(R(b_0))<0$ makes $R(b)$
locally analytic, and the rescaled integral equation gives the same
dependence for $z$ in $C^1([0,1])$ and for $\mathcal E(b)$.

\begin{proposition}
\label{prop:shooting}
Let $v$ be a positive maximizer for the weighted disk problem,
set $b=\sqrt\alpha\,v(0)$, and define $z$ by \eqref{eq:shoot-energy}.
Then
\begin{equation}\label{eq:shoot-identification}
 z(s)=\sqrt\alpha\,v(s^{1/a}),\qquad
 \mathcal E(b)=\frac{\alpha}{a}.
\end{equation}
The radial constrained Hessian at $v$ has trivial kernel if and only if
\begin{equation}\label{eq:shoot-criterion}
 \mathcal E'(b)\ne0.
\end{equation}
\end{proposition}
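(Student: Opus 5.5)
The plan is to pass to the ordinary disk through \cref{lem:transform}, identify the transformed maximizer with a rescaled shooting solution, and then realize the radial constrained kernel as the $b$-derivative of the shooting family.

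\emph{Identification.} Let $U(s)=\sqrt a\,v(s^{1/a})$ as in \cref{lem:transform}, and set $z_v=\sqrt{\alpha/a}\,U=\sqrt\alpha\,v(s^{1/a})$. By \eqref{eq:Ueq},
\[
 -z_v''-s^{-1}z_v'=\kappa^2z_ve^{z_v^2},\qquad \kappa=\sqrt{\lambda_v}/a .
\]
Hence $\tilde y(t)=z_v(t/\kappa)$ solves \eqref{eq:shoot} with $\tilde y(0)=b$ and $\tilde y'(0)=0$. Uniqueness for the Volterra equation \eqref{eq:radial-volterra} gives $\tilde y=y(\cdot;b)$. Since $v>0$ in $D$ and $v=0$ on $\partial D$, the first zero is $R(b)=\kappa$, and therefore $z=z_v$. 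The energy identity follows from the isometry in \eqref{eq:transform-identities}:
\[
 \mathcal E(b)=\frac{\alpha}{a}\norm{U}^2=\frac{\alpha}{a}.
\]
This proves \eqref{eq:shoot-identification}.

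\emph{Kernel in transformed variables.} By the last assertion of \cref{lem:transform} and the bordered reformulation \eqref{eq:bordered}, a nonzero radial kernel element at $v$ corresponds to a nonzero radial $g\in H_0^1(D)$ and a constant $c$ satisfying
\[
 -\Delta g=R^2(1+2z^2)e^{z^2}g+c\,ze^{z^2},\qquad g(1)=0,\qquad \ip{z}{g}=0,
\]
with $R=R(b)$. I would classify all radial solutions of this system that have finite energy at the origin. The operator $L=-\Delta-R^2(1+2z^2)e^{z^2}$ has, among radial solutions of $Lg=0$, a one-dimensional regular family spanned by $p(s)=\partial_by(Rs;b)$, normalized by $p(0)=1$. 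The second homogeneous solution grows like $\log s$ and has infinite Dirichlet energy at $0$, so it is excluded.

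For the inhomogeneous term, differentiating $y(\kappa s)$ in $\kappa$ at $\kappa=R$ shows that $q(s)=sz'(s)$ is regular and satisfies $Lq=2R^2ze^{z^2}$. Hence every admissible radial solution has the form
\[
 g=Ap+Bq,\qquad c=2R^2B .
\]
The boundary condition $g(1)=0$ imposes one linear relation on $(A,B)$. It is nontrivial because $q(1)=z'(1)=Ry'(R)<0$. So the admissible solutions form exactly a one-dimensional space.

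\emph{The derivative $h=\partial_bz$ spans that space.} Differentiating $z(s;b)=y(R(b)s;b)$ gives $h=p+(R'/R)q$. Differentiating $z(1;b)=0$ gives $h(1)=0$, and $h(0)=1\neq0$. Moreover
\[
 \ip{z}{h}=\tfrac12\,\partial_b\norm{z}^2=\frac{\mathcal E'(b)}{4\pi}\cdot 2\pi=\tfrac12\mathcal E'(b).
\]
Consequently the radial constrained kernel is $\{0\}$ when $\mathcal E'(b)\neq0$, and it equals $\spanop\{h\}$, pulled back through the isometry, when $\mathcal E'(b)=0$. This gives \eqref{eq:shoot-criterion}.

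The analytic dependence on $b$ needed for these differentiations was established just before the statement.

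\emph{Main obstacle.} The step I expect to require the most care is the classification of regular solutions at the singular point $s=0$. This means showing that the homogeneous regular solutions form exactly one dimension, for instance through the Volterra form analogous to \eqref{eq:radial-volterra}, and verifying that $p$ and $q$ are genuinely $H^1$-regular. Once that is in place, the one boundary condition, together with $z'(1)<0$, pins the admissible space to $\spanop\{h\}$.
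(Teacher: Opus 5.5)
Your proposal is correct and follows essentially the paper's route: you identify $z$ through uniqueness for the radial initial-value problem with $R(b)^2=\lambda_v/a^2$, show that every regular radial solution of the bordered linearized problem lies in the span of $\partial_b z$ and $sz'$ with $z'(1)<0$ leaving only multiples of $\partial_b z$, and read off orthogonality to $z$ as $\mathcal E'(b)=0$. Your basis $\{p,q\}$ with $\partial_b z=p+(R'/R)q$ is only a reparametrization of the paper's step of subtracting $h(0)\,\partial_b z$ and invoking uniqueness for the regular initial-value problem.
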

\begin{proof}
By \cref{lem:transform}, $\sqrt\alpha\,v(s^{1/a})$ solves
$-\Delta z=(\lambda_v/a^2)ze^{z^2}$ with energy $\alpha/a$
and central value $b$. Uniqueness for the radial initial-value
problem and its first zero identify this function with $z$
and give $\lambda_v/a^2=R(b)^2$, proving \eqref{eq:shoot-identification}.

Set $L=-\Delta-R(b)^2(1+2z^2)e^{z^2}$ on radial functions.
Differentiation with respect to $b$ gives
\begin{equation}\label{eq:shoot-derivative}
 L\partial_b z=2R(b)R'(b)ze^{z^2},\qquad
 \partial_b z(0)=1,\quad \partial_b z(1)=0,
\end{equation}
and
\begin{equation}\label{eq:energy-derivative}
 \mathcal E'(b)=2\int_D\nabla z\cdot\nabla\partial_b z\,dx.
\end{equation}
For any radial pair $(h,c)$ with $Lh=cze^{z^2}$ and $h(1)=0$,
elliptic regularity gives a well-defined $h(0)$ and $h'(0)=0$.
Subtracting $h(0)$ times the pair in \eqref{eq:shoot-derivative}
leaves $\widetilde h(0)=0$ and
$L\widetilde h=\widetilde c\,ze^{z^2}$, while differentiating the
equation for $z$ gives
\[
 L(sz')=2R(b)^2ze^{z^2}.
\]
Both $sz'$ and its derivative vanish at the origin. Uniqueness for
the regular radial initial-value problem therefore yields
\[
 \widetilde h(s)=\frac{\widetilde c}{2R(b)^2}s z'(s).
\]
Since $\widetilde h(1)=0$ and $z'(1)<0$, we have
$\widetilde c=0$ and $\widetilde h=0$, so all radial pairs are
multiples of $(\partial_b z,2R(b)R'(b))$, including when $R'(b)=0$.
By \eqref{eq:energy-derivative}, a nonzero such solution satisfies
the orthogonality condition exactly when $\mathcal E'(b)=0$,
which proves the criterion in the original variables.
\end{proof}

\begin{remark}
Chen--Lu--Xue--Zhu \cite{CLXZ} study uniqueness at a prescribed
multiplier. Here the constraint prescribes the Dirichlet energy,
as in Malchiodi--Martinazzi \cite{MM}. By \eqref{eq:shoot-criterion},
radial nondegeneracy is equivalent to $\mathcal E'(b)\ne0$.
\end{remark}

\begin{remark}
At $\alpha=4\pi(1-\beta)$, the energy level is $\mathcal E(b)=4\pi$.
By \eqref{eq:critical-profile}, the critical maximizer corresponds to
$b_*=\sqrt{4\pi}\,u_*(0)$, independent of $\beta$.
The initial value and normalized energy in
\cite[Section~7.3]{LiSun} correspond to $b$ and
$\mathcal E(b)/(4\pi)$ here. Their Corollary~7.3 gives
$\mathcal E'(b_*)>0$, which yields the radial nondegeneracy in
\cref{cor:critical} by \cref{prop:shooting}.
\end{remark}
\endgroup

We now let the exponent vary. Write $J_{\tau,0}(U)=\int_D e^{\tau U^2}\,dx$,
$S_{\tau,0}$, and $\M_{\tau,0}$ for the unweighted disk functional,
its supremum, and its maximizer set. By \cref{lem:transform},
positive weighted maximizers correspond to the positive elements of
$\M_{\alpha/a,0}$.

\begin{lemma}\label{lem:parameter-compact}
Suppose that $0<\tau_j\le4\pi$, $\tau_j\to\tau\in(0,4\pi]$,
$\norm{U_j}\le1$, and
$S_{\tau_j,0}-J_{\tau_j,0}(U_j)\to0$. Then a subsequence converges
strongly in $H_0^1(D)$ to an element of $\M_{\tau,0}$.
If each $U_j$ is a positive maximizer, convergence also holds in
$C^1(\overline D)$ and the multipliers converge.
Consider all positive maximizers for $0<\tau\le4\pi$, and include
the normalized positive first Dirichlet eigenfunction at $\tau=0$.
The resulting set of pairs $(\tau,U)$ is compact in
$[0,4\pi]\times C^1(\overline D)$.
\end{lemma}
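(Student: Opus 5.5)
We will prove the three assertions in the order they are stated. The first is a compactness statement for almost-maximizing sequences when the exponent varies, and it reduces to the fixed-exponent arguments in \cref{lem:subcompact,lem:criticalcompact}.

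\textbf{Step 1: a weak limit that attains $S_{\tau,0}$.}
Pass to a subsequence with $U_j\rightharpoonup U$ weakly and almost everywhere. We first record that $\tau\mapsto S_{\tau,0}$ is continuous on $(0,4\pi]$.
\begin{itemize}
\item It is nondecreasing in $\tau$.
\item Left continuity at any $\tau$ follows because $J_{\tau',0}(W)\to J_{\tau,0}(W)$ for every fixed $W$ as $\tau'\uparrow\tau$.
\item Right continuity at $\tau<4\pi$ follows from uniform integrability of $e^{\tau'W^2}$ for $\tau'$ slightly above $\tau$, which is \cref{lem:calculus}-type control below the critical threshold.
\end{itemize}
Consequently $J_{\tau_j,0}(U_j)\to S_{\tau,0}$.

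There are two cases.
\begin{itemize}
\item If $\tau<4\pi$, choose $q>1$ with $q\sup_j\tau_j<4\pi$ along the tail. Then $e^{\tau_jU_j^2}$ is bounded in $L^q(D)$, and Vitali's theorem gives $J_{\tau_j,0}(U_j)\to J_{\tau,0}(U)$.
\item If $\tau=4\pi$, we repeat \cref{lem:criticalcompact}. First replace $U_j$ by the Schwarz rearrangement of $|U_j|$; this does not increase the norm, does not decrease the deficit-relevant integral, and keeps the same $L^2$ limit, so we may assume the $U_j$ are radial, nonnegative and nonincreasing. If $U=0$, the concentration alternative gives $\limsup_j J_{\tau_j,0}(U_j)\le\limsup_j J_{4\pi,0}(U_j)\le\pi(1+e)<S_{4\pi,0}$. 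Here we use monotonicity of $J$ in $\tau$ and the Carleson--Chang bound, and this contradicts $J_{\tau_j,0}(U_j)\to S_{4\pi,0}$. So $U\neq0$, and the splitting $U_j=(U_j-b)+b$ with $b\in C_c^\infty(D)$ from that proof gives uniform $L^q$ bounds. Vitali's theorem then applies.
\end{itemize}

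In either case $J_{\tau,0}(U)=S_{\tau,0}>\pi$. Amplitude rescaling forces $\norm{U}=1$. Since $\norm{U_j}\le1$, we get $\norm{U_j}\to1=\norm{U}$, and weak convergence upgrades to strong convergence with $U\in\M_{\tau,0}$.

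One gap remains when $\tau=4\pi$ and the original $U_j$ are not radial. In that case the weak limit of the original sequence must still be identified.
\begin{itemize}
\item We apply the nonradial concentration argument of \cref{lem:criticalcompact} directly: if the weak limit were zero, the rearranged sequence would also converge weakly to zero, which is excluded by the rearranged case above.
\item The splitting argument needs no symmetry, so it then applies to the original sequence.
\end{itemize}

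\textbf{Step 2: $C^1$ convergence of positive maximizers and their multipliers.}
Suppose each $U_j$ is a positive maximizer, with multiplier $\lambda_j^{-1}=\int_D U_j^2e^{\tau_jU_j^2}\,dx$. The uniform $L^q$ bounds from Step 1 give $\lambda_j\to\lambda$, the multiplier of $U$. The right-hand sides $\lambda_jU_je^{\tau_jU_j^2}$ are then bounded in $L^p$ for some $p>1$ and converge in $L^p$.

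To reach $C^1$, we bootstrap.
\begin{itemize}
\item Elliptic $W^{2,p}$ estimates give convergence in $W^{2,p}$, hence in $C^0$.
\item Uniform convergence makes the right-hand sides converge in $L^\infty$.
\item $W^{2,p}$ estimates for all finite $p$ then give $C^{1}(\overline D)$ convergence.
\end{itemize}
Alternatively, one can use the radial ODE for \eqref{eq:Ueq} together with the integral equation \eqref{eq:radial-volterra}.

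\textbf{Step 3: compactness of the set of pairs.}
Take a sequence $(\tau_j,U_j)$ in the set.
\begin{itemize}
\item If $\tau_j\to\tau>0$, Steps 1--2 give a convergent subsequence. Positivity of the limit follows because it is a nonnegative maximizer, hence positive by \cref{lem:subcompact}.
\item If $\tau_j\to0$, the unweighted case $\beta=0$ of \cref{lem:limit} gives $U_j\to\phi$ in $H_0^1$. The sign is fixed because $U_j>0$, and the multipliers converge to $\nu_1$. The same elliptic bootstrap then upgrades this to $C^1$ convergence.
\end{itemize}
Closedness follows from the same limits.

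\textbf{Expected main obstacle.} The delicate point is the critical endpoint $\tau=4\pi$ with the exponent varying. One must check that the concentration level bound is uniform along a sequence with $\tau_j\uparrow4\pi$. Monotonicity of $J_{\tau,0}$ in $\tau$ reduces this to the fixed critical case.
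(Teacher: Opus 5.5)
Your proposal is correct and follows essentially the paper's argument: convergence of $S_{\tau_j,0}$ plus Vitali for $\tau<4\pi$, reduction to the fixed critical problem at $\tau=4\pi$, elliptic $W^{2,p}$ estimates for the $C^1$ convergence and the multipliers, and \cref{lem:limit} with $\beta=0$ as $\tau_j\to0$. At $\tau=4\pi$ the paper just uses $J_{4\pi,0}(U_j)\ge J_{\tau_j,0}(U_j)\to S_{4\pi,0}$ to see that $(U_j)$ maximizes the fixed critical problem and applies \cref{lem:criticalcompact} verbatim, which makes your rerun of the rearrangement, concentration and splitting steps unnecessary (and removes the need for your provisional ``we may assume radial'' reduction, which you correctly repair afterwards).
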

\begin{proof}
For $\tau<4\pi$, choose $q>1$ and $\eta>0$ with
$q(\tau+\eta)<4\pi$ to obtain uniform integrability on the
unit ball as in \cref{lem:subcompact}.
After weak and almost everywhere convergence, Vitali's theorem gives
$J_{\tau_j,0}(U_j)\to J_{\tau,0}(U)$. Testing with a fixed maximizer at
$\tau$ gives $\liminf S_{\tau_j,0}\ge S_{\tau,0}$. Conversely, applying
the same weak compactness argument to maximizers for $S_{\tau_j,0}$
gives $\limsup S_{\tau_j,0}\le S_{\tau,0}$. Hence $U$ is a maximizer,
has norm one, and the convergence is strong.

If $\tau=4\pi$, monotonicity and a fixed critical maximizer $U_*$
give
\[
 J_{\tau_j,0}(U_*)\le S_{\tau_j,0}\le S_{4\pi,0},\qquad
 J_{\tau_j,0}(U_*)\longrightarrow S_{4\pi,0}.
\]
Since $J_{4\pi,0}(U_j)\ge J_{\tau_j,0}(U_j)\to S_{4\pi,0}$,
the sequence also maximizes the fixed critical problem, so
\cref{lem:criticalcompact} with $\beta=0$ gives strong convergence.

For maximizers the multiplier is
$\kappa_j=(\int_D U_j^2e^{\tau_jU_j^2}\,dx)^{-1}$.
Strong $H_0^1$ convergence bounds $e^{B U_j^2}$ uniformly in
every fixed finite $L^p$ space for each $B>0$, by approximating
the limit with a bounded smooth function and making the remainder
small as in \cref{lem:subcompact}.
H\"older's inequality and Vitali's theorem then give convergence of
the multiplier integrals and of $\kappa_j U_j e^{\tau_j U_j^2}$
in every finite $L^p(D)$, so elliptic estimates give convergence
in $W^{2,p}(D)$ and hence in $C^1(\overline D)$ for $p>2$.

For $\tau_j\downarrow0$, \cref{lem:limit} with $\beta=0$
gives strong convergence to the first eigenfunction and convergence
of the multipliers, which the same elliptic argument improves to
$C^1$ convergence. Taking a convergent parameter subsequence proves
compactness after including this eigenfunction at $\tau=0$.
\end{proof}

\begin{lemma}\label{lem:small-shoot}
The energy function $\mathcal E$ in \eqref{eq:shoot-energy} satisfies
\begin{equation}\label{eq:small-shoot}
 \mathcal E(b)=c_0 b^2+O(b^4),\qquad
 \mathcal E'(b)=2c_0 b+O(b^3),\qquad c_0>0,
\end{equation}
as $b\downarrow0$. In particular $\mathcal E'$ is positive near
zero and is not identically zero on $(0,\infty)$.
\end{lemma}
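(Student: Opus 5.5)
We expand the shooting solution for small central value. Set $y(t;b)=b\,Y(t;b)$. Then $Y$ solves
\[
 Y''+t^{-1}Y'+Ye^{b^2Y^2}=0,\qquad Y(0)=1,\quad Y'(0)=0,
\]
which depends analytically on the parameter $\epsilon=b^2$. The integral equation \eqref{eq:radial-volterra} extends analytically through $\epsilon=0$, where the equation becomes the Bessel equation.

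\textbf{Step 1: the limit problem.} At $\epsilon=0$ we have $Y_0(t)=J_0(t)$, with first zero $j_{0,1}$ and $J_0'(j_{0,1})=-J_1(j_{0,1})\ne0$. The implicit function theorem applied to $Y(R;\epsilon)=0$ then gives an analytic first zero
\[
 R(b)=\widetilde R(b^2),\qquad \widetilde R(0)=j_{0,1}.
\]
Uniform convergence on a neighborhood of $[0,j_{0,1}]$ rules out an earlier zero for small $b$.

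\textbf{Step 2: the energy.} Put $Z(s;\epsilon)=Y(\widetilde R(\epsilon)s;\epsilon)$, so that $z=bZ$. Then
\[
 \mathcal E(b)=b^2 G(b^2),\qquad G(\epsilon)=2\pi\int_0^1 s|Z'(s;\epsilon)|^2\,ds.
\]
Here $G$ is analytic near $0$, because $Z$ depends analytically on $\epsilon$ in $C^1([0,1])$ by the rescaled integral equation. Its value at zero is
\[
 c_0=G(0)=2\pi\int_0^1 s\,j_{0,1}^2 J_1(j_{0,1}s)^2\,ds>0,
\]
which in closed form equals $\pi j_{0,1}^2J_1(j_{0,1})^2$. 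Hence $\mathcal E(b)=c_0b^2+O(b^4)$. Differentiating the identity $\mathcal E(b)=b^2G(b^2)$ gives
\[
 \mathcal E'(b)=2bG(b^2)+2b^3G'(b^2)=2c_0b+O(b^3).
\]
So $\mathcal E'>0$ on some interval $(0,b_1)$, and in particular $\mathcal E'$ is not identically zero on $(0,\infty)$.

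\textbf{Main obstacle.} The only delicate point is justifying analyticity, or even $C^1$ dependence, in $\epsilon$ down to $\epsilon=0$, rather than only for $b>0$ as established before \cref{prop:shooting}. I would handle it by rerunning the contraction argument for
\[
 Y(t)=1-\int_0^t s\log(t/s)\,Y(s)e^{\epsilon Y(s)^2}\,ds .
\]
The nonlinearity is analytic jointly in $(\epsilon,Y)$, including at $\epsilon=0$. The remaining steps are as in the text: continuation past $j_{0,1}$ via the regular ODE, then the implicit function theorem for the zero.
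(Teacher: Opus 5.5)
Your proposal is correct and follows essentially the same route as the paper: you rescale $y=bY$, obtain analytic dependence on $b^2$ down to $b=0$ from the Volterra equation, use the simple zero $j_{0,1}$ of $J_0$ to make $R(b)$ analytic in $b^2$, and substitute into the energy. Your factorization $\mathcal E(b)=b^2G(b^2)$ with $G$ analytic is exactly the paper's justification for differentiating the remainder via analyticity of $R(b)$ and $z/b$ in $b^2$, and your closed form $c_0=\pi j_{0,1}^2J_1(j_{0,1})^2$ is a correct extra.
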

\begin{proof}
Set $q=y/b$. Its equation is
$q''+t^{-1}q'+q e^{b^2q^2}=0$ with
$q(0)=1$ and $q'(0)=0$. The integral form
\[
 q(t)=1-\int_0^t s\log(t/s)q(s)e^{b^2q(s)^2}\,ds
\]
gives analytic dependence on $b^2$, first locally by a contraction
argument and then on compact intervals by continuation.
At $b=0$, $q=J_0$, whose first positive zero $j_{0,1}$ is simple.
Thus $R(b)=j_{0,1}+O(b^2)$ and
$z(s)=b J_0(j_{0,1}s)+O(b^3)$ in $C^1([0,1])$.
Substitution into the energy gives \eqref{eq:small-shoot}, with
\[
 c_0=2\pi\int_0^1 s\left|\frac{d}{ds}J_0(j_{0,1}s)\right|^2ds>0.
\]
The maps $R(b)$ and $z/b$ are analytic in $b^2$, which also justifies
differentiating the remainder.
\end{proof}

\begin{proof}[Proof of \cref{thm:parameters}]
By \cref{lem:parameter-compact}, the central values
$b=\sqrt\tau\,U(0)$ associated with positive unweighted maximizers satisfy
$b\le M$ for a common $M<\infty$ and tend to zero as $\tau\downarrow0$.
By \cref{lem:small-shoot}, choose $d\in(0,M)$ with
$\mathcal E'>0$ on $(0,d)$, while analyticity and
$\mathcal E'\not\equiv0$ give finitely many zeros in $[d,M]$.
Define $\Sigma$ as the set of parameters $\tau\in(0,4\pi]$ for
which some positive maximizer satisfies
\[
 \mathcal E'(\sqrt\tau\,U(0))=0.
\]
The identity $\mathcal E(\sqrt\tau\,U(0))=\tau$ makes $\Sigma$
finite, and \cref{cor:critical,prop:shooting} exclude $4\pi$,
so $\Sigma\subset(0,4\pi)$.

For fixed $\tau>0$, the central values lie in a compact subset
of $(0,M]$, where $\mathcal E(b)-\tau$ has finitely many zeros
by analyticity and nonconstancy. Each central value determines the
positive radial solution, so $\M_{\tau,0}$ is finite.
The isometry in \cref{lem:transform} preserves radial constrained
forms, and \cref{prop:nonradial} excludes nonradial zero modes,
so the same $\Sigma$ works for every $\beta$.
At a degenerate maximizer, \cref{prop:shooting} makes every radial
linearized solution a multiple of $\partial_b z$, which is tangent
because $\mathcal E'(b)=0$, giving a one-dimensional full
constrained kernel.

Outside $a\Sigma$, \cref{thm:disk} gives quadratic stability.
For $\alpha\in a\Sigma$, choose a degenerate maximizer $v$ and
$0\ne h\in v^\perp$ in its kernel. Along $u_t=(v+th)/(1+t^2\norm{h}^2)^{1/2}$, the analytic
deficit has zero terms through order two and is nonnegative for
both signs of $t$, so its cubic term also vanishes and it is $O(t^4)$.
Finiteness of $\M_\alpha$ gives
$\dist(u_t,\M_\alpha)\sim |t|\norm{h}$, ruling out every exponent
$p<4$. This proves \textup{(i)} and \textup{(ii)}.

For \textup{(iii)}, we use the critical uniqueness and coercivity
in \cref{cor:critical}, obtained from \cite[Theorem~C]{LiSun}.
Consider
\[
 (\tau,U,\kappa)\longmapsto
 \left(U-\kappa(-\Delta)^{-1}(Ue^{\tau U^2}),\ \norm{U}^2-1\right)
\]
near the unique positive critical extremal
$(4\pi,u_*,\kappa_*)\in\R\times H_0^1(D)\times\R$.
The map is analytic by \cref{lem:chart}. Its derivative in
$(U,\kappa)$ is invertible: the constraint fixes the component
parallel to $u_*$, coercivity determines the tangent component,
and projection onto $u_*$ determines the multiplier variation.
The analytic implicit function theorem gives a family $U_\tau$
which, by \cref{lem:parameter-compact}, contains every positive
maximizer for $\tau\in[4\pi-\delta,4\pi]$ with $\delta>0$ small
and independent of $\beta$. This proves uniqueness there and
analyticity of $v_\alpha(r)=a^{-1/2}U_{\alpha/a}(r^a)$ in $H_0^1(D)$.

The $C^1$ continuity of $U_\tau$ gives operator-norm continuity
of the potentials, so orthogonal rotations between tangent spaces
transfer critical coercivity to a uniform radial bound after
decreasing $\delta$. The radial transformation preserves this bound.
For nonradial directions, continuity of $\kappa_\tau$ and the first
eigenvalue $\rho_1(\tau)$ in \eqref{eq:mode1}, together with the
Wronskian identity, gives
$\min_\tau(1-\kappa_\tau/\rho_1(\tau))>0$ on
$[4\pi-\delta,4\pi]$. Since $k/a\ge1$, this controls every
transformed angular mode and proves \eqref{eq:uniform-end-Q}.

For fixed $\beta$, Taylor's formula gives a uniform local
quadratic estimate. If the global estimate failed, some
$\alpha_j\to\bar\alpha$ and unit vectors $w_j$ would have
deficit-to-distance quotients tending to zero, hence vanishing
deficits because the distances are bounded, while continuity of
$v_\alpha$ gives $S_{\alpha_j}\to S_{\bar\alpha}$.
If $\bar\alpha<4\pi a$, choose $q>1$ with $q\alpha_j<4\pi a$
for large $j$ and pass to weak and almost everywhere convergence.
The weighted Vitali argument in \cref{lem:subcompact} then gives
$J_{\alpha_j}(w_j)\to J_{\bar\alpha}(w)$, making $w$ a maximizer
of norm one and giving strong convergence.
If $\bar\alpha=4\pi a$, monotonicity gives
$J_{4\pi a}(w_j)\ge J_{\alpha_j}(w_j)\to S_{4\pi a}$,
so \cref{lem:criticalcompact} gives the same conclusion.
Uniqueness and continuity of $v_\alpha$ then contradict the uniform
local estimate. Lemma~\ref{lem:ball} extends the bound to the closed unit
ball, proving \eqref{eq:uniform-end-stability}.
\end{proof}

\subsection{Comparison of singular weights}

We now display the weight parameter in $J_{\alpha,\beta}$,
$S_{\alpha,\beta}$, $\M_{\alpha,\beta}$ and the sharp constant
$C_{\alpha,\beta}\ge0$ from \eqref{eq:bestconstant}.
Its radial counterpart $C_{\alpha,\beta}^{\mathrm{rad}}$ restricts
the infimum to radial competitors, with distance to the same
maximizer set because all maximizers are radial.

The power transformation below is the radial map in
\cite[Lemma~10]{CR}, extended to arbitrary functions as in
\cite{DLL}. Keeping its angular energy term gives a comparison of
the stability constants.

\begin{proposition}\label{prop:weight-comparison}
For $0\le\beta<1$, set $a=1-\beta$ and $\tau=\alpha/a$.
For every $0<\alpha\le4\pi a$,
\begin{equation}\label{eq:weight-comparison}
 \frac{C_{\tau,0}}a\le C_{\alpha,\beta}
 \le C_{\alpha,\beta}^{\mathrm{rad}}
 =\frac{C_{\tau,0}^{\mathrm{rad}}}a.
\end{equation}
For fixed $0<\tau\le4\pi$ and $0\le\beta_1\le\beta_2<1$,
\begin{equation}\label{eq:weight-monotone}
 (1-\beta_1)C_{(1-\beta_1)\tau,\beta_1}
 \le(1-\beta_2)C_{(1-\beta_2)\tau,\beta_2}.
\end{equation}
Throughout the admissible range,
\begin{equation}\label{eq:weight-upper}
 C_{\alpha,\beta}\le K\frac{\alpha}{(1-\beta)^2},
 \qquad K=\frac{S_{4\pi,0}-\pi}{8\pi}.
\end{equation}
There are $\varepsilon,\delta,c>0$, independent of $\alpha,\beta$,
such that
\begin{equation}\label{eq:weight-scale}
 c\frac{\alpha}{(1-\beta)^2}\le C_{\alpha,\beta}
       \le K\frac{\alpha}{(1-\beta)^2}
\end{equation}
whenever $0<\tau\le\varepsilon$ or $4\pi-\delta\le\tau\le4\pi$.
\end{proposition}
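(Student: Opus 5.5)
The plan is to use a power transformation acting on arbitrary, possibly nonradial, functions, and to compare with the $\beta=0$ problem through \cref{lem:ball}. For $U\in H_0^1(D)$ written in polar coordinates, set
\[
 (\mathcal T U)(r,\theta)=a^{-1/2}\,U(r^a,\theta).
\]
Repeating the computation in \cref{lem:transform} while keeping the angular term gives, with $s=r^a$,
\[
 \norm{\mathcal T U}^2=\norm{U}^2+(a^{-2}-1)A(U),\qquad
 A(U)=\int_0^{2\pi}\!\!\int_0^1 s^{-1}|\partial_\theta U|^2\,ds\,d\theta .
\]
Moreover $J_{\alpha,\beta}(\mathcal T U)=a^{-1}J_{\tau,0}(U)$, and $\mathcal T$ is a linear bijection that restricts to an isometry on radial functions. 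By \cref{lem:transform} it maps $\M_{\tau,0}$ onto $\M_{\alpha,\beta}$, since all maximizers are radial, and $S_{\alpha,\beta}=S_{\tau,0}/a$.

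For the left inequality in \eqref{eq:weight-comparison}, take $\norm{u}\le1$ and put $U=\mathcal T^{-1}u$. Then $\norm{U}\le\norm{u}\le1$. The main obstacle is that $\mathcal T$ enlarges distances, so the distance bound for $U$ goes the wrong way. The extra term $1-\norm{U}^2$ in \eqref{eq:ball-remainder} compensates for this. Since $M_{\tau,0}$ is compact, \cref{lem:ball} gives
\[
 S_{\alpha,\beta}-J_{\alpha,\beta}(u)\ge\frac{C_{\tau,0}}{a}\bigl(\dist(U,\M_{\tau,0})^2+1-\norm{U}^2\bigr).
\]
Next, $1-\norm{U}^2\ge\norm{u}^2-\norm{U}^2=(a^{-2}-1)A(U)$. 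For a radial $V\in\M_{\tau,0}$ we have $A(U-V)=A(U)$, hence $\norm{U-V}^2+(a^{-2}-1)A(U)=\norm{\mathcal T(U-V)}^2=\norm{u-\mathcal TV}^2$. Minimizing over $V$ gives $\dist(u,\M_{\alpha,\beta})^2$ on the right, which proves $C_{\tau,0}/a\le C_{\alpha,\beta}$.

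The middle inequality holds because restricting the infimum can only increase it. The radial equality follows because $\mathcal T$ is a radial isometry that scales the deficit by $1/a$ and preserves distances to the maximizer set. For \eqref{eq:weight-monotone}, run the same argument with base weight $\beta_1$ instead of $0$, using the transformation $r\mapsto r^{a_2/a_1}$. Its angular factor $(a_1/a_2)^2\ge1$ again only increases energy, and the resulting inequality is $C_{\tau a_1,\beta_1}/(a_2/a_1)\le C_{\tau a_2,\beta_2}$, which rearranges to \eqref{eq:weight-monotone}.

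For the upper bound \eqref{eq:weight-upper}, the proof of \cref{lem:ball}, tested on an orthonormal sequence, gives $2C_{\alpha,\beta}\le S_{\alpha,\beta}-\pi/a=(S_{\tau,0}-\pi)/a$. The map $\tau\mapsto S_{\tau,0}$ is a supremum of convex functions of $\tau$ and equals $\pi$ at $\tau=0$. Convexity therefore gives $S_{\tau,0}-\pi\le\tau(S_{4\pi,0}-\pi)/(4\pi)$, which yields $C_{\alpha,\beta}\le K\tau/a=K\alpha/a^2$.

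For the lower half of \eqref{eq:weight-scale}, combine $C_{\alpha,\beta}\ge C_{\tau,0}/a$ with two estimates at $\beta=0$. For small $\tau$, \cref{thm:sharp} gives $C_{\tau,0}\ge c\tau$. Near $\tau=4\pi$, \cref{thm:parameters}(iii) with $\beta=0$ gives a uniform $c_0>0$, so $C_{\tau,0}\ge c_0\ge c_0\tau/(4\pi)$. In both ranges this gives $C_{\alpha,\beta}\ge c\tau/a=c\alpha/a^2$, with constants that are independent of $\beta$ because they come only from the unweighted problem.
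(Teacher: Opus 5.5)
Your proposal is correct and follows the paper's proof essentially step by step. It uses the same power transformation with the angular energy kept, the same term $1-\norm{U}^2$ from \cref{lem:ball} to absorb the extra angular energy in the distance identity, the same convexity bound for \eqref{eq:weight-upper}, and the same $\beta=0$ inputs from \cref{thm:sharp,thm:parameters} for the lower bound. The remaining differences are cosmetic: for \eqref{eq:weight-monotone} you transform directly from weight $\beta_1$ to weight $\beta_2$, which is the paper's comparison $E_{a_1}\le E_{a_2}$ conjugated by an isometry and lets you apply \cref{lem:ball} in the ordinary Dirichlet norm, and for the upper bound you test $C_{\alpha,\beta}$ on an arbitrary orthonormal sequence instead of bounding $C^{\mathrm{rad}}_{\tau,0}$ with a radial one.
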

\begin{proof}
For $u\in H_0^1(D)$, set
$U(s,\theta)=\sqrt a\,u(s^{1/a},\theta)$ and define
\[
 E_a(U)=\int_0^{2\pi}\int_0^1
          \left(|U_s|^2+\frac{|U_\theta|^2}{a^2s^2}\right)s\,ds\,d\theta.
\]
The change of variables $s=r^a$ gives
\begin{equation}\label{eq:angular-transform}
 \begin{aligned}
 E_a(U)&=\norm{u}^2,\qquad
       \norm{U}^2\le E_a(U)\le a^{-2}\norm{U}^2,\\
 J_{\alpha,\beta}(u)&=a^{-1}J_{\tau,0}(U),\qquad
 S_{\alpha,\beta}=a^{-1}S_{\tau,0}.
 \end{aligned}
\end{equation}
Direct differentiation gives the energy identities for smooth
functions, and density and equivalent norms extend the map and
its inverse to $H_0^1(D)$. Almost-everywhere convergence identifies
the extension with the stated substitution, so change of variables
gives the integral identity, while radial maximizers and the
isometry in \cref{lem:transform} give the supremum identity.

Under this map, $\M_{\alpha,\beta}$ corresponds exactly to
$\M_{\tau,0}$, whose elements $V$ are radial and have norm one,
so the cross term with $V$ is independent of the angular
coefficient in $E_a$ and
\begin{equation}\label{eq:distance-transform}
 \dist(u,\M_{\alpha,\beta})^2
 =\dist(U,\M_{\tau,0})^2+E_a(U)-\norm{U}^2.
\end{equation}
For $E_a(U)\le1$, Lemma~\ref{lem:ball} gives
\begin{align*}
 S_{\alpha,\beta}-J_{\alpha,\beta}(u)
 &=a^{-1}\bigl(S_{\tau,0}-J_{\tau,0}(U)\bigr)\\
 &\ge a^{-1}C_{\tau,0}
       \bigl(\dist(U,\M_{\tau,0})^2+1-\norm{U}^2\bigr)\\
 &\ge a^{-1}C_{\tau,0}\dist(u,\M_{\alpha,\beta})^2.
\end{align*}
The compactness needed here follows from \cref{thm:disk}, so this
proves the lower comparison. Restricting competitors
to radial functions gives the upper comparison and equality in
\eqref{eq:weight-comparison}.

For monotonicity, let $a_i=1-\beta_i$, so $a_2\le a_1$ and
$E_{a_1}\le E_{a_2}$. In both Hilbert norms the supremum of
$J_{\tau,0}$ is $S_{\tau,0}$ and its maximizer set is $\M_{\tau,0}$:
the sets $\{U:E_{a_i}(U)\le1\}$ lie in $\{U:\norm{U}\le1\}$
and contain all its radial maximizers. Write
$d_i(U)^2=\min_{V\in\M_{\tau,0}}E_{a_i}(U-V)$.
The sharp constant in the $E_{a_i}$ norm is
$a_iC_{a_i\tau,\beta_i}$ by \eqref{eq:angular-transform}.
For $E_{a_2}(U)\le1$, the common cross terms give
\[
 d_2(U)^2=d_1(U)^2+E_{a_2}(U)-E_{a_1}(U)
          \le d_1(U)^2+1-E_{a_1}(U).
\]
Lemma~\ref{lem:ball} in the $E_{a_1}$ norm makes
$a_1C_{a_1\tau,\beta_1}$ an admissible constant in the
$E_{a_2}$ norm, proving \eqref{eq:weight-monotone}.

For the upper bound, choose a radial orthonormal sequence,
whose squared distances to $\M_{\tau,0}$ tend to two by
compactness. Since $J_{\tau,0}\ge\pi$, this gives
$C_{\tau,0}^{\mathrm{rad}}\le(S_{\tau,0}-\pi)/2$.
Convexity of the exponential gives
\[
 S_{\tau,0}-\pi\le\frac{\tau}{4\pi}(S_{4\pi,0}-\pi).
\]
Together with \eqref{eq:weight-comparison}, this proves
\eqref{eq:weight-upper}. At $\beta=0$, \cref{thm:sharp} for
small $\tau$ and \cref{thm:parameters} on $[4\pi-\delta,4\pi]$
give $C_{\tau,0}\ge c\tau$ after reducing $c$, so
\eqref{eq:weight-comparison} yields \eqref{eq:weight-scale}.
\end{proof}

When $j_{0,2}<j_{1/(1-\beta),1}$, the radial attaining pair found
in Section~\ref{sec:stability} gives, for sufficiently small $\alpha$,
\[
 C_{\alpha,\beta}=C_{\alpha,\beta}^{\mathrm{rad}}
      =\frac{C_{\alpha/(1-\beta),0}^{\mathrm{rad}}}{1-\beta}.
\]
In particular, the monotonicity in \eqref{eq:weight-monotone}
can have equality for different weights.

\section*{Statements and Declarations}

\textbf{Funding.} The authors were supported by the National
Natural Science Foundation of China (Nos.~12201089, 12371051), the
Chongqing Normal University Foundation (No.~BWQB2023006), and the
Open Research Fund of the Key Laboratory of Nonlinear Analysis and
Applications, Ministry of Education, Central China Normal University
(No.~NAA2025ORG006).

\medskip
\textbf{Competing interests.} The authors declare no competing interests.

\medskip
\textbf{Data availability.} No datasets were generated or analysed
in this theoretical study.

\medskip
\textbf{Use of AI tools.} AI tools were used for literature searches,
language editing, and assistance in formulating and checking mathematical
arguments. The authors acknowledge the use of AI tools and take full
responsibility for the correctness and integrity of all proofs.

\raggedbottom
\raggedbottom
\end{document}